\documentclass[final]{siamltex}
\usepackage{graphicx}
\usepackage{color}
\usepackage{bm}
\usepackage{amsfonts}

\newfont{\bb}{msbm10}

\newcommand{\tr}{^{\sf T}}
\newcommand{\C}[1]{{\cal {#1}}}
\newcommand{\g}[1]{\mbox{\boldmath $#1$}}
\newcommand{\m}[1]{{\bf{#1}}}
\newcommand{\LAMBDA}{{\mbox{\boldmath $\lambda$}}}
\newcommand{\DELTA}{{\mbox{\boldmath $\delta$}}}

\renewcommand{\tilde}{\widetilde}
\renewcommand{\hat}{\widehat}
\renewcommand{\bar}{\overline}

\newtheorem{remark}{Remark}[section]

\begin{document}
\title{An Inexact Accelerated Stochastic ADMM for Separable Convex Optimization
\thanks{January 13, 2020;
revised July 6, 2020.
This research was partially supported by the 
Fundamental Research Funds for the Central Universities of China
under grant G2019KY05106, by the China Postdoctoral Science Foundation,
by the USA National Science Foundation under grants
1522629, 1522654, 1819002, and 1819161, and by the USA Office of Naval Research
under grant N00014-18-1-2100.}
}

\author{
    Jianchao Bai\thanks{{\tt jianchaobai@nwpu.edu.cn},
       School of Mathematics and Statistics
       and the MIIT Key Laboratory of Dynamics and Control of Complex Systems,
       Northwestern Polytechnical University, Xi'an 710072, China.}
\and
    William W. Hager\thanks{{\tt hager@ufl.edu},
        http://people.clas.ufl.edu/hager/,
        PO Box 118105,
        Department of Mathematics,
        University of Florida, Gainesville, FL 32611-8105.
        Phone (352) 294-2308. Fax (352) 392-8357.}
\and
    Hongchao Zhang\thanks{{\tt hozhang@math.lsu.edu},
        https://lsu,
        Department of Mathematics,
        Louisiana State University, Baton Rouge, LA 70803-4918.
        Phone (225) 578-1982. Fax (225) 578-4276.}
}
\maketitle

\begin{abstract}
An inexact accelerated stochastic Alternating Direction Method of Multipliers
(AS-ADMM) scheme is developed for solving structured separable
convex optimization problems with linear constraints.
The objective function is the sum of a possibly nonsmooth convex function
and a smooth function which is an average of many component convex functions.
Problems having this structure often arise in machine learning and 
data mining applications.
AS-ADMM combines the ideas of both ADMM and the stochastic
gradient methods using variance reduction techniques.
One of the ADMM subproblems employs a linearization technique while
a similar linearization could be introduced for the other subproblem.
For a specified choice of the algorithm parameters,
it is shown that the objective error and the constraint violation
are $\mathcal{O}(1/k)$ relative to the number of outer iterations $k$.
Under a strong convexity assumption, the expected iterate error converges
to zero linearly.
A linearized variant of AS-ADMM and incremental sampling strategies are
also discussed.
Numerical experiments with both stochastic and
deterministic ADMM algorithms show that AS-ADMM can be particularly
effective for structured optimization arising in big data applications.
\end{abstract}

\begin{keywords}
Convex optimization;  Separable structure; Accelerated Stochastic ADMM;
Inexact Stochastic ADMM;
AS-ADMM; Accelerated gradient method; Complexity; Big data
\end{keywords}

\begin{AMS}
65K10;  65Y20; 74S60; 90C25
\end{AMS}

\pagestyle{myheadings}
\thispagestyle{plain}
\markboth{J. BAI, W. W. HAGER, and H. ZHANG}
{INEXACT ACCELERATED STOCHASTIC ADMM}

\section{Introduction}
We  consider the following structured separable  convex optimization
problems  with linearly equality constraints:
\begin{equation} \label{P}
\min \{ f(\m{x}) + g(\m{y}) :
\m{x} \in \C{X}, \; \m{y} \in \C{Y}, \; A\m{x} + B\m{y} = \m{b} \} { , }
\end{equation}
where  $\mathcal{X} \subset \mathcal{R}^{n_1}$ and
$ \mathcal{Y} \subset \mathcal{R}^{n_2} $ are  closed  convex  subsets,
$g: \mathcal{Y} \to \mathcal{R} \cup \{+\infty\} $ is a convex,
but not necessarily smooth function, $A\in \mathcal{R}^{n \times n_1}$,
$B\in \mathcal{R}^{n \times n_2}$, and $\m{b} \in \mathcal{R}^{n}$ are  given,
and $f$ is an average of $N$ real-valued convex functions:
\[
f(\m{x})=\frac{1}{N}\sum\limits_{j=1}^{N}f_{j}(\m{x}).
\]
It is assumed that each $f_j$ is defined on an open set containing $\C{X}$
and that $f_j: \mathcal{X} \to \mathcal{R}$ is Lipschitz continuously
differentiable.
Problem (\ref{P}) corresponds to { the} regularized empirical risk minimization
in big data applications, including classification and regression models
in machine learning, where $N$ denotes the  sample size
and $f_{j} $ is the empirical loss.
A major difficulty in problems of the form (\ref{P})
is that $N$ can be very large, and hence, it would be
expensive to evaluate either $f$ or its gradient in each iteration.

The Lagrangian associated with (\ref{P}) is
\begin{equation}\label{Lagr}
\mathcal{L}\left(\m{x}, \m{y},\LAMBDA\right)
=f(\m{x})+g(\m{y})+\LAMBDA\tr
\left(\m{b} -  A\m{x}-B\m{y}\right),
\end{equation}
while the augmented Lagrangian with penalty $\beta > 0$ is
\begin{equation} \label{aug-Lagr}
\mathcal{L}_\beta\left( \m{x}, \m{y},\LAMBDA\right) =
\mathcal{L}\left( \m{x}, \m{y},\LAMBDA\right)+\frac{\beta}{2}
\left\| \m{b} - A\m{x}-B\m{y}\right\|^2.
\end{equation}
The Alternating Direction Method of Multipliers (ADMM) \cite{GM76, GM75}
is an effective approach to exploit the separable structure of the objective
function.
Assuming the existence of a solution to the first-order KKT optimality
system for (\ref{P}), Gabay \cite[pp.~316--322]{GA83}
shows that the following ADMM scheme
\begin{equation}  \label{gs-admm}
\left \{\begin{array}{lll}
\m{x}^{k+1}&\in&\arg\min\limits_{\m{x}\in\mathcal{X}} \mathcal{L}_\beta\left(\m{x},\m{y}^k,\LAMBDA^k\right),\\
\m{y}^{k+1}&\in&\arg\min\limits_{\m{y}\in\mathcal{Y}} \mathcal{L}_\beta\left(\m{x}^{k+1},\m{y},\LAMBDA^k\right),\\
\LAMBDA^{k+1}&=&
\LAMBDA^k+ \beta\left(\m{b} - A\m{x}^{k+1}-B\m{y}^{k+1}\right),
\end{array}\right.
\end{equation}
is a special case of the Douglas-Rachford splitting method
\cite{DouglasRachford1956, Eckstein93b}
applied to the stationary system for the dual of (\ref{P}).
ADMM  was proved convergent for the problem with two-block variables
\cite{GM76}, while the direct extension to more than two blocks
is not necessarily convergent \cite{chyy2016}, although its efficiency has
been observed in some applications \cite{HeYuan18, TaoYuan2011}.

ADMM and its variants have been extensively studied in the literature
and applied to a wide range of applications in signal and image processing,
and in statistical and machine learning. 
Here, we briefly review some of the ADMM literature.
Classes of ADMM-type methods include
proximal ADMM \cite{BaiZhang16,SunSunWang17},
inexact ADMM \cite{GuHeYang14,HagerZhang19, HagerZhang19b, NgWangYuan11}, and
linearized/relaxed ADMM \cite{XuWu11,YangYan2015}.
Most of these are globally convergent with an $\mathcal{O}(1/k)$
ergodic convergence rate, where $k$ denotes the iteration number.
Some improvements in the convergence rate of ADMM have been obtained including
\cite{DLPY2017} where the same $\mathcal{O}(1/k)$ convergence rate is obtained
in a multi-block setting with a Jacobi-proximal implementation.
For either a linear or a quadratic programming problem,
the classic ADMM scheme and its variant have
a linear convergence rate \cite{Boley13}.
Under the assumption that the subdifferential of each component
objective function is  piecewise linear, the global linear convergence of 
ADMM for two-block
separable convex optimization has been established in \cite{YangHan16}.
Assuming that an error bound condition holds and that the dual stepsize
is sufficiently small, Hong and Luo \cite{HongLuo2017} showed
an R-linear convergence rate of their multi-block ADMM.
Under the hypothesis that some of the underlying functions
are strongly convex, global linear convergence of ADMM-type algorithms
and their corresponding proximal/generalized versions
have been established
\cite{CaiHanYuan17,Goldstein14,HanSunZhang17,LinMaZhang15,PengZhang17}.

Notice that in standard deterministic ADMM for (\ref{P}),
gradient methods are often used to solve the subproblem involving $f$.
Hence, the gradient of $f$ needs to be evaluated at each iteration,
which requires the gradient of each component function $f_i$.
This could be expensive or impossible when $N$ is large in
big data applications.
Hence, ADMM type algorithms have been designed in recent years to solve
structured optimization problems of the form (\ref{P}) using
stochastic inexact gradients.
Research in the stochastic gradient ADMM area includes
\cite{AzadiSra14,LiuShangCheng17, Ouyang13, Suzuki13,
XieShanbhag20, ZhaoLiZhou15, ZhongKwok14}.

The algorithm analyzed in this paper is the inexact accelerated ADMM,
denoted AS-ADMM, given in Algorithm~\ref{algo1}.
Note that AS-ADMM contains a routine {\tt xsub} to generate an
approximation to the solution of the $\m{x}$-subproblem in (\ref{gs-admm}),
and two steps corresponds to updates $\m{y}^{k+1}$ and $\g{\lambda}^{k+1}$
in (\ref{gs-admm}).
The algorithm is inexact since the solution of the $\m{x}$-subproblem
is approximated in {\tt xsub}.
The algorithm is stochastic since in each step of {\tt xsub},
the gradient is computed at a randomly chosen component $f_j$ of $f$.
The outcome of AS-ADMM is stochastic since it depends on the randomly
chosen component $f_j$ where the gradient is evaluated.
The structure of AS-ADMM is somewhat typical of the structure for stochastic
gradient ADMM algorithms.

\renewcommand\figurename{Alg.}
\begin{figure}
\begin{center}
{ \tt
\begin{tabular}{l}
\hline
\\
{\bf \textcolor{blue}{Parameters:}}
$\beta>0$, $s \in (0, (1+\sqrt{5})/2 ]$ and $ \C{H} \succ \m{0}. $\\
{\bf \textcolor{blue}{Initialization:}}
$(\m{x}^0,\m{y}^0,\LAMBDA^0)$
$\in\mathcal{X}\times\mathcal{Y}\times \mathcal{R}^n$,
${\breve{\m{x}}^0=\m{x}^0}$.\\
\textbf{For}  $k=0,1,\ldots$ \\
\hspace*{.3in}Choose $M_k, \eta_k >0$ and
$ \C{M}_k $ such that $\C{M}_k - \beta A \tr A  \succeq \m{0}$.\\
\hspace*{.3in}$\m{h}^k :=$
$-A\tr \left[\LAMBDA^k-\beta(A\m{x}^k+B\m{y}^k-\m{b})\right]$. \\
\hspace*{.3in}$(\m{x}^{k+1}, \breve{\m{x}}^{k+1}) =$
{\bf \textcolor{blue}{xsub}} ($\m{x}^k, \breve{\m{x}}^k, \m{h}^k)$. \\
\hspace*{.3in}$\m{y}^{k+1} \in \arg\min \left\{ g(\m{y})+ \frac{\beta}{2}
\left\| A\m{x}^{k+1}+B\m{y}-\m{b}-\frac{\LAMBDA^k}{\beta}
\right\|^2 : \m{y} \in \C{Y} \right\}. $\\
\hspace*{.3in}$\LAMBDA^{k+1}=\LAMBDA^k-
s \beta\left(A\m{x}^{k+1}+B\m{y}^{k+1}-\m{b}\right).$\\
\textbf{end}\\
\hline
\\
$(\m{x}^+, \breve{\m{x}}^+) =$
{\bf \textcolor{blue}{xsub}} ($\m{x}_1$, $\breve{\m{x}}_1$, \m{h}).\\
\textbf{For} $t=1$, 2, $\ldots$, $M_k$ \\
\hspace*{.3in}Randomly select $\xi_t \in \{1, \; 2,\;  \ldots,\;  N\}$
with uniform probability.\\
\hspace*{.3in}$\beta_t = 2/(t+1)$, $\gamma_t = 2/(t\eta_k)$,
$\hat{\m{x}}_t = \beta_t \breve{\m{x}}_t + (1-\beta_t)\m{x}_t$. \\
\hspace*{.3in}$\m{d}_t = \hat{\m{g}}_t + \m{e}_t$, where $\hat{\m{g}}_t = \nabla f_{\xi_t} (\hat{\m{x}}_t)$ and $\m{e}_t$ is a random vector\\
 \hspace*{1.2in}satisfying $\mathbb{E}[\m{e}_t]=\m{0}$. \\
\hspace*{.3in}$\breve{\m{x}}_{t+1}=\arg\min
\left\{\langle \m{d}_t + \m{h},\m{x}\rangle+\frac{\gamma_t}{2}
\left\|\m{x}-\breve{\m{x}}_{t}\right\|_{ \C{H} }^2
+\frac{1}{2} \left\|\m{x}-\m{x}^k \right\|_{ \C{M}_k}^2 :
\m{x} \in \C{X} \right\}.$ \\
\hspace*{.3in}$\m{x}_{t+1}=$
$\beta_t \breve{\m{x}}_{t+1}+(1-\beta_t)\m{x}_{t}$.\\
\textbf{end}\\
\textbf{Return} $(\m{x}^+, \breve{\m{x}}^+) =
(\m{x}_{M_k+1}, \breve{\m{x}}_{M_k+1})$.\\
\hline
\end{tabular}
}
\end{center}
\caption{Accelerated Stochastic ADMM (AS-ADMM)}\label{algo1}
\end{figure}
\renewcommand\figurename{Fig.}

It seems that the first development of a stochastic gradient ADMM scheme
is given in \cite{Ouyang13}.
In the context of (P), the algorithm computes the gradient of a single
randomly chosen component $f_j$, and uses this gradient to linearize $f_j$
at the current iterate.
The solution of the linearized problem yields $\m{x}^{k+1}$.
If $\mathbb{E}$ denotes expectation, $(\m{x}^*,\m{y}^*)$ denotes
a solution of (\ref{P}), and $\C{X}$ is compact, then it is shown that
\begin{equation}\label{ouyang}
\mathbb{E} \big[ f(\bar{\m{x}}_k) + g(\bar{\m{x}}_k) - f(\m{x}^*)
- g(\m{x}^*) + \|A\bar{\m{x}}_k + B\bar{\m{y}}_k - \m{b}\| \big]
\le c/\sqrt{k},
\end{equation}
where the bar over an iterate means the average of the first $k$ iterates.
Without some additional information, such as
$f(\bar{\m{x}}_k) + g(\bar{\m{x}}_k) \ge f(\m{x}^*) + g(\m{x}^*)$,
this bound is not strong enough to ensure that the expected
objective value or constraint violation tend to zero.
In \cite{Suzuki13} the same algorithm is considered, but in the special
case that $B = -\m{I}$, where $\m{I}$ is the identity, and
$\C{A}\m{x} \in \C{Y}$ for all $\m{x} \in \C{X}$.
For this special case, $\bar{\m{y}}^k$ can be replaced by $A\bar{\m{x}}^k$
to obtain a feasible point, and (\ref{ouyang}) yields an $\C{O}(1/\sqrt{k})$
bound for the objective error.
In \cite{AzadiSra14} the error bound (\ref{ouyang}) is sharpened to
$\C{O}(1/k)$ by further developing the algorithm in \cite{Ouyang13}
by introducing a more complex averaging process and additional assumptions
such as both $\C{X}$ and $\C{Y}$ compact, and the dual multipliers are bounded.
Another variation of the method in \cite{Ouyang13} is given in
\cite{ZhongKwok14} with an error bound of $\C{O}(1/k)$.

The paper \cite{ZhaoLiZhou15} seems to be the first to realize the
potential benefit of solving the $\m{x}$-subproblem with greater accuracy.
Using $M_k = \C{O}(k^{2\varrho})$ inner iterations for the
$\m{x}$-subproblem with $\varrho > 1$,
an $\C{O}(1/k)$ bound was established for the left side
of (\ref{ouyang}).
The paper \cite{LiuShangCheng17} seems to represent the current
state-of-the-art for problems of the form (\ref{P}) with smooth $f_j$
and potentially nonsmooth $g$.
There were two fundamental innovations.
First, for their algorithm ASVRG-ADMM,
the $\m{x}$-subproblem takes advantage of both
a momentum acceleration trick from \cite{Tseng10} and variance reduction
techniques from \cite{Johnson2013} when performing a
fixed number $m$ inner iterations with
a fixed batch size for the stochastic gradients.
Second, in the analysis of ASVRG-ADMM, the authors exploit an observation
from \cite{ZhengKwok16} to obtain an $\C{O}(1/k)$ bound for both the
objective error and constraint violation.
%

In comparing AS-ADMM to the previous work,
the STOC-ADMM scheme proposed in \cite{Ouyang13}, and the various
modifications of it, use one stochastic gradient step in each ADMM
iteration to approximately solve
the $\m{x}$-subproblem, while the
scheme ASVRG-ADMM proposed in \cite{LiuShangCheng17} uses
a fixed number $m$ inner iterations.
In contrast, our AS-ADMM uses an adaptive $M_k$
accelerated stochastic gradient iterations to solve the $\m{x}$-subproblem
with increasing accuracy as the iterations progress.
We found this strategy particularly effective in our earlier work
\cite{HagerZhang19} on an inexact, adaptive ADMM scheme.
The number of iterations is chosen so
as to achieve a convergence rate of either $\C{O}(1/k)$ or
$\C{O}(k^{-1} \log k)$, based on the theory in our paper.
In a specific adaptive scheme that we analyze,
$M_k = \C{O}(k^\varrho)$ with $\varrho \ge 1$.

In the ASVRG-ADMM scheme, the $\m{y}$-subproblem is solved at each
inner iteration; hence, in $k$ iterations, ASVRG-ADMM will solve the
$\m{y}$-subproblem $mk$ times.
In contrast, AS-ADMM
treats the $\m{y}$-subproblem as a single step in the outer iteration,
and it is only solved $k$ times during $k$ iterations.

Another fundamental difference between these schemes is that AS-ADMM
does not require an estimate for the Lipschitz constant of $\nabla f$,
while ASVRG-ADMM uses the Lipschitz constant within the algorithm,
as is typical in stochastic gradient techniques.
In ASVRG-ADMM the Lipschitz constant is used to compute the momentum
parameter which appears within the steps of the algorithm.
Hence, a poor estimate of the Lipschitz constant could significantly
effect the performance of ASVRG-ADMM and other stochastic ADMMs.
If a good estimate of either the local or global Lipschitz constant were known,
then it can be exploited in AS-ADMM, but it is not required in the algorithm.

A fundamental difference between the stochastic and
deterministic ADMM literature is that in the deterministic setting,
the literature typically establishes convergence of the iterates to
a stationary point for (\ref{P}), assuming the gradient of $f$ is Lipschitz
continuous.
The corresponding convergence results in the stochastic setting have not
yet been established; what is established is the convergence of the
expected objective error and constraint violation.
However, under strengthened assumptions, such as strong convexity,
convergence of the expected ergodic error as well as convergence of
the expected iterate error can be deduced (see Appendix).

A very recent paper \cite{XieShanbhag20} developed an inexact stochastic
gradient algorithm SI-ADMM for a different version of (\ref{P}), where not only
$f$ is viewed as stochastic, but also $g$.
To incorporate the setting of \cite{XieShanbhag20}
in (\ref{P}), one should also view $g$ as the sum of
component functions $g_j$, just like $f$.
The algorithm in \cite{XieShanbhag20} differs from our algorithm in that
SI-ADMM is based on gradient steps for the augmented Lagrangian and
proximal term, while AS-ADMM employs a linearization technique
described in item 3 below.
The assumptions in \cite{XieShanbhag20} imply that both
$f_j$ and $g_j$ are strongly convex and Lipschitz continuous,
that $\C{X} = \mathbb{R}^{n_1}$ and $\C{Y} = \mathbb{R}^{n_2}$, and
that the linear constraint in (\ref{P}) has full row rank.
In this very smooth and strongly convex setting,
a linear convergence rate for the expected error in the SI-ADMM iterates
is established.
In the Appendix of our paper, we also show that the expected error in the
AS-ADMM iterates converges to zero at a linear rate
when $f$ and $g$ are strongly convex.

In more detail, some features of AS-ADMM are the following:
\begin{enumerate}
\item
The memory cost of AS-ADMM is low since
the prior stochastic gradients and iterates are not saved,
which is advantageous in big data applications.
For a specific choice of $\eta_k$ and $M_k$ given in
(\ref{Sec33-12ba2}), we show in Theorem~\ref{coll-2} that the expectation of the
objective error and constraint
violation for an ergodic mean of the AS-ADMM iterates is $\C{O}(1/k)$.
The Appendix introduces additional assumptions to obtain results
concerning the convergence of the expected error in the iterates.
For example, when $\C{M}_k - \beta A\tr A$ is uniformly positive definite,
then the iterates are bounded in expectation, and when $f$ and $g$ are
strongly convex, the expected error in the iterates converges linearly to zero.

\item
Although the AS-ADMM algorithm does not require knowledge of the Lipschitz
constant for the gradient of $f$, faster convergence
may be possible when a good estimate of the Lipschitz constant $\nu$
for $\nabla f_j$, $1 \le j \le N$, is known and exploited.
In particular, the convergence results apply when $\eta_k$ reaches the
interval $(0, 1/\nu)$;
for the choice of $\eta_k$ given in (\ref{Sec33-12ba2}), $\eta_k$ tends to
zero, so it eventually lies in the interval where convergence is guaranteed.
But if the Lipschitz constant is known, we could always take 
$\eta_k \in (0, 1/\nu)$ and the convergence rates would be valid from the
start of the iterations.
\item
The routine {\tt xsub} is obtained from the deterministic inexact ADMM scheme
in \cite{HagerZhang19b} by replacing the full gradient by a
stochastic gradient.
In the deterministic setting, it is shown in \cite{HagerZhang19b}
(see Lemma 3.1 and the parameter choice (2.4) in \cite{HagerZhang19b})
that this inexact ADMM is an accelerated scheme for solving the problem
\begin{equation}\label{real-xsub}
\arg\min\limits_{\m{x}\in\mathcal{X}}
\mathcal{L}_\beta\left(\m{x},\m{y}^k,\LAMBDA^k\right)
+ \frac{1}{2}\|\m{x} - \m{x}^k\|_{\C{D}_k}^2, \quad
\C{D}_k = \C{M}_k - \beta A \tr A.
\end{equation}

Note that both the objective function and the penalty term of
(\ref{real-xsub}) are linearized to some degree in the optimization
problem contained in {\tt xsub}.
The objective function of (\ref{real-xsub}) is linearized by replacing
the objective $f$ by $\nabla f_j$ for some $j$, while the penalty term is
partly linearized by including a proximal term of the form
$(1/2)\|\m{x} - \m{x}^k\|_{\C{M}_k - \beta A \tr A}^2$ in (\ref{real-xsub}).
This proximal term annihilates $(\beta/2)\|A\m{x}\|^2$ in the penalty term.
If $\C{M}_k$ and $\C{H}$ in {\tt xsub} were a multiple of the identity,
then the Hessian of the objective for the optimization problem in
{\tt xsub} would be a multiple of the identity.
The constraint $\C{M}_k - \beta A\tr A \succeq \m{0}$ in AS-ADMM arises from
the proximal term in (\ref{real-xsub}).
\item
AS-ADMM allows for variance reduction techniques.
In each iteration of {\tt xsub}, a stochastic
gradient $\hat{\m{g}}_t$ of the function $f$ at $\hat{\m{x}}_t$ is generated,
and the user has the flexibility of choosing a zero mean random vector
$\m{e}_t$ to reduce the variance of $\hat{\m{g}}_t$.
A trivial choice is $\m{e}_t = \m{0}$; however, faster convergence is
observed in the numerical experiments when a variance reduction technique
is employed.
\item
In the standard deterministic Gauss-Seidel version of ADMM, a dual step
$s \in (0, (1+\sqrt{5})/2)$ (the open interval) is used.
In the stochastic AS-ADMM, the stepsize constraint is
$s \in (0, (1+\sqrt{5})/2]$ (the half-open interval)
since we only show convergence of the function values.
If $M_k=1$ and $N = 1$,
then AS-ADMM becomes the standard linearized ADMM.
If $M_k > 1$ and $N = 1$,
then AS-ADMM is a deterministic inexact ADMM,
where the  $\m{x}$-subproblems of ADMM are solved inexactly using $M_k$
accelerated gradient iterations.
Hence, our convergence results for AS-ADMM also imply convergence results
for an inexact deterministic ADMM based on
$M_k$ accelerated gradient iterations.
Similar to the Gauss-Seidel version of ADMM,
$s \in (0, (1+\sqrt{5})/2)$ guarantees convergence of the iterates
for this inexact deterministic ADMM, a result not previously known in the
literature.
In fact, the more general multi-block convergence results in
\cite{HagerZhang19, HagerZhang19b} require that $s \in (0, 1)$.
\item
As shown in the analysis, the constraint in AS-ADMM that
$\C{D}_k = \C{M}_k - \beta A \tr A$
is positive semidefinite can be weakened to
$(\m{x}^{k+1} - \m{x}^k)\tr \C{D}_k (\m{x}^{k+1} - \m{x}^k)\ge 0$ for all $k$
sufficiently large.
In Remark~\ref{remxx}, we show that when $\C{M}_k = \rho_k \m{I}$,
there is an easy and effective way to adjust $\rho_k$ during the iterations,
based on an underestimate of the largest eigenvalue of $\beta A \tr A$,
so as to satisfy the weakened constraint on $\C{D}_k$ when $k$ is
sufficiently large.
\item
Our numerical experiments show that AS-ADMM performs much better
than deterministic ADMM methods for solving problem (\ref{P})
when it is expensive to compute the exact gradient of $f$,
and it is competitive or faster than other
state-of-the-art stochastic ADMM type algorithms
\cite{GM75,LiuShangCheng17,Ouyang13,OuyangLan15},
especially when the linear constraints are not simple.
\end{enumerate}

The paper is organized as follows.
Section~\ref{Sepre} introduces some notation and assumptions.
Detailed convergence analysis of AS-ADMM is given in Sections~\ref{VI} and
\ref{sec322-key}.
Incremental sampling strategies
and a linearized variant of AS-ADMM are also briefly discussed in
Sections~\ref{extension:incremental} and \ref{extension:linearized}.
Numerical experiments comparing AS-ADMM with both deterministic and
stochastic ADMM type algorithms are given in Section~\ref{sec-experiments}.
The Appendix develops properties for the expected iterates under stronger
assumptions.
In particular, the AS-ADMM iterates are bounded in expectation when
the proximal term is uniformly positive definite,
while the expected error in the iterates converges to zero at a linear
rate under a strong convexity assumption.

\section{Notation and Assumptions}\label{Sepre}
Let  $\mathcal{R}$, $\mathcal{R}^n$, and $\mathcal{R}^{n\times m}$
be the sets of  real numbers,  $n$ dimensional real column vectors,
and $n\times m$ real matrices, respectively.
Let $\m{I}$ denote the identity matrix and $\m{0}$ denote zero matrix/vector.
For symmetric matrices $A$ and $B$ of the same dimension, $A \succ B$
($A \succeq B$) means $A - B$ is a positive definite (semidefinite) matrix.
For any symmetric matrix $G$, $\|\m{x}\|_G^2 := \m{x} \tr G \m{x}$,
where the superscript $\tr$ denotes the transpose.
Note that $G$ could be indefinite with $\m{x} \tr G \m{x} < 0$ for some $\m{x}$.
If $G$ is positive definite, then $\|\m{x}\|_G$ is a norm.
We use $\|\cdot\|$ and $\langle \cdot,\cdot\rangle$
to denote the standard Euclidean norm and inner product;
$\nabla f(x)$ is the gradient of $f$ at $\m{x}$.
For convenience in the analysis, we define
\begin{equation}\label{vector-w-v}
\m{w}=\left(\begin{array}{c}
 \m{x}\\\m{y}\\ \LAMBDA
\end{array}\right)\quad \mbox{and} \quad
\mathcal{J}(\m{w})=\left(\begin{array}{c}
  -A\tr \LAMBDA\\ -B\tr\LAMBDA\\  A\m{x}+B\m{y}-\m{b}
\end{array}\right).
\end{equation}
We also define $F(\m{w}) = f(\m{x}) + g(\m{y})$ and
$\m{w}^k = [\m{x}^k \;,\; \m{y}^k \;,\; \g{\lambda}^k]$.
The affine map $\mathcal{J}(\cdot)$ is skew-symmetric in the sense that
\begin{eqnarray} \label{Sec1-J}
\left(\m{w}-\m{v}\right)\tr
\left[\mathcal{J}(\m{w})-\mathcal{J}(\m{v})\right]=0
\end{eqnarray}
for all $\m{v}$ and
$\m{w}\in \mathbb{R}^{n_1} \times \mathbb{R}^{n_2} \times \mathbb{R}^n$.
In other words, the matrix associated with $\C{J}$ is skew symmetric.

The point $\m{w}^*:=(\m{x}^*,\m{y}^*,\LAMBDA^*)\in \Omega :=$
$\mathcal{X}\times\mathcal{Y}\times \mathcal{R}^n$
is a saddle-point of the Lagrangian $\mathcal{L}$, given in (\ref{Lagr}), if
\[
\mathcal{L}\left(\m{x}^*,\m{y}^*,\LAMBDA\right)
\leq\mathcal{L}\left(\m{x}^*,\m{y}^*,\LAMBDA^*\right)
\leq \mathcal{L}\left(\m{x},\m{y},\LAMBDA^*\right)
\]
for every $\m{w} = (\m{x}, \m{y}, \LAMBDA) \in \Omega$.
It follows that
\[
\begin{array}{lllll}
f(\m{x})- f(\m{x}^*) &+&
(\m{x}-\m{x}^*)\tr \left(-A\tr \LAMBDA^*\right)&\geq&  0, \\
 g(\m{y})- g(\m{y}^*) &+&
(\m{y}-\m{y}^*)\tr \left(- B\tr\LAMBDA^*\right)&\geq& 0, \\
&&A\m{x}^*+B\m{y}^*- \m{b} &=&  \m{0}.
\end{array}
\]
These inequalities are equivalent to the variational inequality
\begin{equation}\label{Sec3-1}
\quad F(\m{w})- F(\m{w}^*) +(\m{w} -\m{w}^*)\tr \mathcal{J}(\m{w}^*) \geq  0
\end{equation}
for all $\m{w}\in \Omega$. 
Note that $\m{w}^*$ satisfies (\ref{Sec3-1}) if and only if
$\m{w}^*$ is a primal-dual solution of problem (\ref{P}).
Let $\C{W}^*$ denote the set
of $\m{w}^* \in \Omega$ satisfying (\ref{Sec3-1}).

Throughout the paper, we make the following assumptions:
\smallskip
\begin{itemize}
\item[(a1)]
The primal-dual solution set $\C{W}^*$ of the problem $(\ref{P})$ is nonempty.
\item[(a2)]
The problem
\[
\min \left\{ g(\m{y}) + (\beta/2) \m{y}\tr \m{B}\tr\m{B}\m{y} +
\m{z}\tr\m{y} : \m{y} \in \C{Y} \right\}
\]
has a minimizer for any $\m{z} \in \mathbb{R}^{n_2}$.
\item[(a3)]
For some $\nu > 0$ and $\C{H} \succ \m{0}$,
the gradients $\nabla f_j$ satisfy the Lipschitz condition
\begin{equation}\label{g-lipschitz}
\| \nabla f_j (\m{x}_1) - \nabla f_j (\m{x}_2)
\|_{\C{H}^{-1}} \le \nu \|\m{x}_1 - \m{x}_2\|_{\C{H}}
\end{equation}
for every $\m{x}_1, \m{x}_2 \in \C{X}$ and $j = 1, 2, \ldots, N$.
\end{itemize}
\smallskip

By a Taylor expansion, (a3) implies that $f$ is
$\nu$-bounded in the following sense:
\begin{equation}\label{nv-smooth}
f(\m{x}_1)\leq f(\m{x}_2)+\langle \nabla f(\m{x}_2), \m{x}_1 - \m{x}_2 \rangle +
\frac{\nu}{2}\| \m{x}_1-\m{x}_2\|_{\C{H}}^2
\end{equation}
for every $\m{x}_1, \m{x}_2 \in \C{X}$.

\section{Variational Characterization\label{VI}}
The following lemma shows a key recursive property of 
the iterates $\{\m{x}_t \}$ generated by {\tt xsub}.
Note that $\phi_k$ below is the objective function for (\ref{real-xsub}),
which {\tt xsub} is minimizing.
\begin{lemma} \label{Sec31-302}
Let us define $\Gamma_t= 2/(t(t+1))$ and
\begin{equation}\label{phi-psi-def}
\phi_k(\m{x})=
f(\m{x}) + \psi_k( \m{x}), \quad \mbox{ where } \quad
\psi_k(\m{x}) =
\frac{1}{2}\left\|\m{x}-\m{x}^k\right\|_{\C{M}_k}^2 +
\langle \m{h}^k,\m{x}\rangle,
\end{equation}
and $\m{h}^k =$
$-A\tr \left[\LAMBDA^k-\beta(A\m{x}^k+B\m{y}^k-\m{b})\right]$.
Then, for any $\m{x} \in \mathcal{X}$ and $k$ with $\eta_k \in (0, 1/\nu)$,
we have
\begin{equation} \label{1-sec1-5}
\frac{1}{\Gamma_t}\left[\phi_k({\m{x}}_{t+1})
-\phi_k(\m{x})\right]\leq\left \{\begin{array}{lll}
\theta_1, &&t=1,\\
\frac{1}{\Gamma_{t-1}}\left[\phi_k({\m{x}}_{t})
- \phi_k(\m{x})\right]+\theta_t,&& t\geq 2,
\end{array}\right.
\end{equation}
where
\begin{eqnarray}
\theta_t &=&
\frac{1}{\eta_k}\left[\left\|\m{x}-\breve{\m{x}}_{t}\right\|_{\C{H}}^2
-\left\|\m{x}
-\breve{\m{x}}_{t+1}\right\|_{\C{H}}^2\right]
-\frac{t}{2}\left\|\m{x}-\breve{\m{x}}_{t+1}\right\|_{\C{M}_k}^2
\label{theta_t}\\
&& \quad \quad +t\langle \DELTA_t, \breve{\m{x}}_{t}-\m{x}\rangle
+\frac{\eta_k t^2}{4}\frac{\left\|\DELTA_t\right\|_{\C{H}^{-1}}^2}
{(1-\eta_k\nu)},\quad t\geq 1,\;\; and \nonumber  \\
\DELTA_t&=&\nabla f(\hat{\m{x}}_{t})-\m{d}_t. \label{1-sec1-05}
\end{eqnarray}
\end{lemma}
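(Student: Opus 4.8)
The plan is to run a standard accelerated (Tseng-type) argument on the inner loop of {\tt xsub}, handling the smooth part $f$ through the stochastic surrogate $\m{d}_t$ and treating the quadratic-plus-linear piece $\psi_k$ exactly, and then to divide the resulting one-step estimate by $\Gamma_t$ and exploit the algebraic identities $\beta_t = t\Gamma_t$ and $(1-\beta_t)/\Gamma_t = 1/\Gamma_{t-1}$ (valid for $t\ge 2$, while $\beta_1=1$) to convert it into the claimed recursion. Throughout, the comparison point is the fixed $\m{x}\in\C{X}$, and all iterates stay in $\C{X}$ because they are convex combinations of points of $\C{X}$.

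First I would bound $f(\m{x}_{t+1})$ using the $\nu$-smoothness inequality (\ref{nv-smooth}) at the anchor $\hat{\m{x}}_t$. From the two averaging steps $\hat{\m{x}}_t=\beta_t\breve{\m{x}}_t+(1-\beta_t)\m{x}_t$ and $\m{x}_{t+1}=\beta_t\breve{\m{x}}_{t+1}+(1-\beta_t)\m{x}_t$ one has $\m{x}_{t+1}-\hat{\m{x}}_t=\beta_t(\breve{\m{x}}_{t+1}-\breve{\m{x}}_t)$, so the linear term is $\beta_t\langle\nabla f(\hat{\m{x}}_t),\breve{\m{x}}_{t+1}-\breve{\m{x}}_t\rangle$ and the quadratic remainder is $\frac{\nu}{2}\beta_t^2\|\breve{\m{x}}_{t+1}-\breve{\m{x}}_t\|_{\C{H}}^2$. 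Writing $\breve{\m{x}}_{t+1}-\breve{\m{x}}_t=(\breve{\m{x}}_{t+1}-\m{x})+(\m{x}-\breve{\m{x}}_t)$ and using the identity $\beta_t(\m{x}-\breve{\m{x}}_t)=\beta_t(\m{x}-\hat{\m{x}}_t)+(1-\beta_t)(\m{x}_t-\hat{\m{x}}_t)$, the convexity of $f$ turns the $\m{x}-\hat{\m{x}}_t$ and $\m{x}_t-\hat{\m{x}}_t$ contributions into $\beta_t f(\m{x})+(1-\beta_t)f(\m{x}_t)-f(\hat{\m{x}}_t)$, and the leading $f(\hat{\m{x}}_t)$ cancels.

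Next I would deal with $\psi_k$. Since $\C{M}_k\succeq\beta A\tr A\succeq\m{0}$, $\psi_k$ is convex, so Jensen's inequality on $\m{x}_{t+1}=\beta_t\breve{\m{x}}_{t+1}+(1-\beta_t)\m{x}_t$ gives $\psi_k(\m{x}_{t+1})\le\beta_t\psi_k(\breve{\m{x}}_{t+1})+(1-\beta_t)\psi_k(\m{x}_t)$; adding this to the $f$-bound collects the $(1-\beta_t)$-terms into $(1-\beta_t)\phi_k(\m{x}_t)$. To process $\beta_t\langle\nabla f(\hat{\m{x}}_t),\breve{\m{x}}_{t+1}-\m{x}\rangle+\beta_t\psi_k(\breve{\m{x}}_{t+1})$ I would substitute $\nabla f(\hat{\m{x}}_t)=\m{d}_t+\DELTA_t$ from (\ref{1-sec1-05}) and invoke the first-order optimality condition of the $\breve{\m{x}}_{t+1}$-subproblem, expanding its two prox terms with the three-point identity $\langle G(\m{a}-\m{b}),\m{c}-\m{a}\rangle=\frac{1}{2}(\|\m{c}-\m{b}\|_G^2-\|\m{c}-\m{a}\|_G^2-\|\m{a}-\m{b}\|_G^2)$ for $G=\gamma_t\C{H}$ and $G=\C{M}_k$. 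Because $\m{h}=\m{h}^k$, the linear terms $\langle\m{h}^k,\breve{\m{x}}_{t+1}\rangle$ cancel and the surviving $\C{M}_k$-quadratic and linear pieces reassemble into $\beta_t\psi_k(\m{x})$; together with $\beta_t f(\m{x})$ this yields $\beta_t\phi_k(\m{x})$, leaving
\[
\phi_k(\m{x}_{t+1})-\phi_k(\m{x})\le(1-\beta_t)\left[\phi_k(\m{x}_t)-\phi_k(\m{x})\right]+\beta_t\langle\DELTA_t,\breve{\m{x}}_{t+1}-\m{x}\rangle+c_t\left\|\breve{\m{x}}_{t+1}-\breve{\m{x}}_t\right\|_{\C{H}}^2-\frac{\beta_t}{2}\left\|\m{x}-\breve{\m{x}}_{t+1}\right\|_{\C{M}_k}^2+\frac{\beta_t\gamma_t}{2}\left(\left\|\m{x}-\breve{\m{x}}_t\right\|_{\C{H}}^2-\left\|\m{x}-\breve{\m{x}}_{t+1}\right\|_{\C{H}}^2\right),
\]
where $c_t=\frac{\nu\beta_t^2}{2}-\frac{\beta_t\gamma_t}{2}$.

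The hard part will be the final treatment of the noise term, where the hypothesis $\eta_k\in(0,1/\nu)$ and the specific parameter choices enter. I would split $\breve{\m{x}}_{t+1}-\m{x}=(\breve{\m{x}}_{t+1}-\breve{\m{x}}_t)+(\breve{\m{x}}_t-\m{x})$ so the desired $\beta_t\langle\DELTA_t,\breve{\m{x}}_t-\m{x}\rangle$ appears, and bound the leftover $\beta_t\langle\DELTA_t,\breve{\m{x}}_{t+1}-\breve{\m{x}}_t\rangle$ by the weighted Young inequality $\le\frac{\beta_t a}{2}\|\DELTA_t\|_{\C{H}^{-1}}^2+\frac{\beta_t}{2a}\|\breve{\m{x}}_{t+1}-\breve{\m{x}}_t\|_{\C{H}}^2$ with $a=\eta_k t/(2(1-\eta_k\nu))$, which requires $\eta_k<1/\nu$ to keep $a>0$ and the bound finite. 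The coefficient of $\|\breve{\m{x}}_{t+1}-\breve{\m{x}}_t\|_{\C{H}}^2$ then becomes $c_t+\frac{\beta_t}{2a}=\frac{\beta_t}{2}(\nu\beta_t-\gamma_t+\frac{1}{a})$, and the required inequality $1/a\le\gamma_t-\nu\beta_t$ reduces, upon inserting $\beta_t=2/(t+1)$ and $\gamma_t=2/(t\eta_k)$, to $1/t\ge 1/(t+1)$, which holds; hence this coefficient is nonpositive and the $\C{H}$-squared term can be dropped. Finally, dividing by $\Gamma_t$ and using $\beta_t/\Gamma_t=t$, $\beta_t\gamma_t/(2\Gamma_t)=1/\eta_k$, and $\beta_t a/(2\Gamma_t)=\eta_k t^2/(4(1-\eta_k\nu))$ turns the right side into $\frac{1-\beta_t}{\Gamma_t}[\phi_k(\m{x}_t)-\phi_k(\m{x})]+\theta_t$, with $\theta_t$ exactly as in (\ref{theta_t}); since $(1-\beta_t)/\Gamma_t=1/\Gamma_{t-1}$ for $t\ge2$ while $\beta_1=1$ annihilates this term at $t=1$, the two cases of (\ref{1-sec1-5}) follow.
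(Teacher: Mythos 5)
Your proposal is correct and follows essentially the same route as the paper's proof: the $\nu$-smoothness bound for $f$ at the extrapolated point $\hat{\m{x}}_t$ combined with convexity of $f$ and of $\psi_k$, the first-order optimality of the $\breve{\m{x}}_{t+1}$-subproblem, the split of the noise term into $\langle \DELTA_t, \breve{\m{x}}_t-\m{x}\rangle$ plus a cross term handled by Young's inequality (which is exactly where $\eta_k \in (0,1/\nu)$ enters), and finally division by $\Gamma_t$ with the identities $\beta_t/\Gamma_t = t$ and $(1-\beta_t)/\Gamma_t = 1/\Gamma_{t-1}$. The only cosmetic deviations are that you invoke the three-point identity where the paper expands the quadratic subproblem objective directly, and that you pre-select the Young weight $a = \eta_k t/(2(1-\eta_k\nu))$ and verify nonpositivity of the resulting $\|\breve{\m{x}}_{t+1}-\breve{\m{x}}_t\|_{\C{H}}^2$ coefficient, whereas the paper cancels that coefficient exactly and then relaxes via $1-\nu\beta_t/\gamma_t > 1-\eta_k\nu$; both bookkeepings produce the identical $\theta_t$.
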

\begin{proof}
By the updates of ${\m{x}}_{t+1}$ and $\hat{\m{x}}_{t}$, we have
\begin{equation}\label{*}
\beta_t(\breve{\m{x}}_{t+1}-\hat{\m{x}}_{t})
+(1-\beta_t)({\m{x}}_{t}-\hat{\m{x}}_{t}) =
{\m{x}}_{t+1}- \hat{\m{x}}_{t}=
\beta_t \m{s}_t , \quad
\m{s}_t = \breve{\m{x}}_{t+1} - \breve{\m{x}}_t.
\end{equation}
Since $f$ is $\nu$-bounded (\ref{nv-smooth}), the following relations hold
due to  (\ref{*}) and the convexity of $f$:
\begin{eqnarray}\label{1-sec1-3}
f({\m{x}}_{t+1})&\leq&
f(\hat{\m{x}}_{t})+
\left\langle \nabla f(\hat{\m{x}}_{t}),{\m{x}}_{t+1}
-\hat{\m{x}}_{t}\right\rangle
+\frac{\nu}{2}\left\|{\m{x}}_{t+1}
-\hat{\m{x}}_{t}\right\|_{\C{H}}^2\nonumber\\
&=& f(\hat{\m{x}}_{t})
+\left\langle \nabla f(\hat{\m{x}}_{t}),
\beta_t(\breve{\m{x}}_{t+1}
-\hat{\m{x}}_{t})+(1-\beta_t)
({\m{x}}_{t}-\hat{\m{x}}_{t})\right\rangle
+\frac{\nu\beta_t^2}{2}\left\|\m{s}_t\right\|_{\C{H}}^2\nonumber\\
&=&(1-\beta_t)\left[f(\hat{\m{x}}_{t})
+\left\langle \nabla f(\hat{\m{x}}_{t}),{\m{x}}_{t}
-\hat{\m{x}}_{t}\right\rangle\right]+\beta_t R_f
+\frac{\nu\beta_t^2}{2}\left\|\m{s}_t\right\|_{\C{H}}^2\nonumber\\
&\leq &(1-\beta_t)f({\m{x}}_{t})+  \beta_t R_f
+\frac{\nu\beta_t^2}{2}\left\|\m{s}_t\right\|_{\C{H}}^2,
 \end{eqnarray}
where $R_f=
f(\hat{\m{x}}_{t})+
\left\langle \nabla f(\hat{\m{x}}_{t}),\breve{\m{x}}_{t+1}
-\hat{\m{x}}_{t}\right\rangle$.
For any $\m{x} \in \C{X}$, it again follows from the convexity of $f$ that
\begin{eqnarray}\label{1-sec1-4}
R_f&=& f(\hat{\m{x}}_{t})
+\left\langle \nabla f(\hat{\m{x}}_{t}),\m{x}
-\hat{\m{x}}_{t}\right\rangle
+ \left\langle \nabla f(\hat{\m{x}}_{t}),\breve{\m{x}}_{t+1}
-\m{x}\right\rangle\nonumber\\
&\leq&  f(\m{x}) +
\left\langle \nabla f(\hat{\m{x}}_{t}),\breve{\m{x}}_{t+1}
-\m{x}\right\rangle.
 \end{eqnarray}

By the update formula ${\m{x}}_{t+1} =$
$\beta_t \breve{\m{x}}_{t+1} + (1-\beta_t) {\m{x}}_t$
and the convexity of $\psi_k$, we have
\begin{equation}\label{psi-convex}
\psi_k({\m{x}}_{t+1}) \le
\beta_t \psi_k(\breve{\m{x}}_{t+1} )
+(1-\beta_t) \psi_k({\m{x}}_{t}).
\end{equation}
Combine (\ref{1-sec1-3}), (\ref{1-sec1-4}), and (\ref{psi-convex})
with the definition of $\phi_k(\m{x})$ { in (\ref{phi-psi-def})},
to obtain
\begin{eqnarray}
\phi_k({\m{x}}_{t+1})&\leq&
(1-\beta_t)f({\m{x}}_{t})
+  \beta_t\left[ f(\m{x}) +
\left\langle \nabla f(\hat{\m{x}}_{t}),\breve{\m{x}}_{t+1}
-\m{x}\right\rangle\right]
+\frac{\nu\beta_t^2}{2}\left\|\m{s}_t\right\|_{\C{H}}^2
+  \psi_k({\m{x}}_{t+1}) \nonumber\\
&\leq&  (1-\beta_t)\phi_k({\m{x}}_{t})
+ \beta_t\left[ f(\m{x}) +
\left\langle \nabla f(\hat{\m{x}}_{t}),\breve{\m{x}}_{t+1}
-\m{x}\right\rangle\right]
+\frac{\nu\beta_t^2}{2}\|\m{s}_t\|_{\C{H}}^2
+ \beta_t \psi_k(\breve{\m{x}}_{t+1}).
\label{1-sec1-2}
\end{eqnarray}

In {\tt xsub} of AS-ADMM,
\[
\breve{\m{x}}_{t+1}
 = \arg\min\limits_{\m{x}\in \mathcal{X}} \; H(\m{x}) :=
\langle \m{d}_t,\m{x}\rangle
+\frac{\gamma_t}{2}\left\|\m{x}
-\breve{\m{x}}_{t}\right\|_{\C{H}}^2+ \psi_k(\m{x}),
\]
where $\psi_k$ is defined in (\ref{phi-psi-def}).
Since $H$ is a quadratic with $\nabla^2 H = \gamma_t \C{H} + \C{M}_k$,
we have
\[
H(\m{x}) = H(\breve{\m{x}}_{t+1}) + \nabla H(\breve{\m{x}}_{t+1})
(\m{x} - \breve{\m{x}}_{t+1})
+ \frac{1}{2}\|\m{x} - \breve{\m{x}}_{t+1}\|_{\gamma_t\C{H} + \C{M}_k}^{ 2} .
\]
By the first-order optimality condition, we have
$\nabla H(\breve{\m{x}}_{t+1}) (\m{x} - \breve{\m{x}}_{t+1}) \ge 0$
for all { $\m{x} \in \C{X}$},
which implies that $H(\m{x}) \ge$ $H(\breve{\m{x}}_{t+1})
+ 0.5\|\m{x} - \breve{\m{x}}_{t+1}\|_{\gamma_t\C{H} + \C{M}_k}^{{2}}$
for all $\m{x} \in \C{X}$.
Rearrange this inequality to obtain
\begin{eqnarray}
&\langle \m{d}_t,\breve{\m{x}}_{t+1}-\m{x}\rangle+
\frac{\gamma_t}{2}\left\| {\m{s}}_{t}\right\|_{\C{H}}^2
+ \psi_k(\breve{\m{x}}_{t+1}) \le& \nonumber \\
&\frac{\gamma_t}{2}\left\|\m{x}-\breve{\m{x}}_{t}\right\|_{\C{H}}^2
+ \psi_k(\m{x}) -\frac{1}{2}\|\m{x}
-\breve{\m{x}}_{t+1}\|_{\gamma_t \C{H} + \C{M}_k}^2.&
\label{1-sec1-6}
\end{eqnarray}
Substituting
$\nabla f(\hat{\m{x}}_{t})= \DELTA_t+\m{d}_t$ in (\ref{1-sec1-2})
and utilizing (\ref{1-sec1-6}) yields
\begin{eqnarray}\label{1-sec1-7}
 \phi_k({\m{x}}_{t+1})
&\leq&  \beta_t\left[f(\m{x})
+\langle \m{d}_t,\breve{\m{x}}_{t+1}-\m{x}\rangle+
\frac{\gamma_t}{2}\left\|\m{s}_{t}\right\|_{\C{H}}^2
+\psi_k(\breve{\m{x}}_{t+1})\right] \nonumber\\
&&+(1-\beta_t) \phi_k({\m{x}}_{t})
+\beta_t\langle \DELTA_t,\breve{\m{x}}_{t+1}-\m{x}\rangle
+\frac{\nu\beta_t^2}{2}\|s_t\|_{\C{H}}^2
-\frac{\beta_t\gamma_t}{2}\left\|
\m{s}_{t}\right\|_{\C{H}}^2 \nonumber\\
&\leq&  \beta_t\left[f(\m{x})+\psi_k(\m{x}) +
\frac{\gamma_t}{2}\left\|\m{x}-\breve{\m{x}}_{t}\right\|_{\C{H}}^2
-\frac{1}{2}\|\m{x} -\breve{\m{x}}_{t+1}\|_{\gamma_t \C{H}
+\C{M}_k}^2\right]\nonumber\\
&& +(1-\beta_t) \phi_k({\m{x}}_{t}) +R_d \nonumber \\
&=&
\beta_t\left[\phi_k(\m{x}) +
\frac{\gamma_t}{2}\left\|\m{x}-\breve{\m{x}}_{t}\right\|_{\C{H}}^2
-\frac{1}{2}\|\m{x} -\breve{\m{x}}_{t+1}\|_{\gamma_t \C{H} +\C{M}_k}^2\right]
+(1-\beta_t) \phi_k({\m{x}}_{t}) +R_d
 \end{eqnarray}
where
\begin{eqnarray}\label{1-sec1-8}
R_d&=& \beta_t\langle \DELTA_t,\breve{\m{x}}_{t+1}-\m{x}
-\breve{\m{x}}_{t}+\breve{\m{x}}_{t}\rangle
+\frac{\nu\beta_t^2}{2}\left\|s_t\right\|_{\C{H}}^2
- \frac{\beta_t\gamma_t}{2}\left\|\breve{\m{x}}_{t+1}
-\breve{\m{x}}_{t}\right\|_{\C{H}}^2\nonumber\\
&=&  \beta_t\langle \DELTA_t,\breve{\m{x}}_{t}
-\m{x}\rangle +\beta_t\langle \DELTA_t,\m{s}_t\rangle
-\frac{\beta_t\gamma_t
-\nu\beta_t^2}{2}\left\|\m{s}_t\right\|_{\C{H}}^2 \nonumber\\
&\leq &\beta_t\langle \DELTA_t,\breve{\m{x}}_{t}
-\m{x}\rangle
+\beta_t \left\|\DELTA_t\right\|_{\C{H}^{-1}}\left\|s_t\right\|_{\C{H}}
-\frac{\beta_t\gamma_t-\nu\beta_t^2}{2}\left\|\m{s}_t\right\|_{\C{H}}^2
\nonumber\\
&=& \beta_t\langle \DELTA_t,\breve{\m{x}}_{t}-\m{x}\rangle
+\beta_t\gamma_t\left[ \frac{1}{\gamma_t}
\left\|\DELTA_t\right\|_{\C{H}^{-1}}
\left\|\m{s}_t\right\|_{\C{H}}
-\frac{1-\nu\beta_t/\gamma_t}{2}\left\|\m{s}_t\right\|_{\C{H}}^2\right].
 \end{eqnarray}
By the choice for $\beta_t$ and $\gamma_t$, we have
\begin{equation}\label{1-sec1-9}
1-\frac{\nu\beta_t}{\gamma_t}=
1-\nu\frac{2}{t+1}\frac{t\eta_k}{2}=1-\frac{t}{t+1}\eta_k\nu>1-\eta_k\nu> 0.
\end{equation}
For $c > 0$, use the inequality
\[
0 \le
\left( \frac{a}{2\sqrt{c}}\|\m{x}\|_{\C{H}^{-1}}-
\sqrt{c}\|\m{y}\|_{\C{H}}\right)^2 =
\frac{a^2}{4c}\|\m{x}\|_{\C{H}^{-1}}^2+
c \|\m{y}\|_{\C{H}}^2-a\|\m{x}\|_{\C{H}^{-1}} \|\m{y}\|_{\C{H}}
\]
to obtain
\begin{equation}\label{bill33}
a\left\|\DELTA_t\right\|_{\C{H}^{-1}} \left\|\m{s}_t\right\|_{\C{H}}
-c \left\|\m{s}_t\right\|_{\C{H}}^2\leq\frac{a^2}{4c}
\left\|\DELTA_t\right\|_{\C{H}^{-1}}^2.
\end{equation}
Note that $c=[1-\nu\beta_t/\gamma_t]/2 > 0$ by (\ref{1-sec1-9}).
Insert this choice for $c$ and $a=1/\gamma_t$ in (\ref{bill33}),
and use the resulting inequality in (\ref{1-sec1-8}) to obtain
\[
R_d \le
\beta_t\langle \DELTA_t,\breve{\m{x}}_{t}-\m{x}\rangle +
\frac{\beta_t}{2(\gamma_t - \nu \beta_t)} \|\DELTA_t\|_{\C{H}^{-1}}^2 \le
\beta_t\langle \DELTA_t,\breve{\m{x}}_{t}-\m{x}\rangle +
\frac{\beta_t}{2\gamma_t(1 - \nu \eta_k)} \|\DELTA_t\|_{\C{H}^{-1}}^2,
\]
where the last inequality is due to (\ref{1-sec1-9}).
Combining this inequality with (\ref{1-sec1-7}) gives
\begin{eqnarray}
\phi_k({\m{x}}_{t+1})
&\leq& (1-\beta_t) \phi_k({\m{x}}_{t})
+\beta_t\phi_k(\m{x})
+\frac{\beta_t\gamma_t}{2}\left[ \left\|\m{x}
-\breve{\m{x}}_{t}\right\|_{\C{H}}^2-\left\|\m{x}
-\breve{\m{x}}_{t+1}\right\|_{\C{H}}^2\right]\nonumber\\
&&-\frac{\beta_t }{2}\left\|\m{x}
-\breve{\m{x}}_{t+1}\right\|_{\C{M}_k}^2
+\beta_t\langle \DELTA_t,\breve{\m{x}}_{t}-\m{x}\rangle
+\frac{\beta_t}{2\gamma_t}\frac{\|\DELTA_t\|_{\C{H}^{-1}}^2}{1-\nu\eta_k}.
\nonumber
\end{eqnarray}
{ Now, by subtracting $\phi_k(\m{x})$ from each side of the above inequality, we obtain}
\begin{eqnarray}\label{1-sec1-10}
\phi_k({\m{x}}_{t+1})-\phi_k(\m{x})
&\leq& (1-\beta_t)[ \phi_k({\m{x}}_{t})
-\phi_k(\m{x})]
+\frac{\beta_t\gamma_t}{2}\left[ \left\|\m{x}
-\breve{\m{x}}_{t}\right\|_{\C{H}}^2-\left\|\m{x}
-\breve{\m{x}}_{t+1}\right\|_{\C{H}}^2\right]\nonumber\\
&&-\frac{\beta_t }{2}\left\|\m{x}
-\breve{\m{x}}_{t+1}\right\|_{\C{M}_k}^2
+\beta_t\langle \DELTA_t,\breve{\m{x}}_{t}
-\m{x}\rangle
+\frac{\beta_t}{2\gamma_t}\frac{\|\DELTA_t\|_{\C{H}^{-1}}^2}{1-\nu\eta_k}.
 \end{eqnarray}

{ Finally, by} the definitions $\Gamma_t = \frac{2}{t(t+1)}$, $\beta_t = 2/(t+1)$,
and $\gamma_t = \frac{2}{t\eta_k}$, we have
\begin{equation}\label{1-sec1-11}
\beta_t\gamma_t=
\frac{4}{t(t+1)\eta_k}, \quad \frac{\beta_t\gamma_t}{\Gamma_t}
=\frac{2}{\eta_k}, \quad
\frac{\beta_t}{\Gamma_t}=t, \quad
\textrm{and} \quad
 \frac{\beta_t}{\Gamma_t\gamma_t}=\frac{\eta_k t^2}{2}.
\end{equation}
Dividing each side of (\ref{1-sec1-10}) by $\Gamma_t$ and exploiting
these relations, we deduce that (\ref{1-sec1-5}) holds for $t \ge 2$.
Since $\Gamma_1 = \beta_1 = 1$, it also follows from
(\ref{1-sec1-10}) that (\ref{1-sec1-5}) holds for $t = 1$.
\end{proof}

Based on Lemma \ref{Sec31-302},
we are able to give a variational characterization of the AS-ADMM iterates.

\begin{lemma} \label{Sec31-3}
Let $\C{D}_k$ and $\DELTA_t$ be as defined in $(\ref{real-xsub})$
and $(\ref{1-sec1-05})$ respectively, and suppose the
$\eta_k \in (0, 1/\nu)$.
Then the iterates generated by AS-ADMM satisfy
\begin{equation}\label{Bill-1}
\quad f(\m{x})-f(\m{x}^{k+1})
- \left\langle \m{x} - \m{x}^{k+1},A\tr\tilde{\LAMBDA}^k\right\rangle \geq
\left\langle \m{x}^{k+1}-\m{x} , \C{D}_k (\m{x}^{k+1}
-\m{x}^k) \right\rangle+\zeta^k,
\end{equation}
for all $\m{x} \in \C{X}$, where
\begin{eqnarray}
\tilde{\LAMBDA}^k&=&\LAMBDA^k-\beta \left( A \m{x}^{k+1}
+B \m{y}^{k}  - \m{b} \right), \mbox{ and} \label{lambda_tilde}\\
\zeta^k &=& \frac{2}{M_k(M_k+1)}
\bigg[\frac{1}{\eta_k}\left(\left\|\m{x}
-\breve{\m{x}}^{k+1}\right\|_{\C{H}}^2-
\left\|\m{x}-\breve{\m{x}}^{k}\right\|_{\C{H}}^2\right)
\label{zeta_k} \\
&& \quad -\sum\limits_{t=1}^{M_k}t\langle \DELTA_t, \breve{\m{x}}_{t}
-\m{x}\rangle -\frac{\eta_k}{4(1-\eta_k\nu)}
\sum\limits_{t=1}^{M_k}t^2\left\|\DELTA_t\right\|_{\C{H}^{-1}}^2\bigg].
\nonumber
\end{eqnarray}
\end{lemma}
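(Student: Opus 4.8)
The plan is to telescope the recursion (\ref{1-sec1-5}) of Lemma~\ref{Sec31-302} over the inner iterations $t=1,\ldots,M_k$ of {\tt xsub}. Adding the $t\ge 2$ branch to the $t=1$ branch, the cross terms $\Gamma_t^{-1}[\phi_k(\m{x}_t)-\phi_k(\m{x})]$ cancel in pairs, leaving $\Gamma_{M_k}^{-1}[\phi_k(\m{x}_{M_k+1})-\phi_k(\m{x})]\le\sum_{t=1}^{M_k}\theta_t$. Since {\tt xsub} returns $(\m{x}^{k+1},\breve{\m{x}}^{k+1})=(\m{x}_{M_k+1},\breve{\m{x}}_{M_k+1})$ and is started from $\m{x}_1=\m{x}^k$, $\breve{\m{x}}_1=\breve{\m{x}}^k$, this reads $\Gamma_{M_k}^{-1}[\phi_k(\m{x}^{k+1})-\phi_k(\m{x})]\le\sum_{t=1}^{M_k}\theta_t$, where $\Gamma_{M_k}^{-1}=M_k(M_k+1)/2$.

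I would then split $\sum_{t=1}^{M_k}\theta_t$ according to the four groups of terms in (\ref{theta_t}). The $\C{H}$-difference carries the $t$-independent coefficient $1/\eta_k$, so it telescopes to $\eta_k^{-1}(\|\m{x}-\breve{\m{x}}^k\|_{\C{H}}^2-\|\m{x}-\breve{\m{x}}^{k+1}\|_{\C{H}}^2)$; the inner-product and $\|\DELTA_t\|_{\C{H}^{-1}}^2$ terms accumulate into exactly the two sums appearing in the definition (\ref{zeta_k}) of $\zeta^k$; and the $\C{M}_k$ terms accumulate into $-\frac{1}{2}\sum_{t=1}^{M_k}t\|\m{x}-\breve{\m{x}}_{t+1}\|_{\C{M}_k}^2$. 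Multiplying through by $\Gamma_{M_k}$ and recognizing $\zeta^k$ yields $\phi_k(\m{x}^{k+1})-\phi_k(\m{x})\le-\zeta^k-\frac{1}{2}\Gamma_{M_k}\sum_{t=1}^{M_k}t\|\m{x}-\breve{\m{x}}_{t+1}\|_{\C{M}_k}^2$.

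The key step is to bound the remaining $\C{M}_k$-sum from below rather than simply discard it; discarding it would leave an inequality off by $\frac{1}{2}\|\m{x}^{k+1}-\m{x}\|_{\C{M}_k}^2$ in the wrong direction. Unrolling the averaging step $\m{x}_{t+1}=\beta_t\breve{\m{x}}_{t+1}+(1-\beta_t)\m{x}_t$ together with the identity $(1-\beta_t)/\Gamma_t=1/\Gamma_{t-1}$ (a consequence of $\beta_t=2/(t+1)$ and $\Gamma_t=2/(t(t+1))$) gives $\m{x}^{k+1}=\Gamma_{M_k}\sum_{t=1}^{M_k}t\,\breve{\m{x}}_{t+1}$, which is a genuine convex combination since $\Gamma_{M_k}\sum_{t=1}^{M_k}t=1$. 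Because $\C{M}_k\succeq\beta A\tr A\succeq\m{0}$, the map $\m{u}\mapsto\|\m{u}-\m{x}\|_{\C{M}_k}^2$ is convex, so Jensen's inequality gives $\sum_{t=1}^{M_k}t\|\m{x}-\breve{\m{x}}_{t+1}\|_{\C{M}_k}^2\ge\Gamma_{M_k}^{-1}\|\m{x}^{k+1}-\m{x}\|_{\C{M}_k}^2$, and hence $\phi_k(\m{x}^{k+1})-\phi_k(\m{x})\le-\zeta^k-\frac{1}{2}\|\m{x}^{k+1}-\m{x}\|_{\C{M}_k}^2$.

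Finally I would expand $\phi_k=f+\psi_k$ using (\ref{phi-psi-def}) and collect the three $\C{M}_k$ quadratics via the identity $\frac{1}{2}\|\m{a}\|_{\C{M}_k}^2-\frac{1}{2}\|\m{b}\|_{\C{M}_k}^2+\frac{1}{2}\|\m{a}-\m{b}\|_{\C{M}_k}^2=\langle\m{a}-\m{b},\C{M}_k\m{a}\rangle$ with $\m{a}=\m{x}^{k+1}-\m{x}^k$ and $\m{b}=\m{x}-\m{x}^k$, which turns them into $\langle\m{x}^{k+1}-\m{x},\C{M}_k(\m{x}^{k+1}-\m{x}^k)\rangle$. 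Subtracting the definitions of $\m{h}^k$ and $\tilde{\LAMBDA}^k$ in (\ref{lambda_tilde}) gives $\m{h}^k=-A\tr\tilde{\LAMBDA}^k-\beta A\tr A(\m{x}^{k+1}-\m{x}^k)$; substituting this into $\langle\m{h}^k,\m{x}^{k+1}-\m{x}\rangle$ moves $-\beta A\tr A$ next to $\C{M}_k$ to form $\C{D}_k=\C{M}_k-\beta A\tr A$ as in (\ref{real-xsub}) and produces the term $-\langle\m{x}-\m{x}^{k+1},A\tr\tilde{\LAMBDA}^k\rangle$, after which rearrangement gives (\ref{Bill-1}). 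The main obstacle is the convexity step in the third paragraph: it is the only place where the acceleration structure is genuinely used, and the recovered $-\frac{1}{2}\|\m{x}^{k+1}-\m{x}\|_{\C{M}_k}^2$ is precisely the quantity needed to complete the quadratic identity in the last step.
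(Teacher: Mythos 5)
Your proof is correct and follows essentially the same route as the paper's: telescoping Lemma~\ref{Sec31-302} over $t=1,\ldots,M_k$, representing $\m{x}^{k+1}$ as the convex combination $\Gamma_{M_k}\sum_{t=1}^{M_k} t\,\breve{\m{x}}_{t+1}$ so that Jensen's inequality converts the $\C{M}_k$-sum into $\tfrac{1}{2}\|\m{x}^{k+1}-\m{x}\|_{\C{M}_k}^2$, and then expanding $\phi_k = f + \psi_k$ with the three-term quadratic identity and the rewriting $\m{h}^k = -A\tr\tilde{\LAMBDA}^k - \beta A\tr A(\m{x}^{k+1}-\m{x}^k)$. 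The only cosmetic difference is that you make explicit that $\C{M}_k \succeq \m{0}$ (from the algorithm's constraint $\C{M}_k \succeq \beta A\tr A$) is what justifies the Jensen step, a point the paper leaves implicit.
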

\begin{proof}
Let us define $T = M_k$.
{ Summing} (\ref{1-sec1-5}) over $1 \le t \le T$ and recalling that
$\breve{\m{x}}^k = \breve{\m{x}}_1$,
$\m{x}^{k+1} = \m{x}_{T+1}$, and $\breve{\m{x}}^{k+1} = \breve{\m{x}}_{T+1}$,
we obtain
\begin{eqnarray}\label{1-sec1-12}
&&\frac{1}{\Gamma_T}
\left[\phi_k({\m{x}}^{k+1})-\phi_k(\m{x})\right]
\le \sum_{t=1}^T \theta_t \nonumber \\
&=& \frac{1}{\eta_k}\left[ \left\|\m{x}
-\breve{\m{x}}^{k}\right\|_{\C{H}}^2
-\left\|\m{x}-\breve{\m{x}}^{k+1}\right\|_{\C{H}}^2\right]
-\frac{1}{2}\sum\limits_{t=1}^{T}t\left\|\m{x}
-\breve{\m{x}}_{t+1}\right\|_{\C{M}_k}^2\nonumber\\
&&+\sum\limits_{t=1}^{T}t\langle \DELTA_t, \breve{\m{x}}_{t}-\m{x}\rangle
+\frac{\eta_k }{4(1-\eta_k\nu)}
\sum\limits_{t=1}^{T}t^2\left\|\DELTA_t\right\|_{\C{H}^{-1}}^2
\end{eqnarray}
for any $\m{x} \in \mathcal{X}$, where $\theta_t$ is defined in (\ref{theta_t}).
Dividing the update formula
${\m{x}}_{t+1}= \beta_t \breve{\m{x}}_{t+1}+(1-\beta_t){\m{x}}_{t}$
by $\Gamma_t$ and exploiting the identity
$\beta_t/\Gamma_t = t$ from (\ref{1-sec1-11}) yields
\[
\frac{1}{\Gamma_t}{\m{x}}_{t+1}=
\frac{1}{\Gamma_{t-1}}{\m{x}}_{t}
+t \breve{\m{x}}_{t+1}.
\]
We sum over $2 \le t \le T$
and recall that $\Gamma_1=\beta_1=1$ to obtain
\begin{eqnarray}
{\m{x}}^{k+1}&=& \Gamma_T
\left\{
\frac{1}{\Gamma_1}{\m{x}}_{2} +
\sum\limits_{t=2}^{T}t\breve{\m{x}}_{t+1} \right\}
=\Gamma_T\left\{ {\m{x}}_{2}-\breve{\m{x}}_{2}+
\sum\limits_{t=1}^{T} t\breve{\m{x}}_{t+1} \right\} \nonumber \\
&=&\Gamma_T\left\{
[\beta_1\breve{\m{x}}_{2}+(1-\beta_1){\m{x}}_{1}]
- \breve{\m{x}}_{2} +
\sum\limits_{t=1}^{T} t\breve{\m{x}}_{t+1}
\right\}
=\sum\limits_{t=1}^{T}(t \Gamma_T)\breve{\m{x}}_{t+1}.
\label{h0}
\end{eqnarray}
Since $(t\Gamma_T)$ for $1 \le t \le T$ sums to 1 and
the quadratic term $\|\m{z}-\m{x}\|_{\C{M}_k}^2$ is convex in $\m{z}$,
it follows from (\ref{h0}) that for any choice of $\m{x}$, { we have}
\[
\left\|{\m{x}}^{k+1}-\m{x}\right\|_{\C{M}_k}^2\le
\sum\limits_{t=1}^{T}(t \Gamma_T)
\left\|\breve{\m{x}}_{t+1}-\m{x}\right\|_{\C{M}_k}^2.
\]
Inserting this inequality in (\ref{1-sec1-12}) gives
\begin{eqnarray}
&\frac{1}{\Gamma_T} \left[\phi_k({\m{x}}^{k+1})
-\phi_k(\m{x})+\frac{1}{2}\left\|{\m{x}}^{k+1}
-\m{x}\right\|_{\C{M}_k}^2\right]
\le \frac{1}{\eta_k}\left[ \left\|\m{x}
-\breve{\m{x}}^{k}\right\|_{\C{H}}^2-\left\|\m{x}
-\breve{\m{x}}^{k+1}\right\|_{\C{H}}^2\right] &
\nonumber\\
& +\sum\limits_{t=1}^{T}t\langle \DELTA_t, \breve{\m{x}}_{t}-\m{x}\rangle
+\frac{\eta_k }{4(1-\eta_k\nu)}
\sum\limits_{t=1}^{T}t^2\left\|\DELTA_t\right\|_{\C{H}^{-1}}^2.&
\label{1-sec1-13}
\end{eqnarray}
Now, by the definition of $\phi_k$ and $\psi_k$, we have
\begin{eqnarray*}
\phi_k({\m{x}}^{k+1})-\phi_k(\m{x})
&=& f({\m{x}}^{k+1}) -f(\m{x})
+ \psi_k({\m{x}}^{k+1}) -\psi_k(\m{x}) \quad \mbox{and}\\
\psi_k({\m{x}}^{k+1})-\psi_k(\m{x})
&=&
\left\langle \m{h}^k, {\m{x}}^{k+1}-\m{x}\right\rangle
+\frac{1}{2}\left[ \|{\m{x}}^{k+1}-\m{x}^k\|_{\C{M}_k}^2
-\|\m{x}-\m{x}^k\|_{\C{M}_k}^2 \right].
\end{eqnarray*}
By the definition of $\m{h}^k$, it follows that
\begin{eqnarray*}
 \m{h}^k&=&-A\tr \left[\LAMBDA^k
-\beta\left(A\m{x}^k+B\m{y}^k-\m{b}\right)\right]\nonumber\\
&=&-A\tr \left[\LAMBDA^k-\beta \left(A\m{x}^{k+1}+B\m{y}^k-\m{b}\right)\right]
 - \beta A \tr A  \left(\m{x}^{k+1} -\m{x}^k \right) \nonumber\\
&=& - A \tr  \tilde{\LAMBDA}^k
- \beta A \tr A  \left(\m{x}^{k+1} -\m{x}^k \right).
\end{eqnarray*}
The identity
\[
(\m{a}-\m{b})\tr \C{M}_k (\m{a}-\m{c})
=\frac{1}{2}\left\{\|\m{a}-\m{c}\|_{\C{M}_k}^2
-\|\m{c}-\m{b}\|_{\C{M}_k}^2+\|\m{a}-\m{b}\|_{ \C{M}_k}^2\right\}
\]
with $\m{a} = \m{x}^{k+1}$, $\m{b} = { \m{x}^k}$, and $\m{c} =\m{x}$
implies that
\[
\frac{1}{2}\left[ \left\|{\m{x}}^{k+1}-\m{x}^k\right\|_{\C{M}_k}^2
-\left\|\m{x}-\m{x}^k\right\|_{\C{M}_k}^2
+\left\|{\m{x}}^{k+1}-\m{x}\right\|_{\C{M}_k}^2\right] =
\left( {\m{x}}^{k+1}-\m{x}^k \right) \tr \C{M}_k
\left({\m{x}}^{k+1}-\m{x}\right).
\]
Insert all these relations in (\ref{1-sec1-13}) and make the
substitutions $T = M_k$ and { $\Gamma_T = 2/(T(T+1))$
to obtain (\ref{Bill-1}), which completes the proof.}
\end{proof}
\medskip

We now establish the following
variational inequality.
\smallskip

\begin{theorem} \label{Sec31-bz6}
If $\eta_k \in (0, 1/\nu)$, then the iterates generated by AS-ADMM satisfy
\begin{equation} \label{Bill3}
F(\m{w})-F(\tilde{\m{w}}^k) +
\left\langle \m{w} - \tilde{\m{w}}^k,
\mathcal{J}(\tilde{\m{w}}^k)\right\rangle
\geq  \langle {\m{w}} - \tilde{\m{w}}^k ,
Q_k (\m{w}^k - \m{w}^{k+1}) \rangle + \zeta^k
\end{equation}
for all $\m{w} \in \Omega$,
where $\zeta^k$ is defined in $(\ref{zeta_k})$, $\tilde{\LAMBDA}^k$ is defined in $(\ref{lambda_tilde})$, and
\begin{equation}\label{tilde-wk-Q}
\tilde{\m{w}}^k :=
\left(\begin{array}{c}
  \m{\tilde{x}}^k\\ \m{\tilde{y}}^k\\  \tilde{\LAMBDA}^k
\end{array}\right) :=
\left(\begin{array}{c}
  \m{x}^{k+1}\\ \m{y}^{k+1}\\    \tilde{\LAMBDA}^k
\end{array}\right),\quad
Q_k=\left[\begin{array}{ccccc}
           \C{D}_k && &&  \\
          &&\beta B\tr B&&    \\
      & & & &    \frac{1}{s\beta} I
\end{array}\right].
\end{equation}
\end{theorem}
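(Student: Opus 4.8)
The plan is to expand the left-hand side of (\ref{Bill3}) into three pieces aligned with the block structure of $Q_k$ --- an $\m{x}$-piece, a $\m{y}$-piece, and a $\LAMBDA$-piece --- and to bound each separately against the corresponding block of $\langle \m{w} - \tilde{\m{w}}^k, Q_k(\m{w}^k - \m{w}^{k+1})\rangle$. Writing out $F$ and $\mathcal{J}(\tilde{\m{w}}^k)$ according to (\ref{vector-w-v}) and recalling from (\ref{tilde-wk-Q}) that $\tilde{\m{w}}^k = (\m{x}^{k+1}, \m{y}^{k+1}, \tilde{\LAMBDA}^k)$, the left-hand side of (\ref{Bill3}) becomes
\begin{eqnarray*}
&& \big[\, f(\m{x}) - f(\m{x}^{k+1}) - \langle \m{x} - \m{x}^{k+1}, A\tr \tilde{\LAMBDA}^k \rangle \,\big] \\
&& + \big[\, g(\m{y}) - g(\m{y}^{k+1}) - \langle \m{y} - \m{y}^{k+1}, B\tr \tilde{\LAMBDA}^k \rangle \,\big] \\
&& + \langle \LAMBDA - \tilde{\LAMBDA}^k, \; A\m{x}^{k+1} + B\m{y}^{k+1} - \m{b} \rangle .
\end{eqnarray*}
I would then show the first bracket dominates the $\C{D}_k$ block of the right-hand side plus $\zeta^k$, the second bracket dominates the $\beta B\tr B$ block, and the third term equals the $\frac{1}{s\beta}\m{I}$ block.

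For the $\m{x}$-piece, Lemma~\ref{Sec31-3} applies verbatim: it states exactly that the first bracket is at least $\langle \m{x}^{k+1} - \m{x}, \C{D}_k(\m{x}^{k+1} - \m{x}^k)\rangle + \zeta^k$, which equals $\langle \m{x} - \m{x}^{k+1}, \C{D}_k(\m{x}^k - \m{x}^{k+1})\rangle + \zeta^k$, the $\m{x}$-block of $Q_k(\m{w}^k - \m{w}^{k+1})$ paired with $\m{w} - \tilde{\m{w}}^k$, together with $\zeta^k$. The $\LAMBDA$-piece is handled by the multiplier update $\LAMBDA^{k+1} = \LAMBDA^k - s\beta(A\m{x}^{k+1} + B\m{y}^{k+1} - \m{b})$, which gives $A\m{x}^{k+1} + B\m{y}^{k+1} - \m{b} = \frac{1}{s\beta}(\LAMBDA^k - \LAMBDA^{k+1})$; substituting this turns the third term into $\langle \LAMBDA - \tilde{\LAMBDA}^k, \frac{1}{s\beta}(\LAMBDA^k - \LAMBDA^{k+1})\rangle$, which is precisely the $\LAMBDA$-block --- an equality, so no slack is needed here.

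The remaining and most delicate piece is the $\m{y}$-block, which I would obtain from the first-order optimality condition for the $\m{y}$-subproblem. Since $\m{y}^{k+1}$ minimizes $g(\m{y}) + \frac{\beta}{2}\|A\m{x}^{k+1} + B\m{y} - \m{b} - \LAMBDA^k/\beta\|^2$ over $\C{Y}$, convexity of $g$ yields, for all $\m{y} \in \C{Y}$,
\[
g(\m{y}) - g(\m{y}^{k+1}) + \langle \m{y} - \m{y}^{k+1}, \; \beta B\tr(A\m{x}^{k+1} + B\m{y}^{k+1} - \m{b}) - B\tr \LAMBDA^k \rangle \ge 0 .
\]
The crux is to rewrite this inner-product term using the definition (\ref{lambda_tilde}) of $\tilde{\LAMBDA}^k$. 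Writing $\LAMBDA^k = \tilde{\LAMBDA}^k + \beta(A\m{x}^{k+1} + B\m{y}^k - \m{b})$, the two penalty contributions collapse to
\[
\beta B\tr(A\m{x}^{k+1} + B\m{y}^{k+1} - \m{b}) - B\tr\LAMBDA^k = \beta B\tr B(\m{y}^{k+1} - \m{y}^k) - B\tr \tilde{\LAMBDA}^k ,
\]
so the optimality condition becomes exactly $g(\m{y}) - g(\m{y}^{k+1}) - \langle \m{y} - \m{y}^{k+1}, B\tr\tilde{\LAMBDA}^k\rangle \ge \langle \m{y} - \m{y}^{k+1}, \beta B\tr B(\m{y}^k - \m{y}^{k+1})\rangle$, the $\m{y}$-block. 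Summing the three pieces and reading off the block-diagonal $Q_k$ gives (\ref{Bill3}). The main obstacle is precisely this algebraic collapse in the $\m{y}$-piece: one must track how the shift from $\LAMBDA^k$ to $\tilde{\LAMBDA}^k$ absorbs the $A\m{x}^{k+1}$ term exactly and leaves behind the clean $\beta B\tr B(\m{y}^{k+1} - \m{y}^k)$ factor matching the middle block of $Q_k$.
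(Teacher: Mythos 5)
Your proposal is correct and follows essentially the same route as the paper's proof: the $\m{x}$-block is exactly Lemma~\ref{Sec31-3} (inequality (\ref{Bill-1})), the $\m{y}$-block is the paper's (\ref{Bill0}), obtained by the same rewriting of the $\m{y}$-subproblem optimality condition via the substitution $\LAMBDA^k = \tilde{\LAMBDA}^k + \beta(A\m{x}^{k+1}+B\m{y}^k-\m{b})$, and the $\LAMBDA$-block is the paper's identity (\ref{Bill2}) from the multiplier update; summing the three relations is precisely how the paper concludes.
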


\begin{proof}
Since the objective in the $\m{y}$-subproblem is the sum of a nonsmooth
and a smooth term, the first-order optimality condition can be expressed as
\begin{equation}\label{Bill1}
g(\m{y}) - g(\m{y}^{k+1})
+ \left\langle\m{y}-\m{y}^{k+1}, \m{p}_k \right\rangle\geq  0
\end{equation}
for all $\m{y}\in \mathcal{Y}$,
where $\m{p}_k$ is the gradient with respect to $\m{y}$, evaluated at
$(\m{x}^{k+1}, \m{y}^{k+1})$, of the smooth term:
\begin{eqnarray*}
\m{p}_k&=&
-B\tr  \LAMBDA^{k} +\beta B\tr\left(A\m{x}^{k+1}+B\m{y}^{k+1}-\m{b}\right)
\nonumber\\
&=& -B\tr  \left[ \LAMBDA^{k}  - \beta
\left(A\m{x}^{k+1}+B\m{y}^k-\m{b}\right)
- \beta B \left(\m{y}^{k+1} - \m{y}^k  \right)  \right] \nonumber \\
&=&   -B\tr \tilde{\LAMBDA}^{k}
+ \beta B \tr B \left(\m{y}^{k+1} - \m{y}^k  \right).
\end{eqnarray*}
Here $\tilde{\LAMBDA}^k$ is defined in $(\ref{lambda_tilde})$.
Substituting $\m{p}_k$ into (\ref{Bill1}) gives
\begin{equation}\label{Bill0}
g(\m{y})-g(\m{y}^{k+1})
- \left\langle \m{y} - \m{y}^{k+1},B\tr\tilde{\LAMBDA}^k\right\rangle \geq
\beta \langle \m{y}^{k+1}-\m{y}, {B}\tr{B} (\m{y}^{k+1} -\m{y}^k) \rangle
\end{equation}
for all $\m{y} \in \C{Y}$.

The update formula for $\g{\lambda}^{k+1}$ yields the relation
\[
A\m{x}^{k+1} + B\m{y}^{k+1} - \m{b} =
\frac{\g{\lambda}^k - \g{\lambda}^{k+1}}{s\beta} .
\]
Take the inner product {of the above equality} with $\g{\lambda}-\tilde{\g{\lambda}}^k$ to obtain
\begin{equation}\label{Bill2}
\langle \g{\lambda}-\tilde{\g{\lambda}}^k ,
A\m{x}^{k+1} + B\m{y}^{k+1} - \m{b} \rangle
= \frac{1}{s\beta} \langle \g{\lambda}-\tilde{\g{\lambda}}^k ,
\g{\lambda}^k - \g{\lambda}^{k+1} \rangle .
\end{equation}
Adding (\ref{Bill-1}), (\ref{Bill0}), and (\ref{Bill2}) yields (\ref{Bill3}).
\end{proof}

For the convergence analysis,
we need to further analyze the right side of (\ref{Bill3}).
\begin{corollary} \label{key-lemma}
If $\eta_k \in (0, 1/\nu)$, then the
iterates of AS-ADMM satisfy the following relation:
\begin{eqnarray}
& & F(\m{w})-F(\tilde{\m{w}}^k) +
( \m{w} - \tilde{\m{w}}^k)\tr \mathcal{J}(\m{w})  \nonumber \\
& \ge & \frac{1}{2}\left\{\left\|\m{w}-\m{w}^{k+1}\right\|^2_{{Q}_k}
-\left\|\m{w}-\m{w}^{k}\right\|^2_{{Q}_k}
+  \left\|\m{w}^k-\tilde{\m{w}}^k\right\|_{{G}_k}^2 \right\} + \zeta^k,
\label{bjc-39}
\end{eqnarray}
{for any $\m{w} \in \Omega$,} where $\zeta^k$ is defined in 
$(\ref{zeta_k})$, $Q_k$ is given by $(\ref{tilde-wk-Q})$ and
\begin{equation}\label{tilde-G}
{G}_k
=\left[\begin{array}{ccccc}
           \C{D}_k && &&  \\
          &&(1-s)\beta B\tr B&&  (s-1) B \tr  \\
      && (s-1) B & &    \frac{2-s}{\beta} I
\end{array}\right].
\end{equation}
\end{corollary}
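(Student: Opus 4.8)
The plan is to obtain (\ref{bjc-39}) from the variational inequality (\ref{Bill3}) of Theorem~\ref{Sec31-bz6} through two independent manipulations: rewriting the left-hand side by skew-symmetry of $\C{J}$, and converting the right-hand inner product into the difference of $Q_k$-norms together with the quadratic $G_k$-term.

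First I would dispose of the left side. Applying the skew-symmetry relation (\ref{Sec1-J}) to the pair $\m{w}$, $\tilde{\m{w}}^k$ gives $(\m{w}-\tilde{\m{w}}^k)\tr[\C{J}(\m{w})-\C{J}(\tilde{\m{w}}^k)]=0$, so that $\langle \m{w}-\tilde{\m{w}}^k, \C{J}(\tilde{\m{w}}^k)\rangle = (\m{w}-\tilde{\m{w}}^k)\tr \C{J}(\m{w})$. Hence the left side of (\ref{Bill3}) is \emph{identically} equal to the left side of (\ref{bjc-39}), with no inequality incurred. I would then treat the right-side inner product with the polarization identity
\[
2\langle \m{w}-\m{v}, Q(\m{a}-\m{b})\rangle
= \|\m{w}-\m{b}\|_Q^2 - \|\m{w}-\m{a}\|_Q^2 + \|\m{v}-\m{a}\|_Q^2 - \|\m{v}-\m{b}\|_Q^2,
\]
valid for any symmetric $Q$. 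Taking $\m{v}=\tilde{\m{w}}^k$, $\m{a}=\m{w}^k$, $\m{b}=\m{w}^{k+1}$, and $Q=Q_k$, this expresses $\langle \m{w}-\tilde{\m{w}}^k, Q_k(\m{w}^k-\m{w}^{k+1})\rangle$ as one half of $\|\m{w}-\m{w}^{k+1}\|_{Q_k}^2 - \|\m{w}-\m{w}^k\|_{Q_k}^2 + \|\tilde{\m{w}}^k-\m{w}^k\|_{Q_k}^2 - \|\tilde{\m{w}}^k-\m{w}^{k+1}\|_{Q_k}^2$. The first two terms already appear in (\ref{bjc-39}), so the proof reduces to the single identity $\|\tilde{\m{w}}^k-\m{w}^k\|_{Q_k}^2 - \|\tilde{\m{w}}^k-\m{w}^{k+1}\|_{Q_k}^2 = \|\m{w}^k-\tilde{\m{w}}^k\|_{G_k}^2$.

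This last identity is the crux and the main obstacle. Since $\tilde{\m{w}}^k$ and $\m{w}^{k+1}$ share their $\m{x}$- and $\m{y}$-blocks (see (\ref{tilde-wk-Q})), the vector $\tilde{\m{w}}^k-\m{w}^{k+1}$ has only a nonzero multiplier block, while the full $\C{D}_k$ and $\beta B\tr B$ contributions come from $\tilde{\m{w}}^k-\m{w}^k$. I would use the definition (\ref{lambda_tilde}) of $\tilde{\LAMBDA}^k$ together with the dual update $\LAMBDA^{k+1}=\LAMBDA^k-s\beta\m{r}$, where $\m{r}=A\m{x}^{k+1}+B\m{y}^{k+1}-\m{b}$ and $\Delta\m{y}=\m{y}^{k+1}-\m{y}^k$, to obtain the two multiplier differences $\tilde{\LAMBDA}^k-\LAMBDA^k=\beta(B\Delta\m{y}-\m{r})$ and $\tilde{\LAMBDA}^k-\LAMBDA^{k+1}=\beta((s-1)\m{r}+B\Delta\m{y})$. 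Substituting these into the multiplier norms, which carry the coefficient $1/(s\beta)$ from $Q_k$, the left member of the reduced identity becomes $\|\Delta\m{x}\|_{\C{D}_k}^2 + \beta\|B\Delta\m{y}\|^2 + \tfrac{\beta}{s}\big(\|B\Delta\m{y}-\m{r}\|^2 - \|(s-1)\m{r}+B\Delta\m{y}\|^2\big)$.

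Expanding this difference of squares, the algebraic fact $1-(s-1)^2=s(2-s)$ causes the residual terms to collapse cleanly: the $\|B\Delta\m{y}\|^2$, $\langle B\Delta\m{y},\m{r}\rangle$, and $\|\m{r}\|^2$ coefficients reduce to $\beta$, $-2\beta$, and $\beta(2-s)$ respectively. Expanding the $G_k$-quadratic form of (\ref{tilde-G}) in the same three quantities—using $\tilde{\LAMBDA}^k-\LAMBDA^k=\beta(B\Delta\m{y}-\m{r})$ once more to eliminate $\Delta\LAMBDA$—I expect the $(1-s)\beta B\tr B$ diagonal block, the $(s-1)B$ coupling between the $\m{y}$- and $\LAMBDA$-blocks, and the $\tfrac{2-s}{\beta}I$ multiplier block to reproduce exactly these same three coefficients, so that both sides agree term by term. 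Matching the $\C{D}_k$ block is trivial since it coincides in $Q_k$ and $G_k$. This completes the reduced identity, and combining it with the two preceding steps yields (\ref{bjc-39}). The only delicate point is keeping the signs of the cross term $\langle B\Delta\m{y},\m{r}\rangle$ consistent between the $Q_k$-expansion and the $G_k$-expansion, which is exactly where the $s(2-s)$ simplification is needed.
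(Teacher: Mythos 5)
Your proposal is correct and takes essentially the same route as the paper's proof: both start from Theorem~\ref{Sec31-bz6}, apply the identical polarization identity with the same choice of points, reduce everything to the identity $\|\m{w}^k-\tilde{\m{w}}^k\|_{Q_k}^2 - \|\m{w}^{k+1}-\tilde{\m{w}}^k\|_{Q_k}^2 = \|\m{w}^k-\tilde{\m{w}}^k\|_{G_k}^2$ using the dual update and the definition of $\tilde{\LAMBDA}^k$, and invoke the skew-symmetry of $\C{J}$ to replace $\C{J}(\tilde{\m{w}}^k)$ by $\C{J}(\m{w})$. The only difference is bookkeeping: the paper substitutes $\g{\lambda}^{k+1}-\tilde{\g{\lambda}}^k = s\beta B(\m{y}^k-\m{y}^{k+1}) + (1-s)(\g{\lambda}^k-\tilde{\g{\lambda}}^k)$ and reads off $G_k$ directly, whereas you reparameterize in terms of the residual $\m{r}$ and $B\Delta\m{y}$ and match coefficients on both sides, which is the same computation.
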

\begin{proof}
The identity
\[
 2(\m{a}-\m{b})\tr  {Q}_k(\m{c}-\m{d})=
\|\m{a}-\m{d}\|_{{Q}_k}^2-\|\m{a}-\m{c}\|_{{Q}_k}^2
+ \|\m{c}-\m{b}\|_{{Q}_k}^2-\|\m{b}-\m{d}\|_{{Q}_k}^2
\]
with the choices
$\m{a} = \m{w}$, $\m{b}= \tilde{\m{w}}^k$,
$\m{c}=\m{w}^k$, and $\m{d}=\m{w}^{k+1}$ gives
\begin{eqnarray}
&\langle \m{w} - \tilde{\m{w}}^k,
Q_k (\m{w}^k - \m{w}^{k+1}) \rangle =& \label{expand} \\
&\frac{1}{2}\left\{\left\|\m{w}-\m{w}^{k+1}\right\|^2_{{Q}_k}-
\left\|\m{w}-\m{w}^{k}\right\|^2_{{Q}_k}
+ \left\|\m{w}^k-\tilde{\m{w}}^k\right\|_{{Q}_k}^2 -
\left\|\m{w}^{k+1}-\tilde{\m{w}}^k\right\|_{{Q}_k}^2\right\}.&
\nonumber
\end{eqnarray}
The update formula for $\g{\lambda}^{k+1}$, together with the definition
of $\tilde{\g{\lambda}}^k$ in (\ref{lambda_tilde}), yield the relation
\begin{equation} \label{121-zlg}
\g{\lambda}^{k+1} - \g{\lambda}^k = s\beta B(\m{y}^k - {\m{y}}^{k+1})
-s(\g{\lambda}^k - \tilde{\g{\lambda}}^k) .
\end{equation}
Hence, we have
\begin{eqnarray*}
\g{\lambda}^{k+1} - \tilde{\g{\lambda}}^k &=&
\g{\lambda}^{k+1} - \g{\lambda}^k + {\g{\lambda}}^k - \tilde{\g{\lambda}}^k \\
&=& s\beta B (\m{y}^k - \m{y}^{k+1})
+ (1-s) (\g{\lambda}^k - \tilde{\g{\lambda}}^k) .
\end{eqnarray*}
Since the only nonzero component of $\m{w}^{k+1}-\tilde{\m{w}}^k$ is
the $\g{\lambda}^{k+1} - \tilde{\g{\lambda}}^k$ component, we have
\[
\left\|\m{w}^{k+1}-\tilde{\m{w}}^k\right\|_{{Q}_k}^2 =
\frac{1}{s\beta} \left\| s\beta B (\m{y}^k - \m{y}^{k+1})
+ (1-s) (\g{\lambda}^k - \tilde{\g{\lambda}}^k) \right\|^2 .
\]
With this substitution, it follows that
\[
\left\|\m{w}^k-\tilde{\m{w}}^k\right\|_{{Q}_k}^2 -
\left\|\m{w}^{k+1}-\tilde{\m{w}}^k\right\|_{{Q}_k}^2 =
\left\|\m{w}^k-\tilde{\m{w}}^k\right\|_{{G}_k}^2 .
\]
Combine this identity with (\ref{expand}), Theorem~\ref{Sec31-bz6},
and the skew symmetry of $\C{J}$ to complete the proof.
\end{proof}
\medskip

The following theorem provides a lower bound for the $G_k$ term
in (\ref{bjc-39}).

\begin{theorem} \label{Gkterm}
The iterates of AS-ADMM satisfy
\begin{eqnarray}
\left\|\m{w}^k-\tilde{\m{w}}^k\right\|_{{G}_k}^2 &\ge &
(2-s)\beta \left\|A\m{x}^{k+1}+B\m{y}^{k+1}-\m{b}\right\|^2 + 
\left\|\m{x}^k - \m{x}^{k+1}\right\|_{\C{D}_k}^2 \nonumber \\
&&- (1-s)^2\beta \left\|A\m{x}^k +B\m{y}^k -\m{b}\right\|^2,
\label{1qaz}
\end{eqnarray}
where $G_k$ and $\C{D}_k$ are defined in 
$(\ref{tilde-G})$ and $(\ref{real-xsub})$, respectively.
\end{theorem}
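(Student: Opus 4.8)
The plan is to evaluate $\|\m{w}^k-\tilde{\m{w}}^k\|_{G_k}^2$ exactly from the block structure of $G_k$ in (\ref{tilde-G}), and then to invoke the optimality of the $\m{y}$-subproblem to control the only nontrivial leftover term. First I would record the three components of $\m{w}^k-\tilde{\m{w}}^k$, namely $\m{x}^k-\m{x}^{k+1}$, $\m{y}^k-\m{y}^{k+1}$, and $\LAMBDA^k-\tilde{\LAMBDA}^k$. The $\m{x}$-block of $G_k$ contributes $\|\m{x}^k-\m{x}^{k+1}\|_{\C{D}_k}^2$ directly. The key simplification comes from (\ref{lambda_tilde}), which gives $\LAMBDA^k-\tilde{\LAMBDA}^k=\beta(A\m{x}^{k+1}+B\m{y}^k-\m{b})$. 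Introducing the shorthands $\m{u}=B(\m{y}^k-\m{y}^{k+1})$, $\m{r}^{k+1}=A\m{x}^{k+1}+B\m{y}^{k+1}-\m{b}$, and $\m{r}^k=A\m{x}^k+B\m{y}^k-\m{b}$, and using $A\m{x}^{k+1}+B\m{y}^k-\m{b}=\m{r}^{k+1}+\m{u}$, this reads $\LAMBDA^k-\tilde{\LAMBDA}^k=\beta(\m{r}^{k+1}+\m{u})$.

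Substituting $B(\m{y}^k-\m{y}^{k+1})=\m{u}$ and $\LAMBDA^k-\tilde{\LAMBDA}^k=\beta(\m{r}^{k+1}+\m{u})$ into the $(\m{y},\LAMBDA)$-block of $G_k$ and expanding, I expect the coefficients of $\|\m{u}\|^2$, $\langle\m{u},\m{r}^{k+1}\rangle$, and $\|\m{r}^{k+1}\|^2$ to collapse to $1$, $2$, and $2-s$ respectively, yielding the exact identity
\[
\|\m{w}^k-\tilde{\m{w}}^k\|_{G_k}^2 = \|\m{x}^k-\m{x}^{k+1}\|_{\C{D}_k}^2 + (2-s)\beta\|\m{r}^{k+1}\|^2 + \beta\|\m{u}\|^2 + 2\beta\langle\m{u},\m{r}^{k+1}\rangle .
\]
Comparing with the claimed bound (\ref{1qaz}), it then remains only to show that $\beta\|\m{u}\|^2+2\beta\langle\m{u},\m{r}^{k+1}\rangle\ge-(1-s)^2\beta\|\m{r}^k\|^2$.

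The main obstacle is that this last inequality is false as pure algebra: $\m{u}$, $\m{r}^{k+1}$, and $\m{r}^k$ are otherwise unconstrained, so one genuinely needs the $\m{y}$-subproblem, and this is exactly the step where the correction term $-(1-s)^2\beta\|\m{r}^k\|^2$ is born. The crucial estimate is $\langle\m{u},\m{r}^{k+1}\rangle\ge(1-s)\langle\m{u},\m{r}^k\rangle$. To obtain it, I would write the optimality condition of the $\m{y}$-subproblem as the inclusion $B\tr(\LAMBDA^k-\beta\m{r}^{k+1})\in\partial(g+\iota_{\C{Y}})(\m{y}^{k+1})$, with $\iota_{\C{Y}}$ the indicator of $\C{Y}$, together with the same inclusion one step earlier, $B\tr(\LAMBDA^{k-1}-\beta\m{r}^k)\in\partial(g+\iota_{\C{Y}})(\m{y}^{k})$. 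Monotonicity of $\partial(g+\iota_{\C{Y}})$ at the pair $(\m{y}^{k+1},\m{y}^k)$ gives $\langle a^{k+1}-a^k,\m{y}^{k+1}-\m{y}^k\rangle\ge0$ for the two subgradients $a^{k+1},a^k$ above; substituting the dual relation $\LAMBDA^k-\LAMBDA^{k-1}=-s\beta\m{r}^k$ and using $B(\m{y}^{k+1}-\m{y}^k)=-\m{u}$ reduces this precisely to $\langle\m{u},\m{r}^{k+1}\rangle\ge(1-s)\langle\m{u},\m{r}^k\rangle$. Equivalently, I could add the variational inequality (\ref{Bill0}) at iterations $k$ and $k-1$, each tested against the other iterate, and simplify using (\ref{lambda_tilde}) and the dual update. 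Either route relies on two consecutive $\m{y}$-subproblems, so the estimate holds for $k\ge1$ (the initial index being handled separately).

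Finally, substituting this estimate and completing the square finishes the proof:
\[
\beta\|\m{u}\|^2 + 2\beta\langle\m{u},\m{r}^{k+1}\rangle \ge \beta\|\m{u}\|^2 + 2(1-s)\beta\langle\m{u},\m{r}^k\rangle = \beta\|\m{u}+(1-s)\m{r}^k\|^2 - (1-s)^2\beta\|\m{r}^k\|^2 \ge -(1-s)^2\beta\|\m{r}^k\|^2 ,
\]
and inserting this into the identity of the second paragraph yields (\ref{1qaz}). I expect the expansions in the first two paragraphs to be routine bookkeeping; the real content is the monotonicity estimate $\langle\m{u},\m{r}^{k+1}\rangle\ge(1-s)\langle\m{u},\m{r}^k\rangle$, which for $s=1$ collapses to the classical $\langle\m{u},\m{r}^{k+1}\rangle\ge0$ and makes the correction term vanish, a useful consistency check.
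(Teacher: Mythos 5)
Your proposal is correct and follows essentially the same route as the paper's proof: you expand the $G_k$-norm via $\LAMBDA^k-\tilde{\LAMBDA}^k=\beta(\m{r}^{k+1}+\m{u})$ to obtain exactly the paper's identity (\ref{G-eq}), and your key estimate $\langle\m{u},\m{r}^{k+1}\rangle\ge(1-s)\langle\m{u},\m{r}^k\rangle$, obtained from monotonicity of $\partial(g+\iota_{\C{Y}})$ at two consecutive iterations combined with the dual update, is precisely the paper's step of adding the optimality condition (\ref{Bill1}) at iterations $k$ and $k-1$ tested against each other's iterate. Your completion of the square is equivalent to the paper's use of the inequality $\m{x}\tr\m{y}\ge-\frac{1}{2}\left[c\|\m{x}\|^2+\frac{1}{c}\|\m{y}\|^2\right]$, and your remark that the argument requires $k\ge1$ is an honest observation that applies to the paper's proof as well.
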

\begin{proof}
By the definition of ${G}_k$ in (\ref{tilde-G}) and direct calculation, we have
\begin{eqnarray*}
\frac{1}{\beta} \left\|\m{w}^k-\tilde{\m{w}}^k\right\|_{{G}_k}^2 & = &
\frac{1}{\beta} \left\|\m{x}^k - \m{x}^{k+1}\right\|_{\C{D}_k}^2
+ (1-s) \left\| B \left( \m{y}^k- \m{y}^{k+1} \right) \right\|^2  + \\
&&  \frac{2(s-1)}{\beta} \left( \LAMBDA^k-\tilde{\LAMBDA}^k \right)
\tr B \left( \m{y}^k- \m{y}^{k+1} \right)
+ \frac{2-s}{\beta^2}  \left\| \LAMBDA^k-\tilde{\LAMBDA}^k \right\|^2.
\end{eqnarray*}
Since $\tilde{\g{\lambda}}^k - \g{\lambda}^k =
\beta(A\m{x}^{k+1} + B\m{y}^{k+1} - \m{b}) + \beta B (\m{y}^{k+1} - \m{y}^k)$,
it follows that
\begin{eqnarray} \label{G-eq}
\frac{1}{\beta } \left\|\m{w}^k-\tilde{\m{w}}^k\right\|_{{G}_k}^2 & =
& \frac{1}{\beta} \left\| \m{x}^k - \m{x}^{k+1}\right\|_{\C{D}_k}^2
 + (2-s) \left\| A\m{x}^{k+1}+B\m{y}^{k+1}-\m{b} \right\|^2 + \nonumber \\
&&  \left\| B \left( \m{y}^k- \m{y}^{k+1} \right) \right\|^2
+ 2 \left( A\m{x}^{k+1}+B\m{y}^{k+1}- \m{b} \right) \tr B
\left( \m{y}^k- \m{y}^{k+1} \right).
\end{eqnarray}
Choosing $\m{y} = \m{y}^k$ in the first-order optimality condition
(\ref{Bill1}), we have
\[
 g(\m{y}^k)- g(\m{y}^{k+1})+ \left\langle B
\left( \m{y}^k-\m{y}^{k+1} \right), - \LAMBDA^{k}
+\beta \left(A\m{x}^{k+1}+B\m{y}^{k+1}-\m{b}\right) \right\rangle\geq  0.
\]
Similarly, choosing $\m{y} = \m{y}^{k+1}$
in the first-order optimality condition (\ref{Bill1})
at the $(k-1)$-th iteration, we have
\[
 g(\m{y}^{k+1})- g(\m{y}^k)+
\left\langle B \left( \m{y}^{k+1}-\m{y}^k \right),
- \LAMBDA^{k-1} +\beta \left(A\m{x}^k+B\m{y}^k-\m{b}\right)
\right\rangle\geq  0.
\]
Adding these two inequalities and substituting
$\LAMBDA^k = \LAMBDA^{k-1} - s \beta \left(A\m{x}^k+B\m{y}^k-\m{b}\right) $,
we have
\begin{eqnarray*}
& &  \left( A\m{x}^{k+1}+B\m{y}^{k+1}- \m{b} \right) \tr B
\left( \m{y}^k- \m{y}^{k+1} \right)   \\
& \ge &  (1-s) \left( A\m{x}^k+B\m{y}^k- \m{b} \right) \tr B
\left( \m{y}^k- \m{y}^{k+1} \right) \\
& \ge& - \frac{1}{2} \left( (1-s)^2 \left\|A\m{x}^k
+B\m{y}^k -\m{b}\right\|^2 +   \left\| B \left( \m{y}^k
- \m{y}^{k+1} \right) \right\|^2  \right),
\end{eqnarray*}
where the last inequality comes from the relation
$\m{x}\tr \m{y}\geq -\frac{1}{2}\left[c\|\m{x}\|^2
+\frac{1}{c}\|\m{y}\|^2\right]$ for any $c > 0$.
Inserting this lower bound for the last term
in (\ref{G-eq}) yields (\ref{1qaz}).
\end{proof}
\section{Convergence Analysis\label{sec322-key}}
In this section, we analyze the convergence properties of AS-ADMM.
The following lemma explores how closely an ergodic average of the iterates
satisfies the first-order optimality condition (\ref{Sec3-1}).

\begin{lemma} \label{Sec3-theore1}
Suppose that for some integers $\kappa \ge 0$ and $T > 0$ and for all
$k \in [\kappa, \kappa+T]$, the following conditions are satisfied:
\begin{itemize}
\item[{\rm (A1)}]
$\C{D}_k \succeq \C{D}_{k+1} \succeq \m{0}$ and
$\mathbb{E}\big[\|\DELTA_t\|_{\C{H}^{-1}}^2\big] \le \sigma^2$
for some $\sigma > 0$,
independent of $t$ and the iteration number $k$, where $\DELTA_t$
is defined in $(\ref{1-sec1-05})$. 
\item[{\rm (A2)}]
$\eta_k \in (0, 1/2\nu]$, where $\nu > 0$ is the Lipschitz constant given
in {\rm (a3)}, and
the sequence $\eta_kM_k(M_k+1)$ is nondecreasing.
\end{itemize}
Then for every $ \m{w} \in \Omega,$ we have
\begin{eqnarray}
& \qquad \mathbb{E}\left[ F(\m{w}_T)
-F(\m{w})+(\m{w}_T-\m{w})\tr \mathcal{J}(\m{w})\right]\leq 
\frac{1}{2(1+T)}\bigg\{\sigma^2\sum\limits_{k=\kappa}^{\kappa+T} \eta_k M_k &
\label{Ex-F} \\
& + \|\m{w}-\m{w}^{\kappa}\|_{Q_{\kappa}}^2 +
\beta (1-s)^2  \left\|A\m{x}^{\kappa}+B\m{y}^{\kappa}-\m{b}\right\|^2
+ \frac{4}{M_{\kappa}(M_{\kappa}+1)\eta_{\kappa}}
\|\m{x}- \m{x}^{{\kappa}}\|_{\C{H}}^2 \bigg\} &\nonumber
\end{eqnarray}
where
\begin{equation}\label{erg-iterate}
\m{w}_T :=\frac{1}{1+T}\sum_{k={\kappa}}^{\kappa + T}\tilde{\m{w}}^{k}.
\end{equation}
\end{lemma}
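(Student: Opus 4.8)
The plan is to begin from the one-step variational bound of Corollary~\ref{key-lemma}. Rewriting (\ref{bjc-39}) with the ``gap'' $F(\tilde{\m{w}}^k)-F(\m{w})+(\tilde{\m{w}}^k-\m{w})\tr\mathcal{J}(\m{w})$ isolated on the left, I would sum over $k=\kappa,\ldots,\kappa+T$. Since $F=f+g$ is convex and $\m{w}_T$ in (\ref{erg-iterate}) is the average of the $\tilde{\m{w}}^k$, Jensen's inequality gives $\sum_k F(\tilde{\m{w}}^k)\ge(1+T)F(\m{w}_T)$, while the term $(\tilde{\m{w}}^k-\m{w})\tr\mathcal{J}(\m{w})$ is affine in $\tilde{\m{w}}^k$ and sums exactly to $(1+T)(\m{w}_T-\m{w})\tr\mathcal{J}(\m{w})$. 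Hence the summed left side dominates $(1+T)$ times the quantity bounded in (\ref{Ex-F}), and the whole task reduces to bounding the summed right side of (\ref{bjc-39}) from above.

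Next I would dispose of the two deterministic quadratic pieces. For the $Q_k$ terms, (A1) gives $\C{D}_k\succeq\C{D}_{k+1}\succeq\m{0}$, and since $Q_k$ in (\ref{tilde-wk-Q}) is block diagonal and differs from $Q_{k+1}$ only in its $\C{D}_k$ block, $Q_k\succeq Q_{k+1}\succeq\m{0}$. Replacing $\|\m{w}-\m{w}^{k+1}\|_{Q_k}^2$ by the smaller $\|\m{w}-\m{w}^{k+1}\|_{Q_{k+1}}^2$ in the piece $\frac{1}{2}(\|\m{w}-\m{w}^k\|_{Q_k}^2-\|\m{w}-\m{w}^{k+1}\|_{Q_k}^2)$ makes the sum telescope, leaving at most $\frac{1}{2}\|\m{w}-\m{w}^{\kappa}\|_{Q_{\kappa}}^2$ after discarding the nonnegative final term. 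For the $G_k$ term, which appears on the right of (\ref{bjc-39}) as $-\frac{1}{2}\|\m{w}^k-\tilde{\m{w}}^k\|_{G_k}^2$, I would invoke Theorem~\ref{Gkterm} and drop the nonnegative $\|\m{x}^k-\m{x}^{k+1}\|_{\C{D}_k}^2$; writing $r_k:=\|A\m{x}^k+B\m{y}^k-\m{b}\|^2$, this bounds the summed term above by $\frac{\beta}{2}\sum_k[-(2-s)r_{k+1}+(1-s)^2 r_k]$, which equals $\frac{\beta}{2}[(1-s)^2 r_\kappa-(1+s-s^2)\sum_{k=\kappa+1}^{\kappa+T}r_k-(2-s)r_{\kappa+T+1}]$. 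The last two bracketed quantities are nonnegative precisely because $2-s>0$ and $1+s-s^2\ge0$, the latter equivalent to $s\le(1+\sqrt{5})/2$; this is exactly where the AS-ADMM stepsize restriction enters, leaving only the $\frac{\beta}{2}(1-s)^2 r_\kappa$ contribution.

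It then remains to control $-\sum_k\zeta^k$, with $\zeta^k$ in (\ref{zeta_k}). The proximal part carries the varying coefficient $a_k:=2/(\eta_k M_k(M_k+1))$, so it does not telescope outright; here I would use summation by parts together with (A2), which makes $\eta_k M_k(M_k+1)$ nondecreasing and hence $a_k$ nonincreasing. Then all correction terms $(a_k-a_{k-1})\|\m{x}-\breve{\m{x}}^k\|_{\C{H}}^2$ are nonpositive, so $\sum_k a_k(\|\m{x}-\breve{\m{x}}^k\|_{\C{H}}^2-\|\m{x}-\breve{\m{x}}^{k+1}\|_{\C{H}}^2)\le a_\kappa\|\m{x}-\breve{\m{x}}^{\kappa}\|_{\C{H}}^2$; since the {\tt xsub} input convention together with the initialization $\breve{\m{x}}^0=\m{x}^0$ gives $\breve{\m{x}}^{\kappa}=\m{x}^{\kappa}$, this yields the $\m{x}^{\kappa}$ term of (\ref{Ex-F}). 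Taking expectation, the cross term $-\sum_k\frac{2}{M_k(M_k+1)}\sum_t t\langle\DELTA_t,\breve{\m{x}}_t-\m{x}\rangle$ vanishes: each $\breve{\m{x}}_t$ is measurable with respect to the $\sigma$-field generated by the draws preceding the $t$-th inner step, whereas $\mathbb{E}[\DELTA_t\mid\mbox{that history}]=\m{0}$ because uniform sampling gives $\mathbb{E}[\nabla f_{\xi_t}(\hat{\m{x}}_t)\mid\cdot]=\nabla f(\hat{\m{x}}_t)$ and $\m{e}_t$ has zero conditional mean. Finally, the variance part is handled by $\mathbb{E}[\|\DELTA_t\|_{\C{H}^{-1}}^2]\le\sigma^2$ from (A1) and $\eta_k\le1/(2\nu)$ from (A2), the latter giving $1/(1-\eta_k\nu)\le2$; combined with $\sum_{t=1}^{M_k}t^2=M_k(M_k+1)(2M_k+1)/6$ and $2M_k+1\le3M_k$, its per-index contribution is at most $\frac{1}{2}\sigma^2\eta_k M_k$, summing to $\frac{1}{2}\sigma^2\sum_k\eta_k M_k$.

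Assembling the four bounds, taking expectation conditioned on the history through iteration $\kappa$ (so $\m{w}^{\kappa}$ and $\breve{\m{x}}^{\kappa}$ are treated as given, which is why the right side of (\ref{Ex-F}) carries no expectation), and dividing by $1+T$ reproduces (\ref{Ex-F}). The main obstacle is the simultaneous bookkeeping of three telescoping arguments with distinct monotonicity hypotheses: the $Q_k$ collapse needs $\C{D}_k\succeq\C{D}_{k+1}$, the constraint-violation collapse needs $1+s-s^2\ge0$, and the proximal collapse needs the summation-by-parts step powered by (A2). I would be most careful about two points. First, the martingale argument annihilating $\langle\DELTA_t,\breve{\m{x}}_t-\m{x}\rangle$ must use the correct conditioning so that $\breve{\m{x}}_t$ is past-measurable. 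Second, the coefficient $4/(M_\kappa(M_\kappa+1)\eta_\kappa)$ is twice the naive $a_\kappa/2$: unlike the $Q_k$ and $G_k$ quadratics, $\zeta^k$ enters (\ref{bjc-39}) with coefficient $1$ rather than $\frac{1}{2}$, so after extracting the global factor $1/(2(1+T))$ the surviving coefficient $a_\kappa$ is doubled.
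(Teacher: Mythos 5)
Your proposal is correct and takes essentially the same route as the paper's own proof: it sums (\ref{bjc-39}), telescopes the $Q_k$ terms via (A1), absorbs the constraint-violation part of the $G_k$ term using $1+s-s^2\ge 0$ (the paper's $\xi_1,\xi_2$ bookkeeping), telescopes the proximal part of $\zeta^k$ via (A2), annihilates the cross term $\langle\DELTA_t,\breve{\m{x}}_t-\m{x}\rangle$ by conditioning on the past draws, and bounds the variance term by $\sigma^2\eta_k M_k/2$, with even the final factor-of-two accounting (the coefficients $4/(M_\kappa(M_\kappa+1)\eta_\kappa)$ and $\sigma^2$ rather than $2/(M_\kappa(M_\kappa+1)\eta_\kappa)$ and $\sigma^2/2$) matching (\ref{Ex-F}). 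The one soft spot, the identification $\breve{\m{x}}^{\kappa}=\m{x}^{\kappa}$ (literally valid only when $\kappa=0$), is glossed over in exactly the same way in the paper's own derivation of (\ref{Sec3-bcz-6}), so it is not a deviation from the paper's argument.
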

\begin{proof}
Since $s \in (0, (1+\sqrt{5})/2]$
and $\beta >0$ in AS-ADMM, we have
\[
\xi_1 := \beta ((2-s) - (1-s)^2)  \ge 0 \quad \mbox{and} \quad
\xi_2 := \beta (1-s)^2 \ge 0.
\]
The inequality (\ref{1qaz}) can be rearranged into the form
\begin{eqnarray}
 \left\|\m{w}^k-\tilde{\m{w}}^k\right\|_{G_k}^2 &\ge&  \left\|\m{x}^k - \m{x}^{k+1}\right\|_{\C{D}_k}^2 +
\xi_1 \left\|A\m{x}^{k+1}+B\m{y}^{k+1}-\m{b}\right\|^2  \label{h1}  \\
&&+ \xi_2 \left( \left\|A\m{x}^{k+1}+B\m{y}^{k+1}-\m{b}\right\|^2  - \left\|A\m{x}^k +B\m{y}^k -\m{b}\right\|^2 \right). \nonumber
\end{eqnarray}
By (A1) and the fact that $s > 0$, it follows that
$Q_k$ in (\ref{tilde-wk-Q}) satisfies
$Q_k \succeq Q_{k+1} \succeq \m{0}$.
Substituting (\ref{h1}) in (\ref{bjc-39}) and utilizing the relation
$Q_k \succeq Q_{k+1}$, { we have}
\begin{eqnarray}
& & F(\tilde{\m{w}}^k) - F(\m{w}) +
( \tilde{\m{w}}^k - \m{w})\tr \mathcal{J}(\m{w}) 
\label{bjc-39-new} \\
& \le &  \frac{1}{2}\left\{\left\|\m{w}-\m{w}^{k}\right\|^2_{Q_{k}}
-\left\|\m{w}-\m{w}^{k+1}\right\|^2_{Q_{k+1}} \right\} \nonumber \\
 && + \frac{\xi_2}{2} \left\{ \left\|A\m{x}^{k}+B\m{y}^{k}-\m{b}\right\|^2
 - \left\|A\m{x}^{k+1} +B\m{y}^{k+1} -\m{b}\right\|^2 \right\} \nonumber \\
& & - \frac{1}{2}
 \left\{ \left\|\m{x}^k - \m{x}^{k+1}\right\|_{\C{D}_k}^2
+ \xi_1 \left\|A\m{x}^{k+1}+B\m{y}^{k+1}-\m{b}\right\|^2 \right\}  -\zeta^k, \nonumber 
\end{eqnarray}
where $\zeta^k$ is defined in (\ref{zeta_k}).

Sum the inequality (\ref{bjc-39-new}) over $k$ between $\kappa$ and $\kappa + T$.
Notice that 
the sum associated with the first two bracketed terms are telescoping
series while the sum associated with the third bracketed expression is
negative and can be neglected. 
Thus by the definition of $\textbf{w}_T$ in (\ref{erg-iterate}), we obtain
\begin{eqnarray}\label{sumover}
&& \sum\limits_{k=\kappa}^{\kappa+T}F(\tilde{\m{w}}^k)-(1+T)\left\{ F(\m{w})
+ \left(  \m{w}_T - \m{w} \right) \tr \mathcal{J}(\m{w})\right\} \\
&\leq& \frac{1}{2} \left\{ \|\m{w}-\m{w}^{\kappa}\|^2_{Q_{\kappa}} +
\xi_2 \left\|A\m{x}^{\kappa}+B\m{y}^{\kappa}-\m{b}\right\|^2  \right\}
-\sum\limits_{k={\kappa}}^{\kappa+T}\zeta^k.\nonumber
\end{eqnarray}
It further follows from the convexity of $F$ that
\begin{equation} \label{Sec3-bjc-6}
F(\m{w}_T)\leq \frac{1}{1+T}\sum_{k=\kappa}^{\kappa+T}F(\tilde{\m{w}}^{k}).
\end{equation}
Dividing (\ref{sumover}) by $T+1$ and utilizing (\ref{Sec3-bjc-6}), we obtain
\begin{eqnarray} \label{Sec3-bjc-5}
&& F(\m{w}_T) -F(\m{w})+ (\m{w}_T-\m{w})\tr \mathcal{J}(\m{w}) \\
&\le & \frac{1}{1+T}\left[ \frac{1}{2} \left\{ \|\m{w}-\m{w}^{\kappa}\|^2_{Q_{\kappa}}
+ \xi_2 \left\|A\m{x}^{\kappa}+B\m{y}^{\kappa}-\m{b}\right\|^2
-\sum\limits_{k={\kappa}}^{{\kappa}+T} \zeta^k \right\} \right]. \nonumber
\end{eqnarray}

Let us now focus on the $\zeta^k$ summation in (\ref{Sec3-bjc-5}).
By assumption (A2),
the sequence $\left\{M_k(M_k+1)\eta_k\right\}$ is nondecreasing for
$k \in [\kappa, \kappa + T]$;
hence, by the telescoping nature of the sum, we have
\begin{eqnarray}
&\sum\limits_{k=\kappa}^{\kappa+T}\frac{2}{M_k(M_k+1)\eta_k}
\left( \|\m{x}-\breve{\m{x}}^{k}\|_{\C{H}}^2-\|\m{x}-
\breve{\m{x}}^{k+1}\|_{\C{H}}^2\right)&
\label{Sec3-bcz-6} \\
&\le \sum\limits_{k=\kappa}^{\kappa+T}
\left(
\frac{2\|\m{x}-\breve{\m{x}}^{k}\|_{\C{H}}^2}
{M_k(M_k+1)\eta_k}
- \frac{2\|\m{x}- \breve{\m{x}}^{k+1}\|_{\C{H}}^2}
{M_{k+1}(M_{k+1}+1)\eta_{k+1}} \right)
\leq \frac{2\|\m{x}- \m{x}^{\kappa}\|_{\C{H}}^2}
{M_\kappa(M_\kappa+1)\eta_\kappa} .& \nonumber
\end{eqnarray}
For $\DELTA_t$  defined in $(\ref{1-sec1-05})$, we have
\[
\DELTA_t=\nabla f(\hat{\m{x}}_{t})-\m{d}_t=
\nabla f(\hat{\m{x}}_{t}) -\nabla f_{\xi_t}(\hat{\m{x}}_t)-\m{e}_t.
\]
Since the random variable $\xi_t \in \{1,2,\ldots, N\}$
is chosen with uniform probability and $\mathbb{E}[\m{e}_t] = \m{0}$,
it follows that $\mathbb{E}[ \DELTA_t]=\m{0}$.
Also, since $\DELTA_t$ only depends on the index $\xi_t$ while
$\breve{\m{x}}_t$ depends on $\xi_{t-1}$, $\xi_{t-2}$, $\ldots$, we have
\[
\mathbb{E} \left[ \langle \DELTA_t, \breve{\m{x}}_{t}-\m{x}\rangle \right] =
\m{0}.
\]
By (A1), we have
$\mathbb{E}(\|\DELTA_t\|_{\C{H}^{-1}}^2) \le \sigma^2$.
Since $M_k \ge 1$, it follows that
\[
\mathbb{E} \left[ \sum_{t=1}^{M_k} t^2 \|\DELTA_t\|_{\C{H}^{-1}}^2 \right] \le
\frac{\sigma^2 M_k (M_k+1)(2M_k + 1)}{6} \le
M_k^2 (M_k+1)\left( \frac{\sigma^2 }{2} \right).
\]
Combining these bounds for the terms in $\zeta^k$ defined in (\ref{zeta_k})
with the condition $\eta_k \le 1/(2\nu)$ in (A2) yields
\[
- \mathbb{E} \left[ \sum_{k=\kappa}^{\kappa+T} \zeta^k \right] \le
\frac{2\|\m{x}- \m{x}^{\kappa}\|_{\C{H}}^2}{M_\kappa(M_\kappa+1)\eta_\kappa}
+ \frac{\sigma^2}{2} \sum_{k=\kappa}^{\kappa+T} \eta_k M_k .
\]
To complete the proof, apply the expectation operator to
(\ref{Sec3-bjc-5}) and substitute this bound for the $\zeta^k$ term.
\end{proof}

Analogous to the definition (\ref{erg-iterate}), we define
\begin{equation} \label{erg-mean-xyz}
\LAMBDA_{T} =\frac{1}{1+T}\sum_{k=\kappa}^{\kappa+T}\tilde{\LAMBDA}^{k},\quad
\textbf{x}_{T} =\frac{1}{1+T}\sum_{k=\kappa}^{\kappa+T}\tilde{\textbf{x}}^{k} \quad\mbox{and} \quad
\textbf{y}_{T} =\frac{1}{1+T}\sum_{k=\kappa}^{\kappa+T}\tilde{\textbf{y}}^{k}.
\end{equation}
Lemma~\ref{Sec3-theore1} yields
a convergence result for AS-ADMM when we make
the following choice for $\eta_k$ and $M_k$ in (\ref{Ex-F}):
\begin{equation}\label{Sec33-12ba2}
\eta_k= \min \left\{ \frac{c_1}{M_k(M_k+1)}, c_2 \right\}
\quad \textrm{and} \quad
M_k=\max\left\{\lceil c_3 k^{\varrho}\rceil, M\right\},
\end{equation}
where $c_1, c_2, c_3 >0$ and
$\varrho \ge 1$ are constants, and $M>0$ is a given integer.
Choose $\kappa$ large enough that
$M_\kappa = \lceil c_3 \kappa^{\varrho}\rceil$.
As $k$ tends to infinity,
$M_k$ tends to infinity and $\eta_k$ tends to zero.
Choose $\kappa$ larger if necessary to ensure that
$\eta_\kappa = c_1/(M_\kappa(M_\kappa+1)) \le 1/(2\nu)$,
where $\nu$ is the Lipschitz constant in (A2).
Since $\eta_k M_k (M_k+1) = c_1$, a constant, and $\eta_k \in (0, 1/2\nu]$
for $k \ge \kappa$, condition (A2) of Lemma~\ref{Sec3-theore1} is satisfied
for this choice of $\kappa$.
\smallskip

\begin{theorem}\label{coll-2}
If {\rm (A1)} of Lemma~$\ref{Sec3-theore1}$ holds for all $k$
and the parameters $\eta_k$
and $M_k$ are chosen according to $(\ref{Sec33-12ba2})$,
then for every $\m{w}^* \in \C{W}^*$,  we have
\begin{equation}\label{Ergodic-rate}
\left| \mathbb{E} \big[ F(\m{w}_T)-F(\m{w}^*)\big]\right| = E_\varrho(T) =
\mathbb{E} \big[\left\|A\m{x}_{T}+B\m{y}_{T}-\m{b} \right\|\big],
\end{equation}
where $E_\varrho(T) = \C{O} (1/T)$ for $\varrho > 1$ and
$E_\varrho(T) = \C{O} (T^{-1}\log T)$ for $\varrho = 1$.
\end{theorem}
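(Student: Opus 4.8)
The plan is to extract three separate estimates by specializing the free point $\m{w}$ in Lemma~\ref{Sec3-theore1} and in the saddle-point inequality (\ref{Sec3-1}), and then assemble them. Write $\m{r}_T := A\m{x}_T + B\m{y}_T - \m{b}$. The computational backbone is the observation that, for any $\m{w} = (\m{x}^*, \m{y}^*, \LAMBDA)$ built from a solution $\m{w}^* = (\m{x}^*, \m{y}^*, \LAMBDA^*) \in \C{W}^*$ and an arbitrary multiplier $\LAMBDA$, the skew term collapses to $(\m{w}_T - \m{w})\tr \mathcal{J}(\m{w}) = -\LAMBDA\tr \m{r}_T$ because $A\m{x}^* + B\m{y}^* - \m{b} = \m{0}$. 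Hence the left side of Lemma~\ref{Sec3-theore1} at such a $\m{w}$ reads $\mathbb{E}[F(\m{w}_T) - F(\m{w}^*) - \LAMBDA\tr \m{r}_T]$, and only the summand $\frac{1}{s\beta}\|\LAMBDA - \LAMBDA^\kappa\|^2$ of $\|\m{w} - \m{w}^\kappa\|_{Q_\kappa}^2$ depends on $\LAMBDA$.

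For the objective error I would produce matching lower and upper bounds. The lower bound is immediate: applying (\ref{Sec3-1}) pointwise at $\m{w} = \m{w}_T$ and taking expectations gives $\mathbb{E}[F(\m{w}_T) - F(\m{w}^*)] \ge (\LAMBDA^*)\tr\mathbb{E}[\m{r}_T]$. The upper bound comes from Lemma~\ref{Sec3-theore1} with $\m{w} = \m{w}^*$, which yields $\mathbb{E}[F(\m{w}_T) - F(\m{w}^*)] - (\LAMBDA^*)\tr\mathbb{E}[\m{r}_T] \le \frac{1}{2(1+T)}\{\cdots\}$. In both, the cross term is controlled by $\|\LAMBDA^*\|\,\mathbb{E}[\|\m{r}_T\|]$, so once the residual is shown to be $\C{O}(E_\varrho(T))$ the two-sided estimate $|\mathbb{E}[F(\m{w}_T) - F(\m{w}^*)]| = \C{O}(E_\varrho(T))$ follows.

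The residual bound is the crux, and the key device is that the inequality behind Lemma~\ref{Sec3-theore1}, namely (\ref{Sec3-bjc-5}), holds \emph{pointwise} (before taking expectation) for every $\m{w} \in \Omega$ simultaneously, with the random quantity $\zeta^k$ — which does not involve $\LAMBDA$ — still present. I would therefore insert the random multiplier $\m{w} = (\m{x}^*, \m{y}^*, \LAMBDA^\kappa - r\,\m{r}_T/\|\m{r}_T\|)$ with a $T$-independent constant $r > 0$. This is legitimate precisely because $\m{x}^*$ and $\m{y}^*$ remain deterministic, so the mean-zero argument $\mathbb{E}[\langle \DELTA_t, \breve{\m{x}}_t - \m{x}^*\rangle] = 0$ used to bound $-\mathbb{E}[\sum_k \zeta^k]$ is untouched. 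The substitution turns $-\LAMBDA\tr\m{r}_T$ into $-(\LAMBDA^\kappa)\tr\m{r}_T + r\|\m{r}_T\|$ and the $\LAMBDA$-dependent part of the bound into $r^2/(s\beta)$; taking expectations, combining with the lower bound in the form $-\mathbb{E}[F(\m{w}_T) - F(\m{w}^*)] \le -(\LAMBDA^*)\tr\mathbb{E}[\m{r}_T]$, and using $(\LAMBDA^\kappa - \LAMBDA^*)\tr\mathbb{E}[\m{r}_T] \le \|\LAMBDA^\kappa - \LAMBDA^*\|\,\mathbb{E}[\|\m{r}_T\|]$ (Jensen) leaves $r\,\mathbb{E}[\|\m{r}_T\|] \le \|\LAMBDA^\kappa - \LAMBDA^*\|\,\mathbb{E}[\|\m{r}_T\|] + \frac{1}{2(1+T)}\{\cdots\}$. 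Choosing $r = \|\LAMBDA^\kappa - \LAMBDA^*\| + 1$ absorbs the residual term on the right and isolates $\mathbb{E}[\|\m{r}_T\|]$.

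Finally I would evaluate the only $T$-dependent piece of every bound, the sum $\frac{1}{1+T}\sum_{k=\kappa}^{\kappa+T}\eta_k M_k$. With the choice (\ref{Sec33-12ba2}), for $k \ge \kappa$ one has $\eta_k M_k = c_1/(M_k+1) \le (c_1/c_3)\,k^{-\varrho}$, so the sum is bounded by a convergent series when $\varrho > 1$, giving $\C{O}(1/T)$, and grows like $\log T$ when $\varrho = 1$, giving $\C{O}(T^{-1}\log T)$; every remaining term in the braces is a $\kappa$-dependent constant independent of $T$. This identifies $E_\varrho(T)$ and delivers both estimates in (\ref{Ergodic-rate}). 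The main obstacle is the residual step: one must recognize that the pre-expectation inequality may be evaluated at a random multiplier without spoiling the variance bookkeeping, since $\zeta^k$ is independent of the $\LAMBDA$-coordinate and the primal coordinates are held fixed at the deterministic solution.
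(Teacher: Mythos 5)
Your proposal is correct and follows essentially the same route as the paper's proof: both specialize the bound of Lemma~\ref{Sec3-theore1} at $\m{w}=\m{w}^*$ and at a multiplier-perturbed point $(\m{x}^*,\m{y}^*,\LAMBDA)$, invoke the saddle-point inequality (\ref{Sec3-1}) at $\m{w}=\m{w}_T$ for the matching lower bounds, and estimate $\sum_k \eta_k M_k$ exactly as you do to identify $E_\varrho(T)$. The only divergence is bookkeeping: the paper perturbs by a unit vector about $\LAMBDA^*$, so the variational inequality cancels the cross term with coefficient one, whereas you center at $\LAMBDA^\kappa$ with radius $r=\|\LAMBDA^\kappa-\LAMBDA^*\|+1$ and absorb; notably, your explicit argument that the pre-expectation inequality (\ref{Sec3-bjc-5}) may be evaluated at a multiplier depending on the random residual --- because $\zeta^k$ involves only the deterministic $\m{x}$-coordinate --- is precisely the justification the paper needs for its claim (\ref{bill87}) but leaves implicit (its printed defining relation for $\bar{\LAMBDA}$ equates a scalar with a vector, and a deterministic unit vector would only yield $\|\mathbb{E}[A\m{x}_T+B\m{y}_T-\m{b}]\|$ rather than $\mathbb{E}\big[\|A\m{x}_T+B\m{y}_T-\m{b}\|\big]$).
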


\begin{proof}
Suppose that $\kappa$ is chosen by the procedure explained beneath
(\ref{Sec33-12ba2}), which ensures that
condition (A2) of Lemma~\ref{Sec3-theore1} is satisfied for all
$k \ge \kappa$.
By assumption, (A1) holds.
Hence, the conclusion (\ref{Ex-F}) of Lemma~\ref{Sec3-theore1} holds.

First, let us analyze the left side of (\ref{Ex-F}).
By the definition of $\C{J}$ (see (\ref{vector-w-v})), it follows that
\begin{equation}\label{bill11}
(\m{w}_T-\m{w})\tr \mathcal{J}(\m{w}) =
\LAMBDA_T\tr\left(A\m{x}+B\m{y}-\m{b}\right)
-\LAMBDA\tr\left(A\m{x}_{T}+B\m{y}_{T}-\m{b}\right).
\end{equation}
For any $\m{w}^* = (\m{x}^*, \m{y}^*, \g{\lambda}^*) \in \C{W}^*$,
let us choose $\m{w} = (\m{x}^*, \m{y}^*, \g{\lambda})$,
where $\g{\lambda} = \g{\lambda}^* + \bar{\LAMBDA}$ and
$\bar{\LAMBDA}$ is a unit vector chosen so that
\[
(\bar{\LAMBDA})\tr \mathbb{E} \big[ A\m{x}_{T}+B\m{y}_{T}-\m{b}\big] =
- \mathbb{E} \big[ A\m{x}_{T}+B\m{y}_{T}-\m{b}\big] .
\]
Since $A\m{x}^*+B\m{y}^*=\m{b}$, this choice for $\m{w}$
in (\ref{bill11}) yields
\begin{equation}\label{bill87}
\mathbb{E}\big[(\m{w}_T-\m{w})\tr \mathcal{J}(\m{w})\big] =
\mathbb{E} \big[\|A\m{x}_{T}+B\m{y}_{T}-\m{b}\|
-(\LAMBDA^*)\tr(A\m{x}_{T}+B\m{y}_{T}-\m{b}) \big]
\end{equation}
Since $F(\m{w}) = F(\m{w}^*)$ when
$\m{w} = (\m{x}^*, \m{y}^*, \g{\lambda})$, (\ref{bill87}) yields
\begin{eqnarray}
&\mathbb{E}\big[F(\m{w}_T) - F(\m{w}) + (\m{w}_T-\m{w})\tr \mathcal{J}(\m{w})\big]
=& \label{bill86} \\
&\mathbb{E} \big[F(\m{w}_T) - F(\m{w}^*)
-(\LAMBDA^*)\tr(A\m{x}_{T}+B\m{y}_{T}-\m{b})
+ \|A\m{x}_{T}+B\m{y}_{T}-\m{b}\| \big] .
& \nonumber
\end{eqnarray}
By the variational inequality (\ref{Sec3-1}) with $\m{w} = \m{w}_T$, we have
\begin{eqnarray}
&F(\m{w}_T)- F(\m{w}^*) +(\m{w}_T -\m{w}^*)\tr \mathcal{J}(\m{w}^*) =&
\label{1st} \\
&F(\m{w}_T)- F(\m{w}^*)
-(\LAMBDA^*)\tr\left[A\m{x}_{T}+B\m{y}_{T}-\m{b}\right] \ge 0.& \nonumber
\end{eqnarray}
Use this inequality in (\ref{bill86}) to obtain the lower bound
\begin{equation}\label{bill88}
\mathbb{E}\big[F(\m{w}_T) - F(\m{w})
+ (\m{w}_T-\m{w})\tr \mathcal{J}(\m{w})\big] \ge
\mathbb{E}\big[\|A\m{x}_{T}+B\m{y}_{T}-\m{b}\|\big]
\end{equation}
for $\m{w} = (\m{x}^*, \m{y}^*, \g{\lambda})$.

Next, let us analyze the right side of (\ref{Ex-F}).
By the choice (\ref{Sec33-12ba2}) for $\eta_k$ and $M_k$, we see that
\[
\sum_{k = \kappa}^{\kappa + T} \eta_k M_k \le
\sum_{k = \kappa}^{\kappa + T}
c_1/(1 + c_3 k^\varrho ) .
\]
This sum is $\C{O}(1)$ if $\varrho > 1$, while it is $\C{O}(\log T)$
if $\varrho = 1$.
Since $\kappa$ was chosen so that $\eta_\kappa M_\kappa(M_\kappa+1) = c_1$,
it follows that the other terms in brackets on the right side of
(\ref{Ex-F}) are all $\C{O}(1)$ for
$\m{w} = (\m{x}^*, \m{y}^*, \g{\lambda})$
when $\g{\lambda} = \g{\lambda}^* + \bar{\LAMBDA}$ and
$\bar{\LAMBDA}$ is a unit vector.
Consequently, we have
\[
\mathbb{E}\big[F(\m{w}_T) - F(\m{w})
+ (\m{w}_T-\m{w})\tr \mathcal{J}(\m{w})\big] = E_\varrho (T).
\]
We combine this upper bound $E_\varrho (T)$ with
the lower bound (\ref{bill88}) to obtain
\begin{equation}\label{bill90}
\mathbb{E} \big[ \|A\m{x}_{T}+B\m{y}_{T}-\m{b}\| \big] = E_\varrho (T).
\end{equation}
This establishes the right side of (\ref{Ergodic-rate}).

The optimality condition (\ref{1st}) implies that
\begin{equation}\label{Elower}
\mathbb{E} \big[F(\m{w}_T)- F(\m{w}^*)\big] \ge
- \mathbb{E}\big[\|\LAMBDA^*\| \, \|A\m{x}_{T}+B\m{y}_{T}-\m{b}\|\big] =
-E_\varrho(T) .
\end{equation}
When $\m{w} = \m{w}^*$ in (\ref{Ex-F}), the right side of this inequality
is again $E_\varrho (T)$, so we have
\begin{equation}\label{Eupper}
\mathbb{E} \big[ F(\m{w}_T)- F(\m{w}^*)
+(\m{w}_T -\m{w}^*)\tr \mathcal{J}(\m{w}^*)\big] \le E_\varrho (T) .
\end{equation}
Together, (\ref{bill90}) and the equality in (\ref{1st}) yield
\begin{eqnarray}
&\mathbb{E} \big[ F(\m{w}_T)- F(\m{w}^*)
+(\m{w}_T -\m{w}^*)\tr \mathcal{J}(\m{w}^*)\big] \ge& \label{bill89} \\
&\mathbb{E} \big[ F(\m{w}_T)- F(\m{w}^*) \big]
- \mathbb{E} \big[ \|A\m{x}_{T}+B\m{y}_{T}-\m{b} \|\big] \|\LAMBDA^*\| =
\mathbb{E} \big[ F(\m{w}_T)- F(\m{w}^*) \big] - E_\varrho (T). \nonumber
\end{eqnarray}
The bound (\ref{Eupper}) and (\ref{bill89}) imply that
$\mathbb{E} \big[ F(\m{w}_T)- F(\m{w}^*)\big] \le  E_\varrho (T)$.
Combine this upper bound with the lower bound (\ref{Elower})
to obtain
$\left|\mathbb{E} \big[ F(\m{w}_T)- F(\m{w}^*)\big] \right| =  E_\varrho (T)$.
This establishes the left side of (\ref{Ergodic-rate}), which completes the
proof.
\end{proof}
\bigskip

We now have the following remarks.
\begin{remark}\label{remar1}
The objective error and the constraint violation converge to
zero in expectation due to $(\ref{Ergodic-rate})$;
however, this does not imply the convergence or boundedness of the
ergodic iterates.
If there exists $c > 0$ such that $\C{D}_k \succeq c \m{I}$,
then the iterates $(\m{x}_k, \m{y}_k, \g{\lambda}_k)$ are bounded
in expectation, and under a strong convexity assumption,
the ergodic iterates converge in expection (see Appendix).
\end{remark}

\begin{remark}
\label{remxx}
In AS-ADMM, it was required that
$\C{D}_k = \C{M}_k - \beta A \tr A  \succeq \m{0}$,
however, in Lemma~$\ref{Sec3-theore1}$, the proof only requires that
$\left\| \m{x}^{k+1} - \m{x}^k  \right\|_{\C{D}_k} \ge 0$
so that the third bracketed expression in $(\ref{bjc-39-new})$ can be
dropped while preserving the inequality.
Hence, for numerical efficiency, at any iteration $k$,
we could set $\C{M}_k = \rho_k \m{I}$ and  then adjust $\rho_k$
based on an underestimate $\beta \delta_2^k/\delta_1^k$
for the largest eigenvalue of $\beta A \tr A$, where
\[
\delta_1^k = \left\| \m{x}^k - \m{x}^{k-1}  \right\|^2 \quad \mbox{and} \quad
\delta_2^k = \left\| A (\m{x}^k - \m{x}^{k-1})  \right\|^2 .
\]
In particular,
given parameters $\rho_0$ and $\rho_{\min} > 0$, and $\eta > 1$,
we multiply $\rho_{\min}$ by $\eta$ in any iteration where
$ \rho_{k-1} < \beta \delta_2^k/\delta_1^k$, and
in each iteration, we set
\[
 \rho_k = \max \{ \rho_{\min}, \; \beta \delta_2^k/\delta_1^k \}.
\]
The increase in $\rho_{\min}$ can only happen a finite number of times
since $\rho_{k-1} \ge \beta \delta_2^k/\delta_1^k$
whenever $\rho_{k-1} \ge \beta  \|A \tr A\|$;
in fact, the increase in $\rho_{\min}$ can happen
at most $\lceil \log_{\eta} \frac{ \beta \|A \tr A\|}{ \rho_0} \rceil$ times.
Hence, for $k$ large enough, $\rho_k$, $\C{M}_k$, and $\C{D}_k$
are all unchanged, and
$\left\| \m{x}^{k+1} - \m{x}^k  \right\|_{\C{D}_k}^2$ $\ge 0$.
Related techniques were first used in $\cite{chy13}$ in the context of
a line search.
\end{remark}

\begin{remark}
Lemma $\ref{Sec3-theore1}$ holds under the  assumption that
$\left\{ \eta_kM_k(M_k+1)\right\}$ is nondecreasing.
We now point out that Lemma $\ref{Sec3-theore1}$
can be reformulated so as to hold when
$\left\{ \eta_kM_k(M_k+1)\right\}$ is {\it nonincreasing}
if $\C{X}$ is a bounded set.
Let $\C{N}_{\C{X}}$ denote the diameter of $\C{X}$:
\begin{equation}\label{CX}
\C{N}_{\C{X}} = \sup \{ \|\m{x}_1 - \m{x}_2 \|_{\C{H}}:
\m{x}_1, \m{x}_2 \in \C{X} \}.
\end{equation}
If $\C{N}_{\C{X}}$ is finite and
$\left\{ \eta_kM_k(M_k+1)\right\}$ is nonincreasing for
$k \in [\kappa, \kappa+T]$, then the term
$(\ref{Sec3-bcz-6})$ in the proof of Lemma $\ref{Sec3-theore1}$ has the
following bound: For any $\m{x} \in \C{X}$, we have
\begin{eqnarray}\label{Dec-M}
&& \sum\limits_{k=\kappa}^{\kappa+T}
\frac{2}{M_k(M_k+1)\eta_k}
\left( \|\m{x}-\breve{\m{x}}^{k}\|_{\C{H}}^2
-\|\m{x}- \breve{\m{x}}^{k+1}\|_{\C{H}}^2\right)\\
&=& \frac{2}{M_\kappa(M_\kappa+1)\eta_\kappa}
\|\m{x}- \m{x}^{\kappa}\|_{\C{H}}^2 - \frac{2}{M_T(M_T+1)\eta_T}\|\m{x}
- \breve{\m{x}}^{T+1}\|_{\C{H}}^2\nonumber\\
&&
- \sum\limits_{k=\kappa}^{\kappa + T-1}
\left(\frac{2}{M_k(M_k+1)\eta_k}
-\frac{2}{M_{k+1}(M_{k+1}+1)\eta_{k+1}}\right)
\|\m{x}- \breve{\m{x}}^{k+1}\|_{\C{H}}^2\nonumber \\
&\leq& \frac{2}{M_\kappa(M_\kappa+1)\eta_\kappa}
\mathcal{N}_{\mathcal{X}}^2
- \sum\limits_{k=\kappa}^{\kappa + T-1}\left(\frac{2}{M_k(M_k+1)\eta_k}
-\frac{2}{M_{k+1}(M_{k+1}+1)\eta_{k+1}}\right)
\mathcal{N}_{\mathcal{X}}^2\nonumber \\
&=& \frac{2}{M_T (M_T+1)\eta_T }\mathcal{N}_{\mathcal{X}}^2. \nonumber
\end{eqnarray}
By using $(\ref{Dec-M})$, a bound similar to $(\ref{Ex-F})$
in Lemma~$\ref{Sec3-theore1}$ can be established.
\end{remark}

\begin{remark}
If $N = 1$ and $\m{e}_t = \m{0}$, then AS-ADMM
is a deterministic ADMM with multiple accelerated gradient steps
to solve the $\m{x}$-subproblem inexactly, and the expectation operator
can be removed from $(\ref{Ergodic-rate})$.
If additional assumptions hold, such as $s \in (0, (1+\sqrt{5})/2)$
(the open interval), $\C{D}_k \succeq c\m{I}$ for some
$c > 0$, and $B$ has full column rank, then the iterates $\m{w}^k$ are
uniformly bounded and convergent to some $\m{w}^* \in \C{W}^*$.
\end{remark}

\section{Incremental Sampling of Stochastic Gradient with Variance
Reduction\label{extension:incremental}}
In this section, we discuss AS-ADMM algorithm with incremental sampling of
the stochastic gradient.
These techniques can potentially reduce the number of stochastic gradient
steps and can be beneficial when the subproblems for computing the
stochastic gradient step is expensive.
Suppose that at the $t$-th inner iteration of subroutine $\m{xsub}$
in the $k$-th outer iteration of AS-ADMM,
when calculating the stochastic gradient of function $f$,
we randomly select an index sample set
\[
U_t \subset\{1,2,\cdots,N\} \quad \textrm{of size} \quad |U_t|=m_k\leq N
\]
with uniform probability.
We define
\[
\hat{\m{g}}_t =   \frac{1}{m_k}\sum\limits_{i \in U_t } \nabla f_{i}(\hat{\m{x}}_t) \quad \mbox{and} \quad 
\m{e}_t = \nabla f( { \bar{\m{x}}^k })
- \frac{1}{m_k}\sum\limits_{i \in U_t } \nabla f_{i}(\bar{\m{x}}^k)
\]
for some choice of $\bar{\m{x}}^k \in \Omega$.
Since the elements of $U_t$ are chosen with uniform probability,
$\mathbb{E}\big[ \m{e}_t \big] = \m{0}$.
Also, we define
\begin{equation}\label{Sec33-bj1}
\m{d}_t= \hat{\m{g}}_t + \m{e}_t
= \frac{1}{m_k}\sum\limits_{i \in U_t }
\left[\nabla f_{i}(\hat{\m{x}}_{t})-\nabla f_{i}(\bar{\m{x}}^k)\right]
+ \nabla f(\bar{\m{x}}^k),
\end{equation}
and $\DELTA_t = \nabla f(\hat{\m{x}}_t) - \m{d}_t$.
Again, since $U_t$ is chosen with uniform probability,
we have $\mathbb{E}[ \DELTA_t ] = \m{0}$.
Moreover, if the diameter $\C{N}_\chi$ of $\C{X}$,
defined in (\ref{CX}), is finite, then
the variance of $\DELTA_t$ has the following bound:
\begin{eqnarray}
\mathbb{E} \left[ \|\DELTA_t\|_{\C{H}^{-1}}^2 \right] &=&
\mathbb{E}\left[ \left\|
\frac{1}{m_k}\sum\limits_{i \in U_t }\left[\nabla f_{i}(\bar{\m{x}}_{t})
-\nabla f_{i}(\hat{\m{x}}^k)\right]+
\nabla f(\hat{\m{x}}^k) - \nabla f(\bar{\m{x}}_t) \right\|_{\C{H}^{-1}}^2
\right] \nonumber\\
&= & \frac{N-m_k}{m_k(N-1)} \mathbb{E}_{\xi}\left[ \left\|
\nabla f_{\xi}(\bar{\m{x}}^k)-\nabla f_{\xi}(\hat{\m{x}}_{t})-
[\nabla f(\bar{\m{x}}^k)-\nabla f(\hat{\m{x}}_{t})]\right\|_{\C{H}^{-1}}^2 \right] \nonumber\\
&=& \frac{N-m_k}{m_k(N-1)} \left\{\mathbb{E}_{\xi}\left[ \left\|\nabla f_{\xi}(\bar{\m{x}}^k)-\nabla f_{\xi}(\hat{\m{x}}_{t})\right\|_{\C{H}^{-1}}^2\right]- \left\|\nabla f(\bar{\m{x}}^k)-\nabla f(\hat{\m{x}}_{t})\right\|_{\C{H}^{-1}}^2 \right\} \nonumber\\
&\le& \frac{1}{m_k}\mathbb{E}_{\xi}\left[ \left\|\nabla f_{\xi}(\bar{\m{x}}^k)-\nabla f_{\xi}(\hat{\m{x}}_{t})\right\|_{\C{H}^{-1}}^2\right] =  \frac{1}{m_k N} \sum\limits^{N}_{j=1}\left\|\nabla f_{j}(\bar{\m{x}}^k)-\nabla f_{j}(\hat{\m{x}}_{t})\right\|_{\C{H}^{-1}}^2 \nonumber\\
&\leq&
\frac{1}{m_k N} \sum\limits^{N}_{j=1}\nu^2
\left\| \bar{\m{x}}^k-\hat{\m{x}}_{t}\right\|_{\C{H}}^2
= \frac{\nu^2}{m_k}\left\| \bar{\m{x}}^k-\hat{\m{x}}_{t}\right\|_{\C{H}}^2 
\le \frac{\nu^2 \mathcal{N}_{\mathcal{X}}^2}{m_k}, \label{mk-variance}
\end{eqnarray}
where the second equality 
follows from \cite[Page 183]{Freund1962},
$\mathbb{E}_{\xi}[\cdot]$ is taken with respect to 
a random drawing of $\xi \in \{1,2,\ldots,N\}$ with uniform probability,
and $\nu$ is the Lipschitz constant for the $f_j$ given in
{\rm (a3)}.
Consequently, $\mathbb{E}[\|\DELTA_t\|_{\C{H}^{-1}}^2] \le \sigma^2$
with $\sigma =  \nu \mathcal{N}_{\mathcal{X}}/\sqrt{m_k}$.

On the other hand, if we obtain information during the computation
by choosing $\bar{\m{x}}^k$ such that
$\|\bar{\m{x}}^k - \hat{\m{x}}_t\|$ is small,
then we see from (\ref{mk-variance}) that
the variance of $\DELTA_t$ could be reduced significantly.
Note that the full gradient $\nabla f(\bar{\m{x}}^k) $ is 
only calculated in the outer iteration.
In our numerical experiments, we choose $\bar{\m{x}}^k$
to be the ergodic mean of the iterates at certain iterations.
Furthermore, under the conditions of Lemma \ref{Sec3-theore1},
we can show that
\begin{eqnarray}\label{Sec33-bj3}
&\mathbb{E}\left[ F(\m{w}_T)-F(\m{w})+(\m{w}_T-\m{w})\tr
\mathcal{J}(\m{w})\right]  \le \frac{1}{2 (1+T)} \bigg\{
(\nu\mathcal{N}_{\mathcal{X}})^2\sum\limits_{k=\kappa}^{\kappa+T}
\frac{\eta_k M_k}{m_k} &
\nonumber\\
&+ \|\m{w}-\m{w}^\kappa\|_{Q}^2
+ \xi_2 \left\| A \m{x}^\kappa + B \m{y}^\kappa
-\m{b} \right\|^2  + \frac{4}{M_\kappa(M_\kappa+1)\eta_\kappa} \|\m{x}
- \m{x}^{\kappa}\|_{\C{H}}^2 \bigg\},& \nonumber
  \end{eqnarray}
where $\xi_2 = \beta (1-s)^2$.

Suppose we choose the parameters
\begin{equation}\label{Sec33-bj4}
\eta_k= \eta \in \left(0, \frac{1}{2 \nu} \right], \quad M_k = M
\quad \mbox{and} \quad
m_k=\min\left\{  \lceil c (1+k)^{ \varrho } \rceil, N \right\},
\end{equation}
where $M \ge 1$ is an integer and $c >0$ and $\varrho \ge 1$ are real
scalars.
In the case that the total data size $N$ is large, with $m_k < N$,
we can deduce from (\ref{Sec33-bj3}) that
\begin{equation}\label{Sec33-bj33}
\mathbb{E}\left[ F(\m{w}_T)-F(\m{w})+(\m{w}_T-\m{w})\tr
\mathcal{J}(\m{w})\right] =
\C{O} \left(\frac{1}{T} \left(1+ \sum_{k=0}^{T}
\frac{1}{(1+k)^{\varrho}} \right) \right).
\end{equation}
Hence, the convergence rate with the incremental sampling of the
stochastic gradient will be the same
as the rate (\ref{Ergodic-rate}) of AS-ADMM
with parameter setting (\ref{Sec33-12ba2}).

In addition, it can be observed that with the parameter
settings (\ref{Sec33-bj4}), the total number of sample
gradients used in the inner iteration when $N$ is large and $m_k < N$
is given by
\[
\sum\limits_{k=0}^{T} M_k m_k = \mathcal{O} \left(\sum\limits_{k=0}^{T}(1+k)^{ \varrho } \right),
\]
which is on the same order as that of AS-ADMM
with parameter settings (\ref{Sec33-12ba2}).
However, the stepsize parameter $\eta_k$ in (\ref{Sec33-bj4}) can be  larger
than that in (\ref{Sec33-12ba2}),
and the total number of stochastic gradient steps performed in AS-ADMM is
\[
\sum\limits_{k=0}^{T}M_k  = M T,
\]
which  can be significantly smaller than the total number of
stochastic gradient steps
$ \mathcal{O} \left( \sum\limits_{k=0}^{T}k^{ \varrho } \right)$
performed by AS-ADMM with parameter settings (\ref{Sec33-12ba2});
this would greatly reduce the computational cost in the case that
the subproblem for calculating the stochastic gradient step is expensive.
\section{Linearized AS-ADMM\label{extension:linearized}} 
When $B$ is a relatively complicated matrix,
a closed-form solution of the $\m{y}$-subproblem may not exist,
even when $g$ is simple.
A common approach, in this case, is to modify
the $\m{y}$-subproblem by linearizing its quadratic
penalty term so that a closed-form solution may exist, similar to what
is done in {\tt xsub} for the $\m{x}$-subproblem.
The corresponding proximal term is
\[
\frac{1}{2}\left\|\m{y}-\m{y}^k\right\|_{\tau \m{I} - \beta B \tr B}^2,
\]
where $\tau > 0$ is large enough that $\tau \m{I} \succeq \beta B \tr B$. 
This proximal term, when added to the penalty term in the $\m{y}$-subproblem,
will annihilate the penalty term $(\beta/2)\|B\m{y}\|^2$.
For $\tau > 0$,
the $\m{y}$-subproblem reduces to the following proximal mapping:
\[
\m{y}^{k+1}=\m{prox}_{g, \tau}(\m{q}^k):=\arg\min\limits_{\m{y}\in\C{Y}}
\left\{ g(\m{y})+ (\tau/2) \| \m{y}- \m{q}^k\|^2 \right\},
\]
where $\m{q}^k = \m{y}^k - B\tr [
\beta (A\m{x}^{k+1} +B\m{y}^k -\m{b})- \LAMBDA^k ]/\tau$.
The complexity analysis when the $\m{y}$-subproblem is linearized is
the same as that of the original AS-ADMM given in Theorem~\ref{coll-2}
for appropriate choices of the parameters.
It may be possible to relax the constraint
$\tau \m{I} \succeq \beta B \tr B$ using ideas from \cite{CWHLv19, Tao2020}.

\section{Numerical Experiments}\label{sec-experiments}
This section provides numerical experiments to investigate the performance
of AS-ADMM.
\subsection{Test problem and parameter settings\label{tp}}
Given a number of training samples $\{(\m{a}_j,b_j)\}_{j=1}^{N}$
where $\m{a}_j\in \mathcal{R}^{l}$ and 
$b_j\in\{-1, 1\}$,  we solve the following  generalized lasso problem
(called the Graph-Guided Fused Lasso model):
\[
\min\limits_{\mathbf{x}}\frac{1}{N}\sum\limits_{j=1}^{N}f_j(\mathbf{x})
+\mu\|A\mathbf{x}\|_1,
\]
where  $f_j(\mathbf{x})=\log\left(1+\exp(-b_i \m{a}_i\tr \mathbf{x})\right)$
denotes the logistic loss function on the feature-label pair
$(\m{a}_j,b_j)$, $N$ is the data size (usually large),
$\mu>0$ is a given  regularization parameter,  
and $A = \m{I}$ or  $A=[\mathbf{G};\mathbf{I}]$,
where $\mathbf{G}$ is obtained from a sparse inverse covariance estimation
given in \cite{FHTR08}. 
Although the generalized lasso problem is used to compare the ADMM algorithms,
this specific problem is potentially solved more efficiently using a stochastic
primal-dual algorithm such as the one developed in \cite{Chambolle2018}.

By introducing an auxiliary variable $\mathbf{y}$,
the above problem can be reduced to a special case of problem (\ref{P}):
\begin{equation} \label{Sec5-prob}
\min\limits_{\mathbf{x,y}} \; \{F(\mathbf{x},\mathbf{y})=
\frac{1}{N}\sum\limits_{j=1}^{N}f_j(\mathbf{x})+\mu\|\mathbf{y}\|_1 :
A\mathbf{x}-\mathbf{y}=\mathbf{0} \}.
\end{equation}
We use AS-ADMM to solve (\ref{Sec5-prob});
the closed-form solutions of the subproblems are
\begin{equation}  \label{Sec5-bjc}
\left \{\begin{array}{lll}
\breve{\m{x}}_{t+1}&=&\left[\gamma_t \C{H}
+ \C{M}_k\right]^{-1}\left[\gamma_t \C{H}\breve{\m{x}}_{t}
+ \C{M}_k \mathbf{x}^k - \m{d}_t- \m{h}^k\right],\\
\mathbf{y}^{k+1}&=&\textrm{Shrink}\left(\frac{\mu}{\beta},  A\mathbf{x}^{k+1}-\frac{\g{\lambda}^k}{\beta}\right).
\end{array}\right.
\end{equation}
Here, $\textrm{Shrink}(\cdot,\cdot)$ denotes the
so-called soft shrinkage operator, which can be evaluated using the
built-in MATLAB function ``{\tt wthresh}''.

\renewcommand\figurename{Table}
\begin{figure}[h]
\begin{center}
\begin{center}
\caption {Real-world datasets and regularization parameters
used in the experiments \label{tab4.1}}
\end{center}
\begin{tabular}{|lcccccc|}
\hline
dataset && number of samples && dimensionality  && $\mu$ \\
\hline
{\tt a9a}    &&   32,561  &&  123  &&    1e-5\\
{\tt ijcnn1} &&   49,990  &&  23   &&      1e-5   \\
{\tt w8a}    &&   49,749 &&   300      &&     1e-5    \\
{\tt mnist}  &&   11,791 &&  784   &&   1e-5    \\
\hline
\end{tabular}
\end{center}
\end{figure}
\renewcommand\figurename{Fig.}

The datasets of Table~\ref{tab4.1}
and the Lipschitz constants of $f$ are taken from the LIBSVM website. 
The parameter settings used in AS-ADMM are as follows.
The stepsize $s$ is taken as $s = 1.618$ (approximately its largest value),
the penalty parameter is $\beta = 0.04$, and
the values of $\eta_k$ and $M_k$ are given by
(\ref{Sec33-12ba2}) with $c_1 = 1/\nu$, $c_2 = 1/(2\nu)$,
$c_3 = 0.01$, $\varrho = 1.1$, and $M = 200$.
The choice for $c_2$ ensures that the condition $\eta_k \in (0, 1/2\nu]$
of Lemma~\ref{Sec3-theore1} is satisfied from the start of the iterations,
while $c_1$ was chosen so that it scaled in the same way as $c_2$.
A small value was used for $c_3$ so that the growth of $M_k$ would be delayed,
and $M = 200$ since $N$ is on the order of tens of thousands, and we wanted
at least several hundred inner iterations.
The matrices $ \C{M}_k$ are updated adaptively by the strategy
in Remark~\ref{remxx} with initial values $\rho_0 = 1$, $\eta = 1.1$,
$\rho_{\min} = 10^{-5}$;
these were the same parameter values that seemed to work well in
\cite{HagerZhang19,HagerZhang19b} when we solved image reconstruction
problems.
In particular, $\eta = 1.1$ so that the lower bound $\rho_{\min}$
for the largest eigenvalue would grow slowly.
We set $\mathcal{H}:=\sigma\mathbf{I}$ with $\sigma = 2\times 10^{-5}$.
Thus both $\C{M}_k$ and $\C{H}$ are diagonal matrices.
We set the regularization parameter $\mu = 10^{-5}$ since
ASVRG-ADMM set all the regularization parameters to $10^{-5}$.
We found that it is expensive and unnecessary to calculate
one full gradient at each outer iteration for reducing the variance
of the stochastic gradient.
Hence, in numerical experiments, we only do the variance reduction
when the number of inner iterations $M_k$ is larger than the dimension of
the $\m{x}$-variable.
More precisely, at the $k$-th outer iteration of AS-ADMM,
in the $t$-th inner iteration of subroutine $\m{xsub}$, we set 
\[
\m{e}_t =
\left \{\begin{array}{ll}
\nabla f(\m{x}_{k-1}) - \nabla f_{\xi_t} (\m{x}_{k-1})
& \mbox{if } ~ M_k >  n_1, \\
\m{0}, & \mbox{otherwise},
\end{array}\right.
\]
where $\m{x}_k$ is the ergodic mean of the $\m{x}$-iterates.
All comparison algorithms are implemented in MATLAB R2018a (64-bit)
with the same starting point
$(\mathbf{x}^0,\mathbf{y}^0,\LAMBDA^0)=(\mathbf{0},\mathbf{0},\mathbf{0})$,
and all experiments are performed on a PC with Windows 10 operating system,
with an Intel i7-8700K CPU, and with 16GB RAM.

\subsection{Comparative Experiments}
In this section, we compare the following algorithms for solving
problem (\ref{Sec5-prob}) using the four data sets of Table~\ref{tab4.1}:
\begin{itemize}
\item
Accelerated stochastic ADMM, Algorithm \ref{algo1} (\textbf{AS-ADMM}).
\item
Stochastic ADMM (\cite{Ouyang13}, \textbf{STOC-ADMM}).
\item
Accelerated variance reduced stochastic ADMM (\cite[Alg. 2]{LiuShangCheng17},
\textbf{ASVRG-ADMM}).
\item
Accelerated Linearized ADMM with $\chi=1$(\cite[Alg. 2]{OuyangLan15},
\textbf{ALP-ADMM}).
\item
The classic ADMM \cite{GM75} with $f$ linearized
(\textbf{L-ADMM}):
\[
\m{x}^{k+1}=\arg\min\limits_{\m{x} \in \C{R}^l}  \left\langle\nabla f(\m{x}^k), \m{x}- \m{x}^k\right\rangle +\frac{\nu}{2}\left\|\m{x}- \m{x}^k\right\|^2+  \frac{\beta}{2}
\left\| A\m{x}-\m{y}^k -\LAMBDA^k/\beta \right\|^2.
\]
\end{itemize}
We did not compare AS-ADMM with many other stochastic algorithms mentioned
in this paper since their performance has been shown in the
literature to be worse than that of ASVRG-ADMM. 
We compare AS-ADMM with STOC-ADMM \cite{Ouyang13}
since STOC-ADMM only applies one 
stochastic gradient step to solve the $\m{x}$-subproblem in
each outer iteration, while AS-ADMM applies
a multiple number of accelerated gradient steps as determined by the theory,
and ASVRG-ADMM preforms a fixed number $m = N/200$ inner iterations.
Note that both ALP-ADMM and L-ADMM are deterministic ADMM-type algorithms
using the full gradient.

In comparing algorithms, we plot Opt\_err,
the maximum of the relative objective
error (Obj\_err) and constraint violation (Equ\_err),
versus CPU time in seconds, where
\[
\mbox{Obj\_err} =
\frac{|F({\m{x}}, {\m{y}})-F^*|}{\max\{F^*,1\}} \qquad \mbox{and} \qquad
\mbox{Equ\_err} = \left\| A {\m{x}} -{\m{y}} \right\|
\]
Here $F^*$ is the approximate optimal objective function value 
obtained by running AS-ADMM for more than 10 minutes.  
For AS-ADMM, we plot the error associated with the iterates over the
first 1/3 of the total CPU time budget, followed by the error associated with
the ergodic iterates over the last $2/3$ of the budget.
Note that the convergence theory describes the error for $k \ge \kappa$, where
$\kappa$ is the iteration number where the assumptions in the
analysis are satisfied.
An advantage of AS-ADMM is that the algorithm is completely adaptive,
and the user does not need to provide Lipschitz constants or eigenvalue bounds;
and in theory, convergence is guaranteed.
Nonetheless, the initial iterates may be less reliable than later iterates.

Figures \ref{FigProb1-1}-\ref{FigProb1-3} show results
for the data sets {\tt a9a}, {\tt ijcnn1} and {\tt w8a} and $A = \m{I}$,
while Figure~\ref{FigProb2}
is the corresponding plot for the {\tt mnist} data set with the more
complicated choice $A=[\mathbf{G}; \mathbf{I}]$ explained
in subsection~\ref{tp}.
We can see that both  AS-ADMM and  ASVRG-ADMM perform better than  
STOC-ADMM \cite{Ouyang13},
where only one stochastic gradient step is used in each iteration
to solve the $\m{x}$-subproblem.
We also see that AS-ADMM and  ASVRG-ADMM achieve
comparable performance on the lasso problems for the first
three data sets, while AS-ADMM performs significantly better than 
ASVRG-ADMM on the last data set, where the constraint is more complex.
Note that $\mbox{Opt\_err}$ for AS-ADMM
has a big drop at around $1/3$ of the CPU time budget, the point where
we start to utilize the ergodic iterates when reporting the objective value.
Observe that both stochastic algorithms, AS-ADMM and ASVRG-ADMM,
perform significantly better than the deterministic methods
ALP-ADMM and L-ADMM, while the accelerated nature of
ALP-ADMM leads to much better performance than that of the classic L-ADMM.

\begin{figure}[htbp]
 \begin{minipage}{1\textwidth}
 \def\figurename{\footnotesize Fig.}
 \centering
\resizebox{9cm}{6cm}{\includegraphics{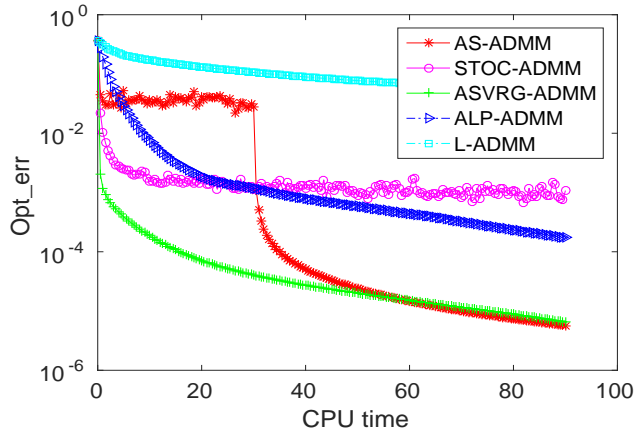}}
\caption{\footnotesize  Comparison of $\mbox{Opt\_err}$ vs CPU time   
for Problem $(\ref{Sec5-prob})$ and the {\tt a9a} dataset } \label{FigProb1-1}
   \end{minipage}
\end{figure}

\begin{figure}[htbp]
 \begin{minipage}{1\textwidth}
 \def\figurename{\footnotesize Fig.}
 \centering
\resizebox{8.1cm}{6cm}{\includegraphics{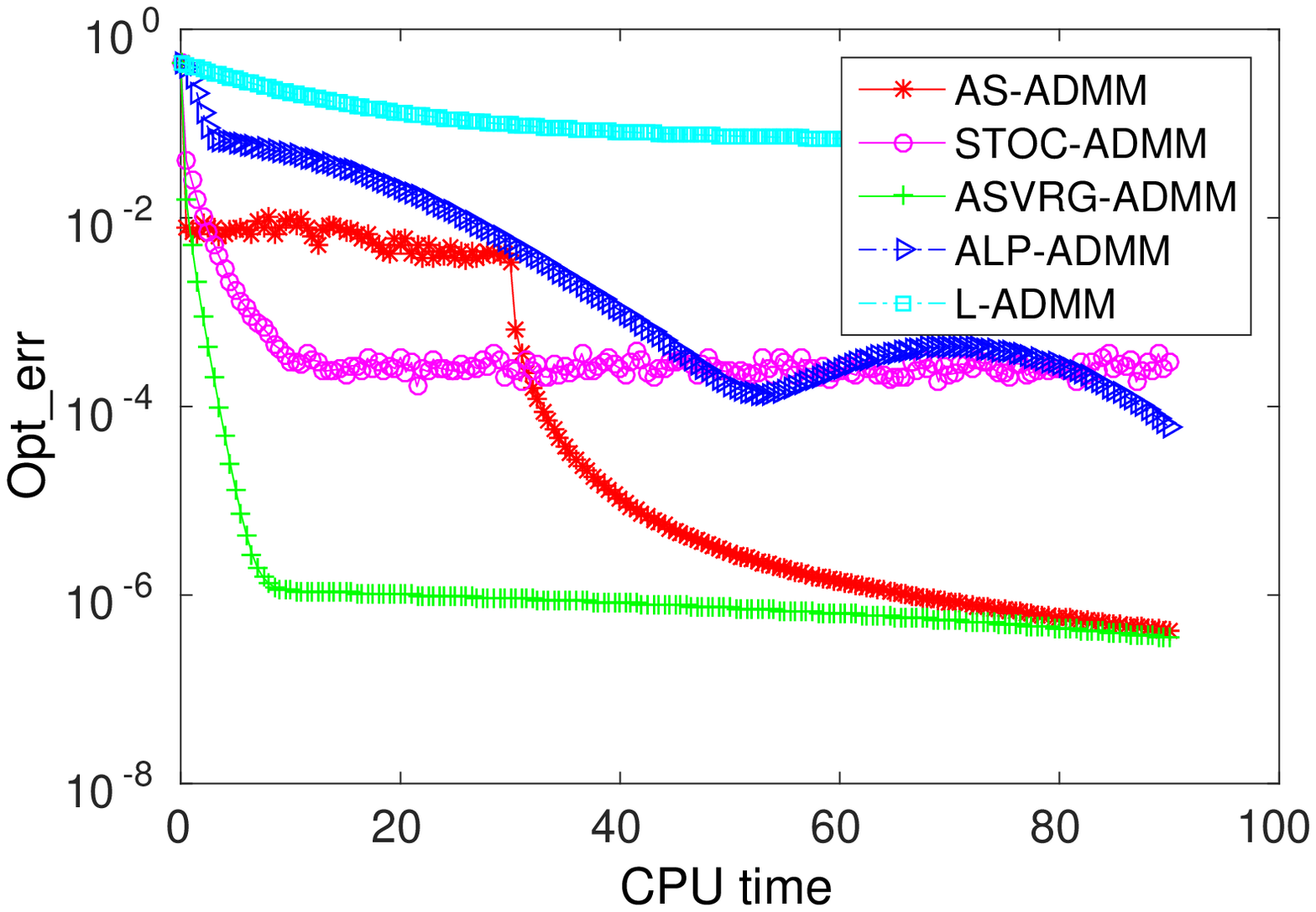}}
\caption{\footnotesize  Comparison of $\mbox{Opt\_err}$ vs CPU time   for  Problem $(\ref{Sec5-prob})$ and the {\tt ijcnn1} dataset } \label{FigProb1-2}
   \end{minipage}
\end{figure}

\begin{figure}[htbp]
 \begin{minipage}{1\textwidth}
 \def\figurename{\footnotesize Fig.}
 \centering
\resizebox{9cm}{6cm}{\includegraphics{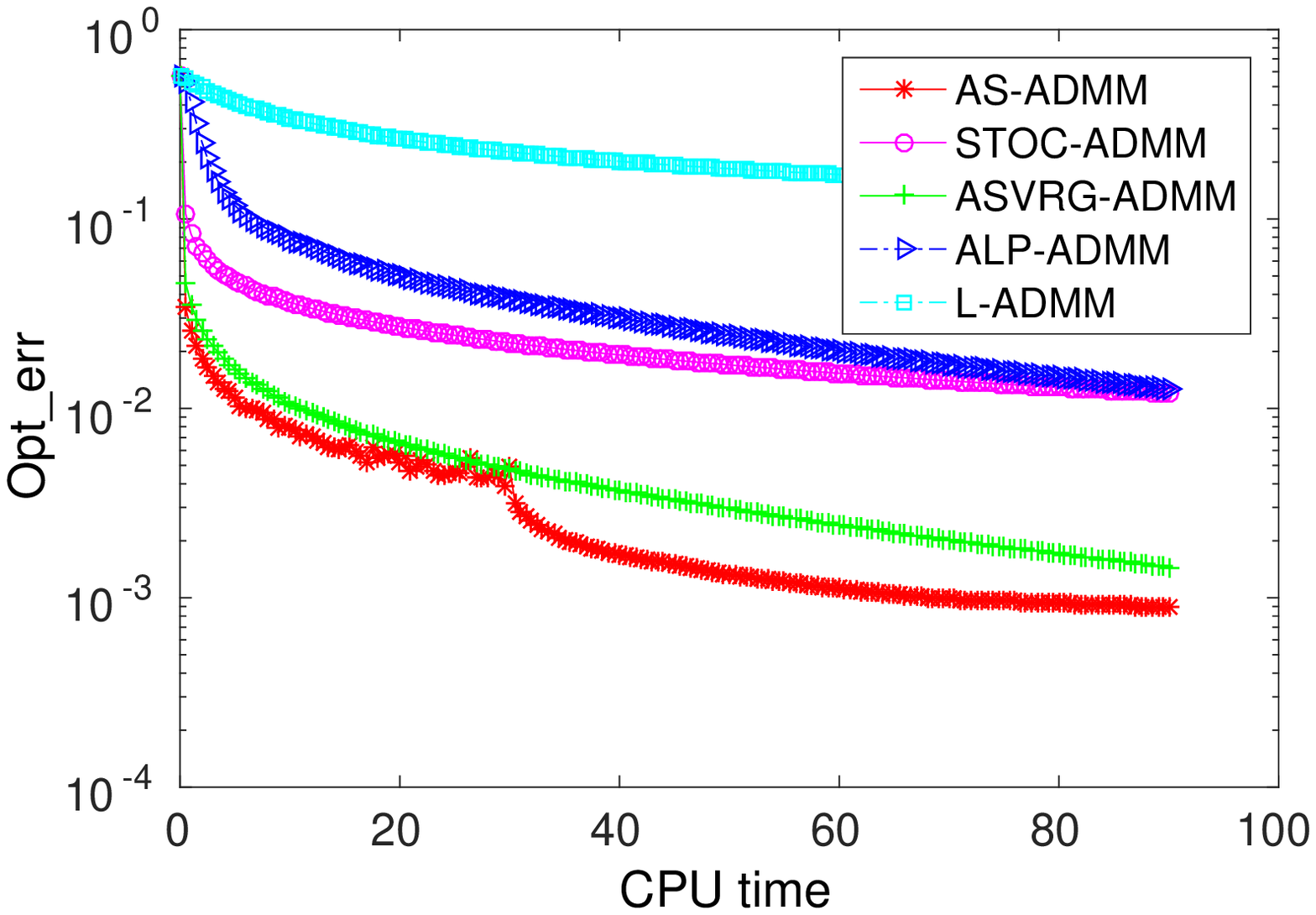}}
\caption{\footnotesize  Comparison of $\mbox{Opt\_err}$ vs CPU time   for   Problem $(\ref{Sec5-prob})$ and the  {\tt w8a} dataset } \label{FigProb1-3}
   \end{minipage}
\end{figure}

\begin{figure}[htbp]
 \begin{minipage}{1\textwidth}
 \def\figurename{\footnotesize Fig.}
 \centering
\resizebox{9cm}{6cm}{\includegraphics{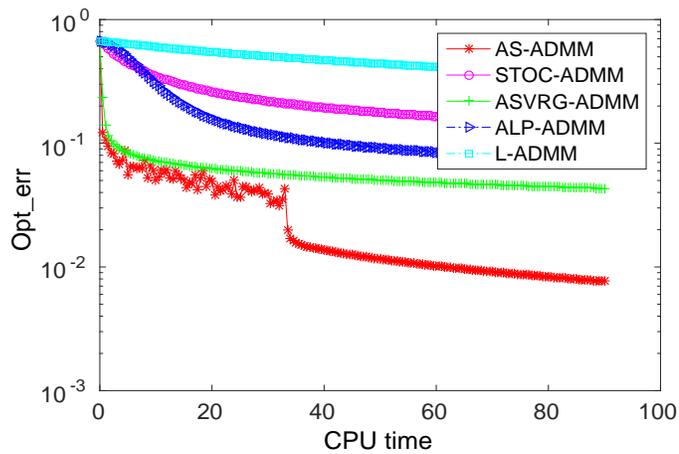}}
\caption{\footnotesize  Comparison of $\mbox{Opt\_err}$ vs CPU time for Problem $(\ref{Sec5-prob})$ and the {\tt mnist} dataset } \label{FigProb2}
   \end{minipage}
\end{figure}

\section{Conclusion}\label{sec-conc}
We have developed an accelerated stochastic ADMM for
solving a type of regularized empirical risk minimization
problem arising in machine and statistical learning. 
We also discussed incremental sampling techniques,
which are potentially beneficial when the subproblems for computing the
stochastic gradient step are expensive, and a
variant of AS-ADMM that was achieved by linearizing the $\m{y}$-subproblem.
The proposed algorithm AS-ADMM combines
both the variance reduction technique and an
accelerated gradient method for fast convergence.
Using a unified variational analysis, the expected objective error and
constraint violation for ergodic iterates are $\C{O}(1/k)$ or
$\C{O}(k^{-1}\log k)$, depending on the choice of parameters.
Numerical experiments on group lasso problems using
well-established stochastic and deterministic ADMM algorithms show that
AS-ADMM can be very effective for solving data mining and machine
learning problems with large data sets.
With stronger assumptions, bounds for the iterates in expectation, as well
as linear convergence results, are established.
\section{Appendix: Additional Properties of the Iterates}\label{appendix}
In the appendix, we derive additional properties of AS-ADMM which
involve new assumptions that do not appear in the previous analysis.
\subsection{Iteration Bounds \label{bounds}}
When $\C{D}_k := \C{M}_k - \beta A'A$ is uniformly positive definite,
the expectation of the (nonergodic) iterates
$\m{w}^k = (\m{x}^k, \m{y}^k, \g{\lambda}^k)$
is uniformly bounded.
\smallskip

\begin{proposition} \label{P1}
If {\rm (A1)} and {\rm (A2)} are satisfied, the parameters
$\eta_k$ and $M_k$ are chosen according to $(\ref{Sec33-12ba2})$ with
$\varrho > 1$\, and there exists $c > 0$ such that $\C{D}_k \succeq c\m{I}$
for every $k$, then $\mathbb{E}\big[\|\m{w}^k\|^2\big]$ is bounded
uniformly in $k$;
moreover, if $s \in (0, (1+\sqrt{5})/2)$, then
$\mathbb{E}\big[ \|A\m{x}^k + B\m{y}^k - \m{b}\|^2 \big]$ tends to $0$,
while $\mathbb{E}\big[ \|A\m{x}^k + B\m{y}^k - \m{b}\|^2 \big]$
is uniformly bounded if $s = (1+\sqrt{5})/2$.
\end{proposition}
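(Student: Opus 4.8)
The plan is to turn the per-iteration estimate (\ref{bjc-39-new}) into a telescoping Lyapunov recursion by evaluating it at a fixed primal-dual solution $\m{w} = \m{w}^* \in \C{W}^*$. With this choice the left-hand side of (\ref{bjc-39-new}) is exactly $F(\tilde{\m{w}}^k) - F(\m{w}^*) + (\tilde{\m{w}}^k - \m{w}^*)\tr \C{J}(\m{w}^*)$, which is nonnegative by the variational inequality (\ref{Sec3-1}) taken at $\m{w} = \tilde{\m{w}}^k$. Dropping this term and introducing the Lyapunov quantity $V_k := \frac{1}{2}\|\m{w}^* - \m{w}^k\|_{Q_k}^2 + \frac{\xi_2}{2}\|A\m{x}^k + B\m{y}^k - \m{b}\|^2$, inequality (\ref{bjc-39-new}) rearranges into the one-step recursion $V_{k+1} + \frac{1}{2}\|\m{x}^k - \m{x}^{k+1}\|_{\C{D}_k}^2 + \frac{\xi_1}{2}\|A\m{x}^{k+1} + B\m{y}^{k+1} - \m{b}\|^2 + \zeta^k \le V_k$, where $\xi_1 = \beta(1 + s - s^2)$ and $\xi_2 = \beta(1-s)^2$ are the constants from the proof of Lemma~\ref{Sec3-theore1}.

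First I would take expectations and sum this recursion over $k$ from $\kappa$ (the index, as in Theorem~\ref{coll-2}, beyond which $\eta_k = c_1/(M_k(M_k+1))$ and $\eta_k \le 1/2\nu$) up to $K$. The heart of the argument is controlling the accumulated stochastic error $-\sum_{k=\kappa}^{K} \mathbb{E}[\zeta^k]$. As in the proof of Lemma~\ref{Sec3-theore1}, $\mathbb{E}[\langle \DELTA_t, \breve{\m{x}}_t - \m{x}\rangle] = 0$, so from (\ref{zeta_k}) the error splits into a ``boundary'' part carrying the coefficient $2/(M_k(M_k+1)\eta_k)$ and a variance part bounded by $\frac{\sigma^2}{2}\eta_k M_k$. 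Under (\ref{Sec33-12ba2}) one has $M_k(M_k+1)\eta_k \equiv c_1$ for $k \ge \kappa$, so the boundary part has the constant coefficient $2/c_1$ and telescopes to at most $\frac{2}{c_1}\mathbb{E}\|\m{x}^* - \breve{\m{x}}^\kappa\|_{\C{H}}^2$, while $\sum_k \eta_k M_k \le \sum_k c_1/(1 + c_3 k^\varrho) < \infty$ precisely because $\varrho > 1$. This is the step I expect to be the main obstacle: it is exactly where $\varrho > 1$ is indispensable, since for $\varrho = 1$ the variance part would grow like $\log K$ (the source of the $\C{O}(T^{-1}\log T)$ rate in Theorem~\ref{coll-2}) and would not yield uniform boundedness. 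Together these give a uniform bound $-\sum_{k=\kappa}^{K}\mathbb{E}[\zeta^k] \le C_0 < \infty$.

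Combining, the summed recursion gives $\mathbb{E}[V_{K+1}] + \frac{1}{2}\sum_{k=\kappa}^{K}\mathbb{E}[\xi_1\|A\m{x}^{k+1}+B\m{y}^{k+1}-\m{b}\|^2] \le \mathbb{E}[V_\kappa] + C_0 =: C$, a finite constant independent of $K$ (the nonnegative $\|\m{x}^k - \m{x}^{k+1}\|_{\C{D}_k}^2$ terms are discarded). Since $Q_k$ in (\ref{tilde-wk-Q}) contains the uniformly positive definite blocks $\C{D}_k \succeq c\m{I}$ and $\frac{1}{s\beta}\m{I}$, the bound on $\mathbb{E}[V_{K+1}]$ immediately yields uniform bounds on $\mathbb{E}\|\m{x}^{K+1}\|^2$ and $\mathbb{E}\|\g{\lambda}^{K+1}\|^2$; the $\beta B\tr B$ block bounds $\mathbb{E}\|B(\m{y}^{K+1}-\m{y}^*)\|^2$, which together with the constraint-violation bound below controls $\mathbb{E}\|\m{w}^{K+1}\|^2$ (passing from $B\m{y}^k$ to $\m{y}^k$ uses injectivity of $B$, as in the deterministic convergence remark).

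Finally I would read off the two constraint statements from the sign of $\xi_1 = \beta(1+s-s^2)$, which vanishes exactly at the endpoint $s = (1+\sqrt{5})/2$. For $s \in (0, (1+\sqrt{5})/2)$ we have $\xi_1 > 0$, so $\frac{\xi_1}{2}\sum_{k}\mathbb{E}\|A\m{x}^{k+1}+B\m{y}^{k+1}-\m{b}\|^2 \le C$ shows this nonnegative series converges, whence its terms satisfy $\mathbb{E}\|A\m{x}^{k}+B\m{y}^{k}-\m{b}\|^2 \to 0$. For $s = (1+\sqrt{5})/2$ we have $\xi_1 = 0$ but $\xi_2 = \beta(1-s)^2 > 0$, so summability is lost and one retains only $\frac{\xi_2}{2}\mathbb{E}\|A\m{x}^{K+1}+B\m{y}^{K+1}-\m{b}\|^2 \le \mathbb{E}[V_{K+1}] \le C$, i.e.\ the expected constraint violation stays uniformly bounded but need not decay. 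This $\xi_1$ dichotomy is precisely why the stepsize endpoint is admissible for boundedness while only the open interval forces the violation to zero.
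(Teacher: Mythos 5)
Your proposal is correct and follows essentially the same route as the paper's own proof: insert $\m{w}=\m{w}^*$ into (\ref{bjc-39-new}), discard the left side via (\ref{Sec3-1}), form the Lyapunov quantity $\|\m{w}^k-\m{w}^*\|_{Q_k}^2+\xi_2\gamma_k$ (the paper folds the telescoping boundary term $\frac{4}{M_k(M_k+1)\eta_k}\|\breve{\m{x}}^k-\m{x}^*\|_{\C{H}}^2$ into its $E_k$, whereas you telescope it separately in the sum --- equivalent bookkeeping), annihilate the $\langle \DELTA_t,\cdot\rangle$ terms in expectation, bound the variance contribution by a multiple of $\eta_k M_k$, which is summable precisely because $\varrho>1$, and read off the two constraint statements from the same $\xi_1/\xi_2$ dichotomy at $s=(1+\sqrt{5})/2$. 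The one place you go beyond the paper is the passage from $\|B(\m{y}^k-\m{y}^*)\|^2$ (which is all the $Q_k$-norm controls) to $\|\m{y}^k\|^2$, where you invoke injectivity of $B$ --- an assumption not stated in the proposition, but the paper's proof stops at the bound on $\mathbb{E}[E_j]$ and never addresses this step at all, so your treatment is, if anything, the more careful one.
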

\smallskip

\begin{proof}
Insert $\m{w} = \m{w}^* \in \C{W}^*$ in (\ref{bjc-39-new}) and utilize
(\ref{Sec3-1}) to obtain
\begin{equation}\label{initialbound}
\left\|\m{w}^{k+1} - \m{w}^*\right\|^2_{Q_{k+1}}
-\left\|\m{w}^{k} - \m{w}^*\right\|^2_{Q_{k}} \le
\xi_2 (\gamma_k - \gamma_{k+1}) - \xi_1 \gamma_{k+1} - 2\zeta^k ,
\end{equation}
where $\gamma_k = \|A\m{x}^k + B\m{y}^k - \m{b}\|^2$,
$\zeta^k$ is defined in (\ref{zeta_k}) with $\m{x} = \m{x}^*$,
$\xi_2 := \beta (1-s)^2 \ge 0$, and
$\xi_1 := \beta ((2-s) - (1-s)^2)  > 0$ if $s \in (0, (1+\sqrt{5})/2)$.
Let $E_k$ be defined by
\[
E_k = 
\left\|\m{w}^k-\m{w}^*\right\|^2_{Q_{k}} +
\xi_2 \gamma_k + \frac{4}{M_k (M_k+1)\eta_k}
\left\|\breve{\m{x}}^{k}-\m{x}^*\right\|_{\C{H}}^2 .
\]
Since $M_k (M_k+1) \eta_k$ is nondecreasing, it follows from
(\ref{initialbound}) and the definition of $\zeta^k$ that
\[
E_{k+1} - E_{k} + \xi_1\gamma_{k+1} \le
\frac{4}{M_k(M_k+1)}\bigg[
\sum\limits_{t=1}^{M_k}t\langle \DELTA_t, \breve{\m{x}}_{t}
-\m{x}^*\rangle +\frac{\eta_k}{4(1-\eta_k\nu)}
\sum\limits_{t=1}^{M_k}t^2\left\|\DELTA_t\right\|_{\C{H}^{-1}}^2\bigg].
\]
As shown in the proof of Lemma~\ref{Sec3-theore1},
the expection of the first term on the right side vanishes,
while the expectation of the second term is bounded by $\sigma^2 \eta_k M_k$.
Hence, we have
\[
\mathbb{E} \big[ E_{k+1} - E_{k} + \xi_1\gamma_{k+1} \big] \le
\sigma^2 \eta_k M_k .
\]
We sum this inequality over $k \in [\kappa, j)$ to obtain
\[
\mathbb{E} \big[ E_j \big] + \xi_1 \sum_{k = \kappa+1}^j
\mathbb{E} \big[ \gamma_k \big] \le \mathbb{E} \big[ E_\kappa \big]
+ \sigma^2 \sum_{k = \kappa}^{j-1} \eta_k M_k .
\]
Since $\varrho > 1$, it follows that $\eta_k M_k$ is summable
when (\ref{Sec33-12ba2}) holds,
and $\mathbb{E} \big[ E_{j} \big]$ is bounded, uniformly in $j$.
Moreover, when $s \in (0, (1+\sqrt{5})/2)$,
the bound for the sum of $\mathbb{E} \big[\gamma_k\big]$ over $k \ge \kappa$
implies that $\mathbb{E} \big[\gamma_k \big]$ tends to zero.
If $s = (1+\sqrt{5})/2$, then $\xi_2 > 0$ and the uniform bound for
$\mathbb{E} \big[ E_{j} \big]$ implies that
$\mathbb{E} \big[\gamma_j \big]$ is uniformly bounded.
\end{proof}

\subsection{Convergence of Ergodic Iterates Under Strong Convexity}
We now show that an error bound such as (\ref{Ergodic-rate})
implies convergence of the ergodic iterates in expectation
when strong convexity holds.
\smallskip

\begin{proposition} \label{P2}
Suppose $(\ref{Ergodic-rate})$ holds.
If either $f$ and $g$ are strongly convex or $f$ is strongly convex and
the columns of $B$ are linearly independent, then
\[
\mathbb{E} \big[\|\m{x}_T - \m{x}^*\|^2 + \|\m{y}_T - \m{y}^*\|^2 \big] =
E_\varrho(T),
\]
where $(\m{x}^*, \m{y}^*)$ is the unique solution of $(\ref{P})$ and
$E_\varrho(T)$ is defined in Theorem~$\ref{coll-2}$.
\end{proposition}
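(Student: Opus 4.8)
The plan is to convert the objective-plus-constraint estimate (\ref{Ergodic-rate}) into an iterate estimate by strengthening the saddle-point optimality conditions of Section~\ref{Sepre} using strong convexity. First I would record the sharpened inequalities. Since $\m{x}^*$ minimizes $f(\m{x}) - \langle A\tr\LAMBDA^*, \m{x}\rangle$ over $\C{X}$ and $f$ is $\mu_f$-strongly convex, while $\m{y}^*$ minimizes $g(\m{y}) - \langle B\tr\LAMBDA^*, \m{y}\rangle$ over $\C{Y}$ and $g$ is $\mu_g$-strongly convex with $\mu_g \ge 0$, the two scalar inequalities following (\ref{Lagr}) upgrade to
\[
f(\m{x}) - f(\m{x}^*) - \langle A\tr\LAMBDA^*, \m{x} - \m{x}^*\rangle \ge \tfrac{\mu_f}{2}\|\m{x} - \m{x}^*\|^2
\]
and its analogue for $g$. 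Adding these and using $A\m{x}^* + B\m{y}^* = \m{b}$ gives, for all $\m{x}\in\C{X}$, $\m{y}\in\C{Y}$,
\[
F(\m{w}) - F(\m{w}^*) - \langle\LAMBDA^*, A\m{x} + B\m{y} - \m{b}\rangle \ge \tfrac{\mu_f}{2}\|\m{x} - \m{x}^*\|^2 + \tfrac{\mu_g}{2}\|\m{y} - \m{y}^*\|^2.
\]

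Next I would evaluate this at $(\m{x},\m{y}) = (\m{x}_T,\m{y}_T)$, take expectations, and bound the left side. From (\ref{Ergodic-rate}) we have $|\mathbb{E}[F(\m{w}_T) - F(\m{w}^*)]| = E_\varrho(T)$ and $\mathbb{E}[\|A\m{x}_T + B\m{y}_T - \m{b}\|] = E_\varrho(T)$, so by Cauchy--Schwarz the left side is at most $(1 + \|\LAMBDA^*\|)E_\varrho(T) = E_\varrho(T)$, yielding $\tfrac{\mu_f}{2}\mathbb{E}[\|\m{x}_T - \m{x}^*\|^2] + \tfrac{\mu_g}{2}\mathbb{E}[\|\m{y}_T - \m{y}^*\|^2] \le E_\varrho(T)$. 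When $f$ and $g$ are both strongly convex ($\mu_f,\mu_g>0$), each term is individually $E_\varrho(T)$ and the conclusion is immediate. When only $f$ is strongly convex, this gives $\mathbb{E}[\|\m{x}_T - \m{x}^*\|^2] = E_\varrho(T)$, and I would recover the $\m{y}$-component from the identity $B(\m{y}_T - \m{y}^*) = (A\m{x}_T + B\m{y}_T - \m{b}) - A(\m{x}_T - \m{x}^*)$: since the columns of $B$ are independent there is $c_B>0$ with $\|B\m{v}\|\ge c_B\|\m{v}\|$, so $\|\m{y}_T-\m{y}^*\|^2 \le c_B^{-2}\|B(\m{y}_T-\m{y}^*)\|^2$, and $\mathbb{E}[\|\m{y}_T-\m{y}^*\|^2]$ is controlled by $\mathbb{E}[\|A\m{x}_T+B\m{y}_T-\m{b}\|^2]$ together with $\|A\|^2\mathbb{E}[\|\m{x}_T-\m{x}^*\|^2]$.

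The hard part will be precisely this last step, because it requires the \emph{second} moment $\mathbb{E}[\|A\m{x}_T+B\m{y}_T-\m{b}\|^2] = E_\varrho(T)$ of the ergodic constraint violation, whereas (\ref{Ergodic-rate}) supplies only the first moment, and Jensen's inequality runs the wrong way. To obtain it I would not treat (\ref{Ergodic-rate}) as a black box but return to the summed inequality (\ref{bjc-39-new}) with $\m{w}=\m{w}^*$: the variational inequality (\ref{Sec3-1}) makes every left-hand side term nonnegative, and rather than discarding the negative bracket I would \emph{retain} the term $-\tfrac{\xi_1}{2}\gamma_{k+1}$, where $\gamma_{k+1}=\|A\m{x}^{k+1}+B\m{y}^{k+1}-\m{b}\|^2$ and $\xi_1 = \beta((2-s)-(1-s)^2)$. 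Summing over $k\in[\kappa,\kappa+T]$, the $Q_k$ and $\xi_2$ brackets telescope and the $\zeta^k$ sum is estimated exactly as in Lemma~\ref{Sec3-theore1}, giving $\tfrac{\xi_1}{2}\sum_k\mathbb{E}[\gamma_{k+1}] = \C{O}((1+T)E_\varrho(T))$; convexity of $\|\cdot\|^2$ then yields $\mathbb{E}[\|A\m{x}_T+B\m{y}_T-\m{b}\|^2] \le \tfrac{1}{1+T}\sum_k\mathbb{E}[\gamma_{k+1}] = E_\varrho(T)$. I would flag that this uses $\xi_1>0$, i.e. $s\in(0,(1+\sqrt5)/2)$ strictly, mirroring the dichotomy already present in Proposition~\ref{P1}, so the endpoint $s=(1+\sqrt5)/2$ would have to be excluded in the second case.
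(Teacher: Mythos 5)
Your proof is correct, and for the first case (both $f$ and $g$ strongly convex) it coincides with the paper's argument: the paper likewise combines strong convexity with the first-order optimality conditions at $(\m{x}^*,\m{y}^*,\LAMBDA^*)$ to obtain
$F(\m{w}_T)-F(\m{w}^*) \ge (\LAMBDA^*)\tr(A\m{x}_T+B\m{y}_T-\m{b}) + \frac{\alpha}{2}\left(\|\m{x}_T-\m{x}^*\|^2+\|\m{y}_T-\m{y}^*\|^2\right)$
and then invokes (\ref{Ergodic-rate}); only first moments of the constraint violation are needed there, exactly as in your argument. Where you genuinely diverge is the second case ($f$ strongly convex, $B$ with independent columns). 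The paper stays at the level of first moments: it writes $\mathbb{E}\big[\|B(\m{y}_T-\m{y}^*)\|\big] \le E_\varrho(T) + \mathbb{E}\big[\|A(\m{x}_T-\m{x}^*)\|\big]$ and asserts that this, together with the bound on $\mathbb{E}\big[\|\m{x}_T-\m{x}^*\|^2\big]$ from (\ref{x-bound}) and the column independence of $B$, yields $\mathbb{E}\big[\|\m{y}_T-\m{y}^*\|^2\big] = E_\varrho(T)$. Read literally, that final step runs into precisely the wrong-way Jensen issue you identified: a first-moment bound of order $\sqrt{E_\varrho(T)}$ on $\|\m{y}_T-\m{y}^*\|$ does not control its second moment, so your concern is well-founded and the paper's own concluding step is loose. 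Your fix --- reopening the telescoped inequality (\ref{bjc-39-new}) at $\m{w}=\m{w}^*$, retaining the $-\frac{\xi_1}{2}\|A\m{x}^{k+1}+B\m{y}^{k+1}-\m{b}\|^2$ term, summing, and applying Jensen over the ergodic average to get $\mathbb{E}\big[\|A\m{x}_T+B\m{y}_T-\m{b}\|^2\big] = E_\varrho(T)$ --- is sound, and it is the same device the paper itself uses in Proposition~\ref{P1}. What each approach buys: the paper's route formally stays within the stated hypothesis ``Suppose (\ref{Ergodic-rate}) holds,'' treating it as a black box, but at the cost of a gap in rigor; your route delivers a genuine second-moment bound, but steps outside that hypothesis, since it needs (A1), the parameter choice (\ref{Sec33-12ba2}), and $s<(1+\sqrt{5})/2$ strictly (so that $\xi_1>0$), excluding the endpoint stepsize --- a restriction you correctly flag and which mirrors the dichotomy in Proposition~\ref{P1}.
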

\smallskip

\begin{proof}
If $f$ is strongly convex with modulus $\alpha$, then it follows from
strong convexity, the first-order optimality conditions for a stationary point
$(\m{x}^*, \m{y}^*, \g{\lambda}^*)$, and the inclusion
$\m{x}_T \in \C{X}$ that
\begin{eqnarray*}
f(\m{x}_T) - f(\m{x}^*) &\ge& \nabla f(\m{x}^*)(\m{x}_T - \m{x}^*)
+ \frac{\alpha}{2} \|\m{x}_T - \m{x}^*\|^2 \\
&\ge& (\g{\lambda}^*)\tr A(\m{x}_T - \m{x}^*)
+ \frac{\alpha}{2} \|\m{x}_T - \m{x}^*\|^2.
\end{eqnarray*}
If $g$ were also strongly convex, with the same modulus $\alpha$,
then the same inequality holds, but with
$\m{x}$ replaced by $\m{y}$, $A$ replaced by $B$, and gradient replaced
by subgradient.
Together, these inequalities yield
\[
F(\m{w}_T) - F(\m{w}^*) \ge
(\g{\lambda}^*)\tr \left[ A\m{x}_T + B\m{y}_T - \m{b} \right]
+ \frac{\alpha}{2} \left[\|\m{x}_T - \m{x}^*\|^2 + 
\|\m{y}_T - \m{y}^*\|^2 \right] .
\]
Taking expectations and utilizing (\ref{Ergodic-rate}) gives
\begin{eqnarray}
\mathbb{E} \big[\|\m{x}_T - \m{x}^*\|^2 + 
\|\m{y}_T - \m{y}^*\|^2 \big] &\le&
\frac{2}{\alpha}
\mathbb{E} \big[ \|A\m{x}_T + B\m{y}_T - \m{b} \| \big]
\|\g{\lambda}^*\| + E_\varrho(T)
\label{x-bound} \\
&=& E_\varrho (T) . \nonumber
\end{eqnarray}

On the other hand, if $g$ is only convex, not strongly convex, then
the $\|\m{y}_T - \m{y}^*\|^2$ term in this last inequality is lost.
But if the columns of $B$ were linearly independent, then the equation
error can be manipulated as follows:
\begin{eqnarray*}
\mathbb{E} \big[\left\|A\m{x}_{T}+B\m{y}_{T}-\m{b} \right\|\big] &=&
\mathbb{E} \big[\left\|A(\m{x}_{T}-\m{x}^*) +
B(\m{y}_{T}-\m{y}^*)\right\| \big] \\
&\ge&
\mathbb{E} \big[\left\| B(\m{y}_{T}-\m{y}^*)\right\| \big] -
\mathbb{E} \big[\left\| A(\m{x}_{T}-\m{x}^*)\right\| \big] .
\end{eqnarray*}
Thus we have
\[
\mathbb{E} \big[\left\| B(\m{y}_{T}-\m{y}^*)\right\| \big] \le
E_\varrho (T) + 
\mathbb{E} \big[\left\| A(\m{x}_{T}-\m{x}^*)\right\| \big].
\]
Hence, the bound for $\mathbb{E} \big[\|\m{x}_T - \m{x}^*\|^2 \big]$
from (\ref{x-bound}) and the independence of the columns in $B$ imply
again that $\mathbb{E} \big[\|\m{y}_T - \m{y}^*\|^2 \big] = E_\varrho (T)$.
\end{proof}
\smallskip

\subsection{Linear Convergence of Iterates Under Strong Convexity}
In this section, it is proved that AS-ADMM is linearly convergent
when $f$ and $g$ are strongly convex.
The analysis requires a geometric growth rate for the inner iterations,
similar to the geometric growth rate employed in \cite{XieShanbhag20}
for the analysis of a much different ADMM.
In detail, the linear convergence result makes the following assumptions:
\smallskip

\begin{itemize}
\item[(L1)]
$f$ and $g$ are strongly convex with modulus $\alpha > 0$, and
both $f$ and $g$ have Lipschitz continuous gradients, with Lipschitz
constant $\nu$.
\item[(L2)]
The sets $\C{X} = \mathbb{R}^{n_1}$ and $\C{Y} = \mathbb{R}^{n_2}$.
\item[(L3)]
The stepsize $s \in (0, (1+\sqrt{5})/2)$ and
for some $c_1$ and $c_2 > 0$ and for all $k$, we have
\[
c_1 \m{I} \preceq \C{D}_{k+1} \le \C{D}_k :=
\C{M}_k - \beta A\tr A \preceq c_2 \m{I} .
\]
Moreover, $\mathbb{E}\big[\|\DELTA_t\|_{\C{H}^{-1}}^2\big] \le \sigma^2$
for some $\sigma > 0$,
independent of $t$ and the iteration number $k$, where $\DELTA_t$
is defined in $(\ref{1-sec1-05})$. 
\item[(L4)]
For some $\theta > 0$, we have
\[
M_k = \lceil (1+\theta)^2 M_{k-1}(\|\breve{\m{x}}^k\|^2 + 1)\rceil
\quad \mbox{and} \quad \eta_k = (1+\theta)^{-k}/M_k.
\]
\end{itemize}
\smallskip

By (L4), $M_k (M_k+1) \eta_k$ is nondecreasing, and we have:
\smallskip

\begin{itemize}
\item[(L4a)]
$M_k \ge (1+\theta)^{2k}$,
\item[(L4b)]
$M_k \eta_k = (1+\theta)^{-k}$, and
\item[(L4c)]
$\|\breve{\m{x}}^k\|^2/[M_k(M_k+1) \eta_k] \le (1+\theta)^{-k}$.
\end{itemize}
\smallskip

\begin{proposition}
If {\rm (L1)--(L4)} hold, then there exists $c > 0$ and
$0 < \tau < 1$ such that
\[
\mathbb{E} \big[ \|\m{x}^k - \m{x}^*\|^2 + \|\m{y}^k - \m{y}^*\|^2
+ \|\g{\lambda}^k - \bar{\g{\lambda}}^k\|^2 \big]
\le c\tau^k
\]
for all $k \ge 0$, where $\bar{\g{\lambda}}$ denotes
the projection of $\g{\lambda}$ onto
$\g{\Lambda}^*$, the set of all multipliers associated with the solution
$(\m{x}^*, \m{y}^*)$ of $(\ref{P})$.
\end{proposition}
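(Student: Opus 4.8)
The plan is to fuse the one-step estimate (\ref{bjc-39-new}) with the strong convexity in (L1) to obtain a Lyapunov inequality that contracts geometrically modulo a stochastic error that is itself geometrically small. A preliminary point is the choice of reference multiplier: since every dual increment $A\m{x}^{k+1}+B\m{y}^{k+1}-\m{b}$ lies in $\mathrm{range}\,[A\ B]$, the component of $\g{\lambda}^k$ in the kernel of the map $\g{\lambda}\mapsto(A\tr\g{\lambda},B\tr\g{\lambda})$ is invariant in $k$; choosing $\g{\lambda}^*\in\g{\Lambda}^*$ to share this component makes $\bar{\g{\lambda}}^k=\g{\lambda}^*$ and $\|\g{\lambda}^k-\bar{\g{\lambda}}^k\|=\|\g{\lambda}^k-\g{\lambda}^*\|$ for all $k$, so a single fixed $\m{w}^*=(\m{x}^*,\m{y}^*,\g{\lambda}^*)$ is compatible with telescoping while still measuring the distance to $\g{\Lambda}^*$. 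Putting $\m{w}=\m{w}^*$ in (\ref{bjc-39-new}) and using that (L1) strengthens the variational inequality (\ref{Sec3-1}) to $F(\tilde{\m{w}}^k)-F(\m{w}^*)+(\tilde{\m{w}}^k-\m{w}^*)\tr\mathcal{J}(\m{w}^*)\ge\frac{\alpha}{2}(\|\m{x}^{k+1}-\m{x}^*\|^2+\|\m{y}^{k+1}-\m{y}^*\|^2)$, I would repeat the energy computation of Proposition~\ref{P1}, now keeping both the strong-convexity lower bound and the term $\|\m{x}^k-\m{x}^{k+1}\|^2_{\C{D}_k}$ that was discarded there. With $E_k=\|\m{w}^k-\m{w}^*\|^2_{Q_k}+\xi_2\gamma_k+\frac{4}{M_k(M_k+1)\eta_k}\|\breve{\m{x}}^k-\m{x}^*\|^2_{\C{H}}$ and $\gamma_k=\|A\m{x}^k+B\m{y}^k-\m{b}\|^2$, this yields
\[
\mathbb{E}\big[E_{k+1}\big]+\mathbb{E}\big[\xi_1\gamma_{k+1}+\alpha(\|\m{x}^{k+1}-\m{x}^*\|^2+\|\m{y}^{k+1}-\m{y}^*\|^2)+\|\m{x}^k-\m{x}^{k+1}\|^2_{\C{D}_k}\big]\le\mathbb{E}\big[E_k\big]+\sigma^2(1+\theta)^{-k},
\]
where the noise bound uses that the cross term in $\zeta^k$ has zero mean, the variance term is at most $\sigma^2\eta_kM_k=\sigma^2(1+\theta)^{-k}$ by (L4b), and the proximal part of $E_k$ is itself $\C{O}((1+\theta)^{-k})$ by (L4a) and (L4c).

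The heart of the argument is to dominate a fixed fraction $\delta\,V_{k+1}$, where $V_k=\|\m{w}^k-\m{w}^*\|^2_{Q_k}+\xi_2\gamma_k$, by the gain terms $\xi_1\gamma_{k+1}+\alpha(\|\m{x}^{k+1}-\m{x}^*\|^2+\|\m{y}^{k+1}-\m{y}^*\|^2)$; this gives the contraction $(1+\delta)\mathbb{E}[V_{k+1}]\le\mathbb{E}[V_k]+C(1+\theta)^{-k}$ once the $\C{O}((1+\theta)^{-k})$ gap between $E_k$ and $V_k$ is absorbed into the noise. The $\m{x}$-block $\|\m{x}^{k+1}-\m{x}^*\|^2_{\C{D}_{k+1}}$, the $\m{y}$-block $\beta\|B(\m{y}^{k+1}-\m{y}^*)\|^2$, and the $\xi_2\gamma_{k+1}$ term of $V_{k+1}$ are bounded simultaneously using $\C{D}_{k+1}\preceq c_2\m{I}$ from (L3), the norm $\|B\|^2$, and $\xi_1>0$ (guaranteed by $s<(1+\sqrt{5})/2$ in (L3)). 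The one genuinely difficult block is the dual term $\frac{1}{s\beta}\|\g{\lambda}^{k+1}-\g{\lambda}^*\|^2$.

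For the dual block I would exploit that $\g{\Lambda}^*=\{\g{\lambda}:A\tr\g{\lambda}=\nabla f(\m{x}^*),\ B\tr\g{\lambda}=\nabla g(\m{y}^*)\}$ is affine and that $\g{\lambda}\mapsto(A\tr\g{\lambda},B\tr\g{\lambda})$ is bounded below on the orthogonal complement of its kernel, so $\|\g{\lambda}^{k+1}-\g{\lambda}^*\|$ is controlled by the two residuals $\|A\tr\g{\lambda}^{k+1}-\nabla f(\m{x}^*)\|$ and $\|B\tr\g{\lambda}^{k+1}-\nabla g(\m{y}^*)\|$. The $\m{y}$-residual follows from the exact $\m{y}$-subproblem stationarity $\nabla g(\m{y}^{k+1})=B\tr\tilde{\g{\lambda}}^k-\beta B\tr B(\m{y}^{k+1}-\m{y}^k)$ derived in the proof of Theorem~\ref{Sec31-bz6}, the Lipschitz bound on $\nabla g$ from (L1), and the dual relation for $\g{\lambda}^{k+1}-\tilde{\g{\lambda}}^k$, giving a bound in $\|\m{y}^{k+1}-\m{y}^*\|$, $\|\m{y}^{k+1}-\m{y}^k\|$, and $\gamma_{k+1}$. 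The $\m{x}$-residual is harder because the $\m{x}$-subproblem is only solved inexactly; here I would read approximate stationarity off (\ref{Bill-1}), which exhibits $A\tr\tilde{\g{\lambda}}^k-\C{D}_k(\m{x}^{k+1}-\m{x}^k)$ as a subgradient of $f$ at $\m{x}^{k+1}$ up to the error $\zeta^k$, and combine it with the Lipschitz bound on $\nabla f$ to control the $\m{x}$-residual by $\|\m{x}^{k+1}-\m{x}^*\|$, $\|\m{x}^{k+1}-\m{x}^k\|$, $\gamma$-type terms, and a $\zeta^k$ contribution.

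The main obstacle I expect is exactly this dual step: in the stochastic inexact setting it injects both the random error $\zeta^k$ and the consecutive-iterate differences $\|\m{x}^{k+1}-\m{x}^k\|$ and $\|\m{y}^{k+1}-\m{y}^k\|$ into the residual bounds. I would absorb the differences using the retained term $\|\m{x}^k-\m{x}^{k+1}\|^2_{\C{D}_k}$ (available with a favorable sign) together with a small multiple of $V_k$, handling $\|\m{y}^{k+1}-\m{y}^k\|^2\le 2\|\m{y}^{k+1}-\m{y}^*\|^2+2\|\m{y}^k-\m{y}^*\|^2$ by coupling consecutive steps, and I would control the $\zeta^k$ contribution in expectation through the geometric schedule (L4), so that after taking expectations one arrives at $(1+\delta)\mathbb{E}[V_{k+1}]\le\mathbb{E}[V_k]+C'(1+\theta)^{-k}$ for some $\delta>0$. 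A discrete Gr\"onwall induction with $\tau=\max\{(1+\delta)^{-1},(1+\theta)^{-1}\}$, enlarged slightly if these two rates coincide, then gives $\mathbb{E}[V_k]\le c\tau^k$. Since $V_k\ge c_1\|\m{x}^k-\m{x}^*\|^2+\frac{1}{s\beta}\|\g{\lambda}^k-\bar{\g{\lambda}}^k\|^2$ by (L3) and $\|\m{y}^k-\m{y}^*\|^2$ is recovered through the strong-convexity gain (or by carrying it in the Lyapunov function with weight proportional to $\alpha$), the claimed bound on $\mathbb{E}[\|\m{x}^k-\m{x}^*\|^2+\|\m{y}^k-\m{y}^*\|^2+\|\g{\lambda}^k-\bar{\g{\lambda}}^k\|^2]$ follows.
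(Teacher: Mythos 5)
Your overall architecture coincides with the paper's: insert a strong-convexity-enhanced test point into (\ref{bjc-39-new}), form a Lyapunov function consisting of $\|\cdot\|^2_{Q_k}$ plus the constraint violation plus the proximal term, kill the stochastic error with the geometric schedule (L4), and close the recursion by bounding the dual distance through the gradient residuals and the fact that $\g{\Lambda}^*$ is an affine set parallel to the null space of $[A \;\, B]\tr$, yielding $\mathbb{E}\big[E_{k+1}\big]\le\frac{1}{1+r}\mathbb{E}\big[E_k\big]+c(1+\theta)^{-k}$ and hence the linear rate. Your one structural deviation --- fixing $\g{\lambda}^*$ once and for all by matching the null-space component of the dual iterates --- is correct, since feasibility places $\m{b}$ in the range of $[A \;\, B]$, so that component of $\g{\lambda}^k$ is invariant in $k$ and the projection $\bar{\g{\lambda}}^k$ is in fact the same point for every $k$; this is a mild simplification of the paper's device, which keeps the moving projection and telescopes via $\|\g{\lambda}^{k+1}-\bar{\g{\lambda}}^{k+1}\|\le\|\g{\lambda}^{k+1}-\bar{\g{\lambda}}^{k}\|$.

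The gap is in the step you yourself flag as the crux: converting (\ref{Bill-1}) into a bound on the $\m{x}$-residual $\|\nabla f(\m{x}^{k+1})-A\tr\tilde{\g{\lambda}}^k+\C{D}_k(\m{x}^{k+1}-\m{x}^k)\|$. Inequality (\ref{Bill-1}) at a single deterministic point (say $\m{x}^*$) gives only one-sided information in one direction; to localize the residual \emph{norm} you must test (\ref{Bill-1}) at points displaced from $\m{x}^{k+1}$ along the residual direction. Those test points are functions of the inner-loop noise $\DELTA_1,\dots,\DELTA_{M_k}$ (through $\m{x}^{k+1}$ and $\tilde{\g{\lambda}}^k$), and then the cross term $\frac{2}{M_k(M_k+1)}\sum_t t\langle\DELTA_t,\breve{\m{x}}_t-\m{x}\rangle$ inside $\zeta^k$ no longer has zero mean, so your claim that the $\zeta^k$ contribution is ``controlled in expectation through the geometric schedule (L4)'' does not follow as written: the schedule controls the variance term, not this correlation. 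This is precisely why the paper introduces the auxiliary point $\bar{\m{x}}^k=\arg\min\phi_k$, with $\phi_k$ from (\ref{phi-psi-def}): it is measurable with respect to the information available at the start of outer iteration $k$, so the zero-mean argument legitimately applies to (\ref{1-sec1-13}) evaluated at $\bar{\m{x}}^k$; exact stationarity $\nabla\phi_k(\bar{\m{x}}^k)=\m{0}$ then supplies the gradient identity, the residual at $\m{x}^{k+1}$ is recovered from $\mathbb{E}\big[\|\m{x}^{k+1}-\bar{\m{x}}^k\|^2\big]\le c(1+\theta)^{-k}$, and that bound in turn requires showing $\mathbb{E}\big[\|\bar{\m{x}}^k\|^2\big]$ is uniformly bounded (strong convexity of $\phi_k$ plus the iterate bounds of Proposition~\ref{P1}). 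Your route could be repaired without $\bar{\m{x}}^k$, but only by upgrading the noise estimate: martingale orthogonality gives $\mathbb{E}\big[\|\sum_t t\DELTA_t\|^2_{\C{H}^{-1}}\big]\le\sigma^2\sum_t t^2$, so Cauchy--Schwarz yields $\frac{2}{M_k(M_k+1)}\big|\mathbb{E}\langle\textstyle\sum_t t\DELTA_t,\m{z}\rangle\big|\le c\,\sigma M_k^{-1/2}\big(\mathbb{E}\|\m{z}\|^2_{\C{H}}\big)^{1/2}\le c\,\sigma(1+\theta)^{-k}\big(\mathbb{E}\|\m{z}\|^2_{\C{H}}\big)^{1/2}$ by (L4a), valid for random $\m{z}$ --- and this again needs uniform expected boundedness of the iterates from Proposition~\ref{P1}. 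Either way, the missing ingredient is the same: a mechanism (conditional independence of the test point, or Cauchy--Schwarz plus boundedness) for handling $\zeta^k$ at noise-correlated evaluation points, and your proposal as written supplies neither.
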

\smallskip

\begin{proof}
Throughout the proof, $c$ denotes a generic positive constant,
independent of $k$, which typically has different values in different equations.
If $\bar{\m{w}}^k :=$
$(\m{x}^*, \m{y}^*, \bar{\g{\lambda}}^k)$, then by
the strong convexity assumption and the first-order optimality conditions
for $(\m{x}^*, \m{y}^*)$, we have
\begin{eqnarray*}
F(\tilde{\m{w}}^k) - F(\bar{\m{w}}^k) +
( \tilde{\m{w}}^k - \bar{\m{w}}^k)\tr \mathcal{J}(\bar{\m{w}}^k) 
&=& F(\tilde{\m{w}}^k) - F(\bar{\m{w}}^k) +
(A\m{x}^{k+1} + B\m{y}^{k+1} - \m{b})\tr \bar{\g{\lambda}}^k \\
&\ge& \frac{\alpha}{2} \left( \|\m{x}^{k+1} - \m{x}^*\|^2 +
\|\m{y}^{k+1} - \m{y}^*\|^2 \right) .
\end{eqnarray*}
Hence, by (\ref{bjc-39-new}) with $\m{w} = \bar{\m{w}}^k$ and
$\gamma_k = \|A\m{x}^k + B\m{y}^k - \m{b}\|^2$, we have
\begin{eqnarray}
&\alpha \left( \|\m{x}^{k+1} - \m{x}^*\|^2 + \|\m{y}^{k+1} - \m{y}^*\|^2
\right) \le \|\m{w}^{k} - \bar{\m{w}}^k\|_{Q_{k}}
 - \|\m{w}^{k+1} - \bar{\m{w}}^k\|_{Q_{k+1}}& \nonumber \\
& + \xi_2 (\gamma_k - \gamma_{k+1}) - \|\m{x}^{k+1} - \m{x}^k\|^2
- \xi_1 \gamma_{k+1} - 2 \zeta^k,& \label{*2}
\end{eqnarray}
where $\zeta^k$ is defined in (\ref{zeta_k}) with $\m{x} = \m{x}^*$,
and $\xi_1, \xi_2 \ge 0$.
Since
$\|\g{\lambda}^{k+1} - \bar{\g{\lambda}}^{k+1}\| \le$
$\|\g{\lambda}^{k+1} - \bar{\g{\lambda}}^{k}\|$, it follows that
\begin{equation}\label{*3}
\|\m{w}^{k+1} - \bar{\m{w}}^{k+1}\|_{Q_{k+1}} \le
\|\m{w}^{k+1} - \bar{\m{w}}^{k}\|_{Q_{k+1}}.
\end{equation}
Add $0.5\xi_1(\gamma_k - \gamma_{k+1})$ to each side of (\ref{*2}) and
combine with (\ref{*3}) to obtain
\[
\alpha \left( \|\m{x}^{k+1} - \m{x}^*\|^2 + \|\m{y}^{k+1} - \m{y}^*\|^2
\right) + \|\m{x}^{k+1} - \m{x}^k\|^2 + .5\xi_1(\gamma_k + \gamma_{k+1})
\le E_k - E_{k+1} - 2 \zeta^k ,
\]
where $E_k := \|\m{w}^k - \bar{\m{w}}^k\|_{Q_k}^2 +
(.5\xi_1 + \xi_2) \gamma_k$ and $\xi_1 > 0$ since $s \in (0, (1+\sqrt{5})/2)$.
The left side of this inequality is bounded from below by a positive
multiple $\bar{c}$ of
\[
d_k =
\|\m{x}^{k+1} - \m{x}^*\|^2 + \|\m{y}^{k+1} - \m{y}^*\|^2
+ \|\m{x}^{k+1} - \m{x}^k\|^2 + \gamma_k + \gamma_{k+1} .
\]
Hence, the inequality can be rearranged to yield
$E_{k+1} \le E_{k} - 2\zeta^k - \bar{c} d_k $.
We will show that $|\mathbb{E}\big[ \zeta^k \big]| \le c(1+\theta)^{-k}$,
in which case, we have
\begin{equation}\label{finalE}
\mathbb{E} \big[ E_{k+1}\big] \le
\mathbb{E} \big[ E_{k}\big] + c (1+\theta)^{-k}
- \bar{c} \mathbb{E} \big[ d_k \big] .
\end{equation}
Note that $\mathbb{E} \big[ d_k \big]$ must approach zero.
Otherwise, there exists $\epsilon > 0$ and an infinite number of indices
$k$ where $\mathbb{E} \big[ d_k \big] \ge \epsilon$.
If this were to hold, then 
$\mathbb{E} \big[ E_{k}\big]$ is eventually negative, which is impossible.

For $\m{x} = \m{x}^*$, (\ref{zeta_k}) gives $\zeta^k =$
\[
\frac{\Gamma_k}{\eta_k}\left[ \left\|\m{x}^*
-\breve{\m{x}}^{k}\right\|_{\C{H}}^2-\left\|\m{x}^*
-\breve{\m{x}}^{k+1}\right\|_{\C{H}}^2\right]
+\Gamma_k \sum\limits_{t=1}^{M_k}t\langle \DELTA_t,
\breve{\m{x}}_{t}-\m{x}^*\rangle
+\frac{\Gamma_k \eta_k }{4(1-\eta_k\nu)}
\sum\limits_{t=1}^{T}t^2\left\|\DELTA_t\right\|_{\C{H}^{-1}}^2,
\]
where $\Gamma_k = 2/[M_k(M_k+1)]$.
By (L4c), for the given choice of $M_k$ and $\eta_k$, the first bracketed
term in $\zeta^k$ is bounded by $c(1+\theta)^{-k}$.
Also, by (L4b), we have $M_k \eta_k =$ $(1+\theta)^{-k}$.
Taking the expectation of $\zeta^k$ and utilizing the estimates
obtained in Lemma~\ref{Sec3-theore1} for the last two terms
in $\zeta^k$, we obtain a bound
of the form $\left| \mathbb{E} \big[ \zeta^k\big] \right| \le c(1+\theta)^{-k}$.

To complete the proof, we will show that
\begin{equation}\label{complete}
\mathbb{E}\big[ E_{k+1} \big] \le
c (\mathbb{E} \big[ d_k \big] +(1+\theta)^{-k}).
\end{equation}
This is combined with the bound (\ref{finalE}) to obtain for some $r > 0$,
\[
\mathbb{E}\big[ E_{k+1} \big] \le \frac{1}{1+r}
\mathbb{E}\big[ E_{k} \big] + c(1+\theta)^{-k}.
\]
Consequently, $\mathbb{E}\big[ E_{k} \big]$ converges to zero at linear
convergence rate $\tau$ where
\[
1 > \tau > \max\{1/(1+r), 1/(1+\theta) \}.
\]

To establish (\ref{complete}), first define $e_1$ and $e_2$ by
\[
e_1 (\m{x}, \g{\lambda}) =
\|\nabla f(\m{x}) - A\tr \g{\lambda}\|
\quad \mbox{and} \quad
e_2 (\m{y}, \g{\lambda}) =
\|\nabla g(\m{y}) - B\tr \g{\lambda}\| .
\]
Also, define $\bar{\m{x}}^k = \arg \min \{\phi_k (\m{x}): \m{x} \in \C{X}\}$,
where $\phi_k$ is defined in (\ref{phi-psi-def}).
Since $\bar{\m{x}}^k$ minimizes $\phi_k$ over $\C{X} = \mathbb{R}^{n_1}$,
$\nabla \phi_k(\bar{\m{x}}^k) = \m{0}$, which implies that
\[
\nabla f(\bar{\m{x}}^k) -
A \tr \g{\lambda}^k + \beta A\tr
(A\m{x}^k + B\m{y}^k - \m{b}) + \C{M}_k(\bar{\m{x}}^k - \m{x}^k) = \m{0}.
\]
Utilize this equality in the definition of $e_1$ and exploit
the Lipschitz continuity of $\nabla f$ to obtain
\begin{equation}\label{e1xk+1}
e_1 (\m{x}^{k+1}, \g{\lambda}^{k}) \le
c \left( \sqrt{\gamma_k} + \|\bar{\m{x}}^k - \m{x}^k\|
+ \|\bar{\m{x}}^k - \m{x}^{k+1}\| \right) ,
\end{equation}
Inserting $\m{x} = \bar{\m{x}}^k$ and $T = M_k$ in (\ref{1-sec1-13}) gives
\begin{eqnarray}
& \left\|{\m{x}}^{k+1} -\bar{\m{x}}^k\right\|_{\C{M}_k}^2
\le \frac{2\Gamma_k}{\eta_k}\left[
\left\|\bar{\m{x}}^k -\breve{\m{x}}^{k}\right\|_{\C{H}}^2-\left\|\bar{\m{x}}^k
-\breve{\m{x}}^{k+1}\right\|_{\C{H}}^2\right] & \nonumber \\
& +2\Gamma_k \sum\limits_{t=1}^{M_k}t\langle \DELTA_t,
\breve{\m{x}}_{t}-\bar{\m{x}}^k\rangle
+\frac{2\Gamma_k \eta_k }{4(1-\eta_k\nu)}
\sum\limits_{t=1}^{T}t^2\left\|\DELTA_t\right\|_{\C{H}^{-1}}^2,&
\label{bill81}
\end{eqnarray}
where $\Gamma_k = 2/[M_k(M_k+1)]$.
The right side of (\ref{bill81}) was the same expression $\zeta^k$
that was analyzed previously, except that $\m{x}^*$ is now replaced by
$\bar{\m{x}}^k$.
By the previous analysis, the last two terms on the right side of
(\ref{bill81}) are bounded by $c(1+\theta)^{-k}$ in expectation.
In the first term, observe that
\[
\left\|\bar{\m{x}}^k -\breve{\m{x}}^{k}\right\|_{\C{H}}^2 \le
2 \left( \left\|\bar{\m{x}}^k\right\|_{\C{H}}^2 +
\left\|\breve{\m{x}}^k\right\|_{\C{H}}^2 \right) .
\]
Again, by the previous analysis,
$(\Gamma_k/\eta_k)\|\breve{\m{x}}^k\|_{\C{H}}^2 \le c(1+\theta)^{-k}$
due to (L4c).
We will show that
$\mathbb{E}\big[\|\bar{\m{x}}^k\|_{\C{H}}^2\big]$ is uniformly bounded,
which implies that
$(\Gamma_k/\eta_k)\mathbb{E}\big[\|\bar{\m{x}}^k\|_{\C{H}}^2\big] \le$
$c(1+\theta)^{-k}$.
In this case, (L3) and (\ref{bill81}) yield
\[
c_1 \mathbb{E}\left[ \left\|\m{x}^{k+1} - \bar{\m{x}}^k\right\|^2 \right] \le
\mathbb{E}\left[ \left\|\m{x}^{k+1} - \bar{\m{x}}^k\right\|_{\C{M}_k}^2 \right]
\le c (1+\theta)^{-k}.
\]
This is combined with (\ref{e1xk+1}) to obtain
\begin{eqnarray}
\mathbb{E} \big[ e_1 (\m{x}^{k+1}, \g{\lambda}^{k})^2 \big] &\le&
c \left( \mathbb{E} \big[ \gamma_k 
+ \|\bar{\m{x}}^k - \m{x}^k\|^2 \big]
+ (1+\theta)^{-k}  \right)
\label{e1xk} \\
&\le&
c \left( \mathbb{E} \big[ \gamma_k 
+ \|\m{x}^{k+1} - \m{x}^k\|^2 \big]
+ (1+\theta)^{-k}  \right) . \nonumber
\end{eqnarray}

To obtain a bound for $\mathbb{E}\big[\|\bar{\m{x}}^k\|_{\C{H}}^2\big]$,
we utilize strong convexity (L1) along with any $\m{x} \in \mathbb{R}^{n_1}$
to obtain
\[
0 \ge \phi_k(\bar{\m{x}}^k) - \phi_k(\m{x}) \ge
\nabla \phi_k(\m{x})(\bar{\m{x}}^k - \m{x}) + \frac{\alpha}{2}
\|\bar{\m{x}}^k - \m{x}\|^2 .
\]
Hence, by the Schwarz and triangle inequalities, we have
\[
\|\bar{\m{x}}^k\| - \|\m{x}\| \le
\|\bar{\m{x}}^k - \m{x}\| \le \frac{2}{\alpha} \|\nabla \phi_k(\m{x})\| =
\frac{2}{\alpha} \|\nabla f(\m{x}) + \m{h}^k + \C{M}_k (\m{x} - \m{x}^k)\| .
\]
For $\m{x} = \m{x}_f^*$, the minimizer of $f$, we have
$\nabla f(\m{x}_f^*) = \m{0}$ and
\[
\|\bar{\m{x}}^k\|^2 \le
c \left( \|\m{h}^k\|^2 + \|\m{x}^k\|^2 + \|\m{x}_f^*\|^2 \right) .
\]
Since $\|\m{h}^k\|^2$ and $\|\m{x}^k\|^2$
are uniformly bounded in expectation
by Proposition~\ref{P1}, it follows that
$\|\bar{\m{x}}^k\|^2$ is uniformly bounded in expectation.

Similar to the treatment of $\bar{\m{x}}^k$,
the optimality condition for $\m{y}^{k+1}$,
the solution of the $\m{y}$-subproblem in AS-ADMM, is
\[
\nabla g(\m{y}^{k+1}) - B \tr \g{\lambda}^k
+ \beta B\tr (A\m{x}^{k+1} + B\m{y}^{k+1} -\m{b}) = \m{0}.
\]
This identity is rearranged to give
\begin{equation} \label{e2yk}
e_2(\m{y}^{k+1}, \g{\lambda}^k) \le c \sqrt{\gamma_{k+1}}, \quad
\mbox{which yields} \quad
\mathbb{E}\big[ e_2 (\m{y}^{k+1}, \g{\lambda}^k)^2 \big]
\le c \mathbb{E}\big[ \gamma_{k+1} \big] .
\end{equation}

The set of multipliers $\g{\Lambda}^*$ are those $\g{\lambda} \in \mathbb{R}^n$
that satisfy both of the equations
$A\tr \g{\lambda} = \nabla f(\m{x}^*)$ and
$B\tr \g{\lambda} = \nabla g(\m{y}^*)$.
Hence, $\g{\Lambda}^*$ is a particular solution plus any vector in the
null space $\C{N}$ of the matrix $[ A \; \; B ]\tr$.
The projection $\bar{\g{\lambda}}$ on $\g{\Lambda}^*$ has the property that
$\g{\lambda} - \bar{\g{\lambda}}$ is orthogonal to $\C{N}$, which implies
the existence of a constant $c$ such that
\[
\|\g{\lambda} - \bar{\g{\lambda}}\|^2
\le c \|[A\;\; B]\tr(\g{\lambda} - \bar{\g{\lambda}})\|^2  =
c \left( \|A\tr (\g{\lambda} - \bar{\g{\lambda}})\|^2 +
\|B\tr (\g{\lambda} - \bar{\g{\lambda}})\|^2 \right) .
\]
Since $A\tr \bar{\g{\lambda}} = \nabla f(\m{x}^*)$ and
$B\tr \bar{\g{\lambda}} = \nabla g(\m{y}^*)$,
this bound can be rewritten
\begin{equation}\label{lambda}
\|\g{\lambda} - \bar{\g{\lambda}}\|^2 \le
c \left( e_1 (\m{x}^*, \g{\lambda})^2 + 
e_2 (\m{y}^*, \g{\lambda})^2 \right) .
\end{equation}

The following inequality is deduced from the triangle inequality,
and the Lipschitz assumption for the gradient of $f$:
\begin{eqnarray*}
e_1(\m{x}^*,\g{\lambda})^2 &\le&
2e_1(\m{x}^{k+1},\g{\lambda}^k)^2
+ 2\left( e_1(\m{x}^*, \g{\lambda})
- e_1(\m{x}^{k+1}, \g{\lambda}^k) \right)^2\\
&\le&
c \left( e_1(\m{x}^{k+1},\g{\lambda}^k)^2
+ \|\m{x}^{k+1} - \m{x}^*\|^2
+ \|\g{\lambda}^k - \g{\lambda}\|^2 \right) .
\end{eqnarray*}
An analogous inequality holds for $e_2$.
Hence, by (\ref{lambda}),
\begin{equation}\label{lambdabound}
\|\g{\lambda} - \bar{\g{\lambda}}\|^2 \le c \delta_{k+1},
\end{equation}
where
\[
\delta_{k+1} =
e_1(\m{x}^{k+1},\g{\lambda}^k)^2 +
e_2(\m{y}^{k+1},\g{\lambda}^k)^2 +
\|\m{x}^{k+1} - \m{x}^*\|^2 +
\|\m{y}^{k+1} - \m{x}^*\|^2 +
\|\g{\lambda}^{k} - \g{\lambda}\|^2 .
\]

Now insert $\g{\lambda} = \g{\lambda}^{k+1}$ in (\ref{lambdabound}),
take expectation, and utilize the bounds (\ref{e1xk}) and (\ref{e2yk})
to obtain
\[
\mathbb{E} \big[ \|\g{\lambda}^{k+1} - \bar{\g{\lambda}}^{k+1}\|^2 \big] \le
c \left( \mathbb{E}\big[ d_{k}\big] + (1 + \theta)^{-k} \right).
\]
Since
\[
\|\m{w}^{k+1} - \bar{\m{w}}^{k+1}\|_{Q_{k+1}}^2 =
\|\m{x}^{k+1} -\m{x}^*\|^2_{\C{D}_{k+1}} + \beta\|B(\m{y}^{k+1} - \m{y}^*)\|^2
+ \|\g{\lambda}^{k+1} - \bar{\g{\lambda}}^{k+1}\|^2/(s\beta) \\
\]
and
\[
E_{k+1} = \|\m{w}^{k+1} - \bar{\m{w}}^{k+1}\|_{Q_{k+1}}^2 
+ 0.5(\xi_1 + \xi_2) \gamma_{k+1},
\]
it follows that
$\mathbb{E} \big[ E_{k+1} \big] \le$
$c ( \mathbb{E}\big[ d_{k}\big] + (1 + \theta)^{-k})$,
which completes the proof of (\ref{complete}).
\end{proof}
\bibliographystyle{siam}

\end{document}